\newtheorem{theorem}{Theorem}[section]
\newtheorem{remark}{Remark}[section]
\newtheorem{lemma}{Lemma}[section]
\newtheorem{claim}{claim}
\title{Liouville theorem for elliptic equations involving the sum of the function and its gradient in $\mathbb R^n$}
\author{Xi-Nan Ma}
\address{Department of Mathematics \\University of Science and Technology of China, Hefei, China}
\email{xinan@ustc.edu.cn}
\author{Wangzhe Wu}
\address{Institute of Mathematics \\Academy of Mathematics and Systems Science, Chinese
Academy of Sciences, Beijing, 100190, China}
\email{wuwz18@mail.ustc.edu.cn}
\author{Qiqi Zhang}
\address{College of Mathematics and Information Science \\ Nanchang Hangkong University, Nanchang, China}
\email{qiqizmath@126.com}
\begin{document}
	
	\pagestyle{fancy}

\fancyhead{}
\fancyhead[CO]{Liouville theorem for elliptic equations}

\fancyhead[CE]{\leftmark}

	\begin{abstract}
		We prove Liouville theorem for the equation $\Delta v + N v^p + M |\nabla v|^{q}=  0$ in $\mathbb R^n$, with $M, N > 0, q = \frac{2p}{p + 1}$ in the critical and subcritical case. The proof is based on a differential identity and Young inequality.
	\end{abstract}

	\maketitle

	\section{Introduction}
\noindent In this paper, we are concerned with a Liouville type theorem for the global solutions in the critical and subcritical cases of the following equations in $\mathbb{R}^n$:
	\begin{equation}\label{mainresult}
		\Delta v + N v^p + M |\nabla v|^{q}=0,
	\end{equation}
where $q = \frac{2p}{p + 1}$,  $p$ and $q$ are exponents larger than 1 and $M,  N > 0$. This kind of equations are widely concerned and studied depending on the different value of $M, N$.
	
For the case $M = 0, N = 1$, the work concerning the symmetry of solutions of second order elliptic equations on an unbounded domain was first done by Gidas, Ni and Nirenberg \cite{MR634248}, and then generalized to infinite cylinders by Berestycki and Nirenberg \cite{MR1039342}. Gidas, Ni and Nirenberg \cite{MR634248}  proved that for $p = \frac{n + 2}{n - 2}$, all the positive solutions with reasonable behavior at infinity, namely $v = O(|x|^{2 - n})$, are radially symmetric about some point and have the form
$$
u = \Big[\frac{\lambda \sqrt{n(n - 2)}}{\lambda^2 + |x - x_0|^2} \Big]^{\frac{n - 2}{2}}.
$$
This uniqueness result, as was pointed out by R. Schoen, is in fact equivalent to the geometric result due to Obata \cite{MR303464}. In the case that $1 < p < \frac{n + 2}{n - 2}$, Gidas and Spruck \cite{MR615628} showed that the only nonnegative solution is trivial.
The related equation
$$
\Delta v+v^p=0,
$$
is usually called the Lane-Emden equation. It is well-known that radial ground states exist if and only if the exponent $p$ either is critical (i.e.=$(n+2)/(n-2)$) or is supercritical ($>(n+2)/(n-2)$). A fairly straightforward modern proof of Fowler's result can be given using standard shooting methods together with generalized Poho$\check{z}$aev identities.

  Due to Caffarelli, Gidas and Spruck \cite{MR982351}, the treatment of the critical case $p = \frac{n + 2}{n - 2}$ was made possible thanks to a completely new approach based upon a combination of moving plane analysis and geometric measure theory. 
Later, Chen-Li \cite{MR1121147} provided a simpler proof than \cite{MR982351}. Recently, Catino and Monticelli \cite{catino} get some classification results on Riemannian manifolds.  For more application of the differential identity and integration by parts, one can refer to \cite{MR4753063, MR4519639}.

For the case $M = 1, N = 0$, it is evident that equation \eqref{mainresult} becomes the Hamilton-Jacobi equation
$$
\Delta v  +  |\nabla v|^{q}=0,
$$
which is proved by P.L. Lions \cite{MR833413} that any $C^2$ solution in $\mathbb R^n$ has to be a constant for $q > 1$.  Other work on this kind of equation can be seen in \cite{MR3261111}.
Peculiarly, it can be also seen as another special case of the below equation which can be expressed as the product of $v^{p}$ and $|\nabla v|^q$:
	\begin{equation}\label{Veronequ1}
		\begin{aligned}
			-\Delta v = v^{p}|\nabla v|^q, ~\text{in}\quad  \mathbb R^n.
		\end{aligned}
	\end{equation}
Such equation was studied  by Bidaut-V\'eron,  Garc\'ia-Huidobro,
and  V\'eron\cite{MR3959864}. They obtained Liouville type theorem with the conditions on $p, q$. They also studied local and global properties of positive solutions in the range $p+q > 1$, $p \ge 0$ and $0 \le q < 2$. Their main results dealt with the subcritical range and proved a priori estimates of positive solutions of \eqref{Veronequ1} in a punctured domain and existence of ground states in $\mathbb{R}^n$, based on two
approaches for obtaining their results: the direct Bernstein method by Lions \cite{MR588033} and the integral Bernstein method popularized by Gidas and Spruck in \cite{MR615628} respectively. Both methods are based upon differentiating the equation. For the $p$-Laplacian case, one can refer to \cite{MR4234084}. And the group of Youde Wang generalized the results from $\mathbb R^n$ to Riemannian manifolds \cite{ he2024nashmoseriterationapproachlogarithmic, he2024optimalliouvilletheoremslaneemden, MR4703505, MR4559367}.

In \cite{MR1000727} and \cite{MR1188490} part of the study deals with the case $q \neq \frac{2p}{p + 1}$. In the critical case,  not only the sign of $M$ but also its value plays a fundamental role, with a delicate interaction with the exponent $p$. Other work on this kind of equation can be seen in \cite{MR3261111}.

Actually, there have already been some results of the same type equation as (\ref{mainresult}) in \cite{MR4150912} and \cite{MR4043662} where it is concerned with local and global properties of positive solutions with $N=1$ in a domain $\Omega\subset \mathbb R^n$, in the range $\min\{p, q\} > 1$, and $M\in \mathbb{ R}$. Particularly, Bidaut-V\'eron,  Garc\'ia-Huidobro,
and  V\'eron \cite{MR4150912} pointed out that there exists no nontrivial nonnegative solution with $n\ge 1$, $p, q > 1$, $q \neq \frac{2p}{p + 1}$ and some extra conditions on $u$.
In fact, one (see for example \cite{MR4150912}) observes that the equation \eqref{mainresult} is invariant under the scaling transformation $T_k$ with $k > 0$ if and only if $q$ is critical with respect to $p$, i.e. $q = \frac{2p}{p + 1}$ where
	\begin{equation}
		T_k[v] (x) = k^{\frac{2}{p - 1}}v(k x).
	\end{equation}
It follows that by rescaling, we can always assume $N = 1$ for our case. In the critical case, first studies in the case $M < 0$ are due to Chipot and Weissler \cite{MR1000727} for $n = 1$. The case $ n\ge 2$ was left open by Serrin and Zou \cite{MR1188490} who performed a very detailed analysis and the first partial results are due to Fila and Quittner \cite{MR1239247} and Voirol \cite{MR1422294}. Much less is known for $M > 0$. References \cite{MR4150912} and \cite{MR4043662} have done some work and also gave a Liouville type theorem.  We mainly cited some related results from Bidaut-V\'eron,  Garc\'ia-Huidobro,
and  V\'eron \cite{MR4150912}.
 	\begin{theorem}[Theorem C in \cite{MR4150912}]
		Let $n \geq 1$, $p > 1 , q = \frac{2p}{p + 1}$. For any
		\begin{align*}
			M > \left(\frac{p - 1}{p + 1}\right)^{\frac{p - 1}{p + 1}}\left(\frac{n(p + 1)^2}{4p}\right)^{\frac{p}{p + 1}},
		\end{align*}
		there exists no nontrivial nonnegative solution of \eqref{mainresult} in $\mathbb R^n$.
	\end{theorem}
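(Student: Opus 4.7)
My plan is to combine a Bernstein-type pointwise method with Young's inequality to extract the sharp threshold on $M$. By the scaling $v(x) \mapsto k^{2/(p-1)} v(kx)$ noted in the introduction, I reduce to $N = 1$. Let $v \ge 0$ be a classical solution on $\mathbb{R}^n$. Since $\Delta v = -v^p - M|\nabla v|^q \le 0$, the function $v$ is superharmonic, so by the strong maximum principle either $v \equiv 0$ (and we are done) or $v > 0$ everywhere; suppose the latter for contradiction.

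The key auxiliary is $\varphi := |\nabla v|^2 / v^{\beta}$ for a free parameter $\beta > 0$ to be chosen later (the natural scale-invariant choice is $\beta = p+1$). I study the maximum of $\varphi \cdot \eta^{2\kappa}$ on $B_{2R}$, where $\eta$ is a standard cutoff with $\eta \equiv 1$ on $B_R$, supported in $B_{2R}$, with $\kappa$ large. At an interior maximum, the conditions $\nabla(\varphi\eta^{2\kappa}) = 0$ and $\Delta(\varphi\eta^{2\kappa}) \le 0$, combined with the Bochner identity $\Delta|\nabla v|^2 = 2|D^2 v|^2 + 2\,\nabla v \cdot \nabla \Delta v$, the refined Cauchy--Schwarz bound $|D^2 v|^2 \ge (\Delta v - v_{\nu\nu})^2/(n-1) + v_{\nu\nu}^2$ (with $v_{\nu\nu}$ determined at the critical point by $\nabla \varphi = 0$), and direct substitution of $\Delta v = -v^p - M|\nabla v|^q$, yield a pointwise polynomial inequality in the scale-invariant variable $A := |\nabla v| \cdot v^{-(p+1)/2}$ of the form
$$
h_{\beta}(A) \;\le\; \mathcal{E}(R),
$$
where $h_{\beta}(A)$ is an explicit function of $A$ with coefficients depending on $M, n, p, \beta$, and $\mathcal{E}(R) \to 0$ as $R \to \infty$ after a weighted Young absorption of the $|\nabla \eta|$-terms.

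The decisive use of Young's inequality is on the (potentially negative) cross term of $h_\beta$ proportional to $M\beta\, A^{q+2}$. Writing $A^{q+2} = A^q \cdot A^2$ and applying Young with the conjugate pair $(2/q,\,2/(2-q))$ --- dictated exactly by the critical relation $q = 2p/(p+1)$ (equivalently $p/(p+1) = q/2$ and $(2-q)/q = 1/p$) --- this term splits, with a free parameter $\lambda > 0$, into contributions absorbed into $A^{2q}$ (coming from the expansion of the Bochner square) and into $A^4$ (coming from the Bernstein gain term). Optimising over $\lambda$ and $\beta$ reduces the positivity of $h_\beta$ on $[0,\infty)$ to a single algebraic inequality, which one checks is equivalent to
$$
M \;>\; \Bigl(\tfrac{p-1}{p+1}\Bigr)^{(p-1)/(p+1)}\Bigl(\tfrac{n(p+1)^2}{4p}\Bigr)^{p/(p+1)}.
$$
Above the threshold, $h_\beta(A) > 0$ for all $A \ge 0$, so no interior maximum of $\varphi\eta^{2\kappa}$ can exist; letting $R \to \infty$ forces $\varphi \equiv 0$, hence $|\nabla v| \equiv 0$, $v$ is a positive constant, and the equation gives $v^p = 0$, the desired contradiction.

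The main obstacle will be the algebraic optimisation in $\beta$ and $\lambda$: matching the equality case of Young's inequality to the stated sharp exponents requires repeated use of the identities $q/2 = p/(p+1)$ and $(2-q)/q = 1/p$, and the correct balancing of the $A^{2q}$ and $A^4$ contributions. A secondary concern is the control of $\mathcal{E}(R)$: the boundary $|\nabla \eta|^2$-contributions must be absorbed into the positive part of $h_\beta(A)$ without spoiling the sharp constant, which forces $\kappa$ to be chosen large relative to the exponents $(p, q)$ but is otherwise routine.
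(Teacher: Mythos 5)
First, a point of reference: the paper does not prove this statement at all --- it is quoted verbatim as Theorem C of \cite{MR4150912} and used only as background --- so there is no in-paper proof to compare against, and your sketch has to stand on its own. Your chosen strategy (the direct Bernstein method of Lions: maximize $|\nabla v|^2 v^{-\beta}\eta^{2\kappa}$, apply Bochner, Cauchy--Schwarz on $|D^2v|^2$, substitute the equation, and split the bad terms by Young with the conjugate pair $(2/q,2/(2-q))$ dictated by $q=2p/(p+1)$) is in the right family of techniques and does produce Liouville thresholds of the correct shape $C(p)\,n^{p/(p+1)}$; that much of the plan is sound.

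The genuine gap is that the entire content of the theorem is the \emph{specific} constant, and that is exactly what you defer to ``one checks is equivalent to.'' You never write down $h_\beta$, never perform the optimisation in $\beta$ and $\lambda$, and never verify that the optimum is $\bigl(\tfrac{p-1}{p+1}\bigr)^{(p-1)/(p+1)}\bigl(\tfrac{n(p+1)^2}{4p}\bigr)^{p/(p+1)}$ rather than something worse. This is not a formality: the crude version of your own scheme (take $\beta=0$, use $|D^2v|^2\ge(\Delta v)^2/n$, and split $p\,v^{p-1}|\nabla v|^2\le (p-1)t\,v^p|\nabla v|^q+t^{-(p-1)}|\nabla v|^{2q}$ against the terms $\tfrac{2M}{n}v^p|\nabla v|^q$ and $\tfrac{M^2}{n}|\nabla v|^{2q}$ coming from $(\Delta v)^2/n$) closes the argument only for $M\ge(\tfrac{p-1}{2})^{(p-1)/(p+1)}n^{p/(p+1)}$, which exceeds the stated threshold by a factor approaching $2$ as $p\to\infty$; recovering the sharp constant requires exploiting the $v^{2p}/n$ term, the refined Cauchy--Schwarz, and a nonzero $\beta$ in a way you have not carried out. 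Two further concrete problems: (i) once $\beta>0$, normalising the Bochner inequality by the homogeneous weight $v^{2p}$ to get an inequality in $A$ alone makes the cutoff errors carry negative powers of $v$, and no positive lower bound for $v$ on $B_{2R}$ is available, so the claim that $\mathcal E(R)\to0$ is ``otherwise routine'' is unjustified (with $\beta=0$ one instead keeps the unnormalised inequality $\tfrac12\Delta P+b\cdot\nabla P\ge\delta P^q$ and runs a Keller--Osserman cutoff, but then one is stuck with the worse constant); (ii) the theorem is asserted for $n\ge1$, while your refined Cauchy--Schwarz with denominator $n-1$ degenerates at $n=1$. Finally, the conclusion ``no interior maximum can exist'' is misstated: the maximum of $\varphi\eta^{2\kappa}$ on $\overline{B_{2R}}$ always exists; what the argument yields is a quantitative bound on $\varphi$ at that point which tends to $0$ with $R$.
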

	\begin{theorem}[Theorem D in \cite{MR4150912}]
		Let $n \geq 2$, $1 < p < \frac{n + 3}{n - 1}, q = \frac{2p}{p + 1}$. For any $M > 0$,
		there exists no nontrivial nonnegative solution of \eqref{mainresult} in $\mathbb R^n$.
	\end{theorem}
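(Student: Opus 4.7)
My plan is to use an integral Bernstein method in the spirit of Gidas–Spruck, adapted to the critical gradient exponent $q=\tfrac{2p}{p+1}$. After using the scaling $T_{k}$ to normalize $N=1$, I assume for contradiction that $v$ is a nontrivial nonnegative $C^{2}$ solution; the strong maximum principle then gives $v>0$ on $\mathbb{R}^{n}$. The starting point is a pointwise differential identity: I would apply Bochner's formula
$$\tfrac{1}{2}\Delta|\nabla v|^{2}=|D^{2}v|^{2}+\langle\nabla v,\nabla\Delta v\rangle,$$
substitute $\Delta v=-v^{p}-M|\nabla v|^{q}$ into the second term, and use $|D^{2}v|^{2}\ge\tfrac{1}{n}(\Delta v)^{2}$ (or the sharper refined Kato inequality along level sets) to obtain an elliptic inequality for the auxiliary quantity $z=v^{-\alpha}|\nabla v|^{2}$, with $\alpha>0$ to be chosen.

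The gradient nonlinearity is then handled by Young's inequality at the critical exponent. For every $\lambda>0$, writing $|\nabla v|^{q}=(\lambda v^{p/(p+1)})\cdot(\lambda^{-1}v^{-p/(p+1)}|\nabla v|^{q})$ and using the conjugate pair $(p+1,\tfrac{p+1}{p})$ yields
$$M|\nabla v|^{q}\le\frac{M\lambda^{p+1}}{p+1}\,v^{p}+\frac{Mp\,\lambda^{-(p+1)/p}}{p+1}\,v^{-1}|\nabla v|^{2}.$$
The two terms on the right have exactly the homogeneities already present in the Bochner-based inequality, so the $v^{p}$-piece can be absorbed into the natural $v^{p}$ term and the $v^{-1}|\nabla v|^{2}$-piece into the $|\nabla v|^{2}$-term generated by Bochner. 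Tuning $\lambda$ (depending on $M$) makes the absorption succeed for every $M>0$, which is precisely why Young's inequality at this exponent is the natural ingredient.

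With the pointwise inequality secured, I would multiply by $v^{\sigma}\phi^{2m}$, where $\phi$ is a standard cutoff satisfying $\phi\equiv 1$ on $B_{R}$, $\phi\equiv 0$ outside $B_{2R}$, and $|\nabla\phi|\le C/R$, and integrate by parts. Further applications of Young's inequality recycle the mixed terms carrying $\nabla\phi\cdot\nabla v$, yielding an estimate of the schematic form
$$\int_{B_{R}} v^{p+\sigma}\le\frac{C}{R^{\tau}}\int_{B_{2R}\setminus B_{R}} v^{\sigma'},$$
with $\sigma,\sigma',\tau$ explicit functions of $n$, $p$, $\alpha$. A Hölder step followed by letting $R\to\infty$ then forces $v\equiv 0$.

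The main obstacle will be verifying that a consistent choice of parameters $(\alpha,\sigma,m)$ exists throughout the range $1<p<\tfrac{n+3}{n-1}$. Each use of Young's inequality and of the $\tfrac{1}{n}$-factor from Bochner imposes a linear constraint, and the simultaneous solvability of these constraints, together with the requirement that the coefficient in front of the leading $v^{p+\sigma}$ term remains strictly positive, is precisely what pins down the threshold $\tfrac{n+3}{n-1}$. Carrying out this bookkeeping uniformly in $M>0$ is the delicate part; once it is done, the Liouville conclusion drops out routinely.
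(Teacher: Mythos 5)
First, note that this statement (Theorem~D of Bidaut-V\'eron, Garc\'ia-Huidobro and V\'eron) is quoted in the paper as background from \cite{MR4150912}; the paper gives no proof of it, so there is nothing internal to compare your argument against. Judged on its own, your sketch has a genuine gap at its central step: the claim that Young's inequality with a tunable $\lambda$ absorbs the gradient term uniformly in $M>0$. In your inequality
$M|\nabla v|^{q}\le\frac{M\lambda^{p+1}}{p+1}v^{p}+\frac{Mp\,\lambda^{-(p+1)/p}}{p+1}v^{-1}|\nabla v|^{2}$,
\emph{both} terms on the right retain the factor $M$. To absorb the first into the fixed $v^{p}$ term you need $M\lambda^{p+1}$ bounded, i.e.\ $\lambda\lesssim M^{-1/(p+1)}$, which forces the coefficient of $v^{-1}|\nabla v|^{2}$ to be of size $M\cdot\lambda^{-(p+1)/p}\gtrsim M^{1+1/p}\to\infty$. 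But in a Gidas--Spruck computation the coefficient of the resulting $v^{\alpha-2}|\nabla v|^{4}$ block is a \emph{fixed} quantity determined by $n,p,\alpha$ (it is what produces the exponent restriction via a discriminant condition), so an arbitrarily large multiple of $v^{-1}|\nabla v|^{2}$ cannot be absorbed. The same obstruction appears for the term $M^{2}v^{\alpha}|\nabla v|^{2q}$: interpolating it between $v^{\alpha-2}|\nabla v|^{4}$ and $v^{\alpha+2p}$ by Young leaves a coefficient $\sim M^{2(p+1)}$ on one side or the other. This is exactly why every known result of this type either restricts $M$ (small or large) or requires a finer mechanism; indeed the present paper needs all of Sections~3--6, with auxiliary multipliers $T,U,P$ of the equation and a careful matching of an upper threshold $M_{1}$ and a lower threshold $M_{2}$, to get a statement valid for all $M>0$.

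The missing idea is that for $M>0$ the cross terms $Mv^{\alpha+p}|\nabla v|^{q}$ and $Mv^{\alpha-1}|\nabla v|^{q+2}$ produced by the differentiation of the equation enter with \emph{favourable} signs and must be used as resources to dominate the bad $M$-dependent terms, rather than discarding them and trying to beat $M$ by a single Young step. (Separately, the threshold $\frac{n+3}{n-1}$ you invoke does not come out of the integral Bernstein bookkeeping you describe --- that route naturally produces $\frac{n+2}{n-2}$-type or $\frac{n(n+2)}{(n-1)^{2}}$-type exponents; in \cite{MR4150912} the range $p<\frac{n+3}{n-1}$ arises from a direct, pointwise Bernstein argument.) As written, your plan would at best yield nonexistence for $M$ in a bounded range, not for every $M>0$.
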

 	\begin{theorem}[Theorem E in \cite{MR4150912}] \label{Veron0}
		Let $n \geq 3$, $1 < p < \frac{n + 2}{n - 2} , q = \frac{2p}{p + 1}$. Then there exists $\epsilon_0 > 0$ depending on $n$ and $p$ such that for any $|M| \leq \epsilon_0$,
		there exists no nontrivial nonnegative solution of \eqref{mainresult} in $\mathbb R^n$.
	\end{theorem}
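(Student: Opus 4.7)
The plan is to extend the integral Bernstein method of Gidas and Spruck \cite{MR615628}, which handles the unperturbed case $M = 0$ (the Lane--Emden equation), to the equation \eqref{mainresult}. The critical exponent $q = 2p/(p+1)$ is precisely the one for which Young's inequality splits the perturbation term $M|\nabla v|^q$ into pieces matching the structure of the other two terms, with an $|M|$-dependent loss; this makes a perturbative argument natural.

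Concretely, I would multiply \eqref{mainresult} (with $N = 1$ after rescaling) by $v^s\phi_R^{2m}$ for a parameter $s \geq 0$ to be chosen and a standard radial cutoff $\phi_R$ equal to $1$ on $B_R$ and supported in $B_{2R}$. Integration by parts yields
\begin{align*}
s\int v^{s-1}|\nabla v|^2 \phi_R^{2m} + 2m\int v^s \phi_R^{2m-1}\nabla\phi_R\cdot\nabla v = \int v^{p+s}\phi_R^{2m} + M\int v^s |\nabla v|^q \phi_R^{2m}.
\end{align*}
Applying Young's inequality to the splitting $v^s|\nabla v|^q = v^{s-(s-1)p/(p+1)} \cdot (v^{s-1}|\nabla v|^2)^{p/(p+1)}$ with conjugate exponents $(p+1)/p$ and $p+1$, and using $q = 2p/(p+1)$, gives the critical bound
\begin{align*}
v^s|\nabla v|^q \leq \epsilon\, v^{s-1}|\nabla v|^2 + C(\epsilon)\, v^{s+p},
\end{align*}
so the $M$-term is absorbed partly into the $v^{s-1}|\nabla v|^2$ integral on the left and partly into the $v^{s+p}$ integral on the right, each with coefficient of order $|M|$.

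A second differential identity would then be derived, either by multiplying the equation by $|\nabla v|^2 \phi_R^{2m}$ or by computing $\Delta v^{(s+1)/2}$ and testing appropriately (the standard Gidas--Spruck companion equation). Combining with the first identity via H\"older's inequality produces, after the Young step, an estimate of the form $\int v^{p+s}\phi_R^{2m} \leq C(n,p,s,M)\, R^{-\sigma}$ for some $\sigma > 0$, provided the combined coefficient of $\int v^{p+s}\phi_R^{2m}$ remains strictly positive. Letting $R \to \infty$ then forces $\int_{\mathbb{R}^n} v^{p+s} = 0$, so $v \equiv 0$.

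The main obstacle is the coefficient bookkeeping in this last step. For $p < (n+2)/(n-2)$, Gidas--Spruck identify an explicit range of $s$ on which the dominant coefficient is strictly positive, and the task here is to verify that the corrections of order $|M|$ introduced through the Young step preserve strict positivity for all $|M| \leq \epsilon_0(n,p)$. This is essentially a continuity-in-$M$ argument, but extracting the precise admissible $\epsilon_0$ requires a careful balance between the choice of $s$, of $\epsilon$ in the Young inequality, and the explicit Sobolev-type constants appearing in the underlying Gidas--Spruck analysis.
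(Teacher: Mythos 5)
First, a point of reference: the statement you were asked to prove is Theorem E of \cite{MR4150912}, which this paper quotes as background and does not prove. The paper's own theorems (Theorem \ref{Theorem1}, Theorem \ref{Theorem2}, Theorem \ref{main result1}) are precisely a strengthening of it for $M>0$ (no smallness of $M$, and the critical exponent included), so there is no in-paper proof of this exact statement to compare against line by line; the comparison below is with the machinery the paper uses for its stronger results.

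Your outline is consistent with the method that \cite{MR4150912} themselves attribute to the integral Bernstein approach of Gidas--Spruck \cite{MR615628}, and the two structural observations are correct: the exponent $q=\frac{2p}{p+1}$ is exactly the one for which $v^{s}|\nabla v|^{q}$ Young-splits into $v^{s-1}|\nabla v|^{2}$ and $v^{s+p}$ with an $O(|M|)$ loss, and since the Gidas--Spruck coefficient inequalities are strict for $1<p<\frac{n+2}{n-2}$, an $O(|M|)$ perturbation preserves them, which is why an $\epsilon_{0}(n,p)$ exists. However, as written this is a plan rather than a proof: the single identity you display cannot close on its own (the term $\int v^{p+s}\phi_R^{2m}$ appears with the wrong sign), and the decisive ingredient --- the second, Hessian-level identity and the verification that its coefficients stay strictly positive uniformly in $|M|\le\epsilon_0$ --- is exactly the step you defer. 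That is where the restriction $p<\frac{n+2}{n-2}$ enters and where essentially all of the work lies. For contrast, the present paper carries out that bookkeeping explicitly and in a sharper form: it multiplies the equation by $v^{\alpha}|\nabla v|^{\gamma}\Delta v$, introduces the trace-adjusted Hessians $G_{ij},E_{ij}$ of \eqref{0.1}--\eqref{0.2} with the additional free parameters $S,Q$ in place of the classical choice $S=Q=\frac1n$, assembles the identity \eqref{TheFinalEqua1}, and then checks positivity of the resulting coefficients $\mathbb K_{1},\dots,\mathbb K_{6}$ via Young's inequality and cutoffs; the extra parameters are what allow the authors to remove the smallness assumption on $M>0$ altogether and to reach $p=\frac{n+2}{n-2}$. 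So your route would, if completed, recover the weaker perturbative statement, but the completion --- deriving the companion identity and doing the $|M|$-uniform coefficient analysis --- is the actual content of the proof and is not yet present.
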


By observation, we found that the results in \cite{MR4150912} have not covered all the cases that $1 < p \leq \frac{n + 2}{n - 2}$ for $q = \frac{2p}{p + 1}, M > 0$.
Motivated by the results above, we aim to complete the corresponding results of the range of $p$ with $1 < p \le \frac{n + 2}{n - 2}, M > 0$, based on a
differential identity and Young inequality. 
Our main results in this framework are the following and we always assume that $N = 1$:
	\begin{theorem}\label{main result1}
		Let $n \geq 3$, $1 < p \leq \frac{n + 2}{n - 2}$, $q = \frac{2p}{p + 1}$, then all the nonnegative solutions of \eqref{mainresult} are $v \equiv 0$ for any $M > 0$.
	\end{theorem}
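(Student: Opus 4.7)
The plan is to prove Theorem \ref{main result1} by an integral Bernstein argument that exploits the critical balance $q=2p/(p+1)$ through a sharp Young inequality.

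First, for a standard radial cutoff $\eta$ with $\eta\equiv 1$ on $B_R$, $\operatorname{supp}\eta\subset B_{2R}$, $|\nabla\eta|\le C/R$, and for parameters $\alpha\ge 0$, $m\ge 1$ to be chosen, I multiply \eqref{mainresult} by $v^{\alpha}\eta^{2m}$ and integrate by parts once to obtain the differential identity
\begin{equation*}
\int v^{\alpha+p}\eta^{2m}+M\int v^{\alpha}|\nabla v|^{q}\eta^{2m}=\alpha\int v^{\alpha-1}|\nabla v|^{2}\eta^{2m}+2m\int v^{\alpha}\eta^{2m-1}\nabla v\cdot\nabla\eta.
\end{equation*}
All three bulk integrands share the common scaling exponent $2(\alpha+p)/(p-1)$ under $T_k$, and the two on the left are non-negative.

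The decisive step is the sharp Young inequality that becomes available at $q=2p/(p+1)$. Factoring $v^{\alpha}|\nabla v|^{q}=(v^{\alpha-1}|\nabla v|^{2})^{p/(p+1)}\cdot v^{(\alpha+p)/(p+1)}$ and applying Young with conjugate exponents $(p+1)/p$ and $p+1$ yields, for every $\delta>0$,
\begin{equation*}
Mv^{\alpha}|\nabla v|^{q}\le \delta\, v^{\alpha-1}|\nabla v|^{2}+C(p,M,\delta)\, v^{\alpha+p}.
\end{equation*}
This is the only place where the critical relation $q=2p/(p+1)$ is used; it converts the mixed gradient--polynomial nonlinearity into the two natural homogeneities already present in the identity. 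After controlling the cutoff remainder by a standard Young inequality and absorbing the $|\nabla v|^{2}$ content by taking $\delta$ sufficiently small, I arrive at
\begin{equation*}
\int v^{\alpha-1}|\nabla v|^{2}\eta^{2m}+\int v^{\alpha+p}\eta^{2m}\le \frac{C}{R^{2}}\int_{B_{2R}\setminus B_{R}}v^{\alpha+1}.
\end{equation*}

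To extract decay, I apply the Sobolev embedding $\dot H^{1}(\mathbb R^{n})\hookrightarrow L^{2^{*}}(\mathbb R^{n})$ (valid for $n\ge 3$) to $v^{(\alpha+1)/2}\eta^{m}$, choosing $\alpha=[(n-2)p-n]/2$ when $p>n/(n-2)$ so that $(\alpha+1)\cdot 2^{*}/2=\alpha+p$; a direct Bernstein choice $\alpha=0$ handles the smaller range $1<p\le n/(n-2)$. Combined with H\"older's inequality on the right-hand side, this reduces the matter to an inequality of the form $A_R^{2/2^{*}}\le CR^{\sigma(p,n)}A_{2R}^{\theta}$ with $A_R:=\int_{B_R}v^{\alpha+p}$ and $\theta\in(0,1]$. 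An explicit calculation shows $\sigma(p,n)<0$ throughout the subcritical range $1<p<(n+2)/(n-2)$, so iterating with $R_k=2^kR_0$ forces $A_{R_0}=0$ and hence $v\equiv 0$.

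The main obstacle is the critical case $p=(n+2)/(n-2)$, where $\sigma(p,n)=0$ and the iteration only yields a self-improving bound. To close this case, the argument must exploit the strictness of Young's inequality: equality would force $|\nabla v|^{2}\propto v^{p+1}$, which combined with \eqref{mainresult} pins $v$ down to an Aubin--Talenti bubble profile, none of which satisfies \eqref{mainresult} for $M>0$, as a direct substitution shows. Converting this rigidity into a quantitative contradiction using the residual $M\int v^{\alpha}|\nabla v|^{q}$ term discarded by Young, and choosing parameters $\alpha$ and $m$ uniformly across the full range $1<p\le (n+2)/(n-2)$, constitute the most delicate part of the argument.
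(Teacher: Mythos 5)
Your proposal is not a repair of the paper's argument but a different (first--order, direct Bernstein) scheme, and it has a fatal gap at the absorption step. In your identity the gradient term appears on the right with the sign $+\alpha\int v^{\alpha-1}|\nabla v|^{2}\eta^{2m}$, so for $\alpha>0$ it is a \emph{bad} term that must be dominated by the good term $M\int v^{\alpha}|\nabla v|^{q}\eta^{2m}$ sitting on the left. Your Young inequality
$Mv^{\alpha}|\nabla v|^{q}\le \delta\,v^{\alpha-1}|\nabla v|^{2}+C\,v^{\alpha+p}$
points in the wrong direction: substituting it merely discards the good term and leaves $\alpha\int v^{\alpha-1}|\nabla v|^{2}$ uncontrolled. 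The reverse bound $Mv^{\alpha}|\nabla v|^{q}\gtrsim v^{\alpha-1}|\nabla v|^{2}-Cv^{\alpha+p}$ is false pointwise, since $v^{\alpha-1}|\nabla v|^{2}=v^{\alpha}|\nabla v|^{q}\cdot|\nabla v|^{2-q}/v$ and $q<2$. Moreover, even granting your intermediate inequality, the Sobolev-matching choice $\alpha=\tfrac{(n-2)p-n}{2}$ gives $\alpha+p=\tfrac{n(p-1)}{2}$, hence the cutoff power $R^{-2+n(p-1)/(\alpha+p)}=R^{0}$ and $(\alpha+1)/(\alpha+p)=(n-2)/n=2/2^{*}$: the iteration inequality degenerates to $A_R\le C A_{2R}$ for \emph{every} $p$ in this regime, not only the critical one, so your claim that $\sigma(p,n)<0$ throughout $1<p<\tfrac{n+2}{n-2}$ is false. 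The only range your scheme genuinely covers is the Serrin range $p<\tfrac{n}{n-2}$ (the $\alpha=0$ case), and the critical-case rigidity paragraph is a statement of intent rather than a proof.

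The underlying obstruction is structural: a single multiplication by $v^{\alpha}\eta^{2m}$ and one integration by parts cannot reach the Sobolev exponent. The paper's proof instead multiplies the equation by $v^{\alpha}|\nabla v|^{\gamma}\Delta v$ and performs a second-order (Obata/Gidas--Spruck type) integration by parts, producing the nonnegative squares $E_{ij}^{2}$ and $G_{ij}^{2}$ of (modified) traceless Hessians with free parameters $S,Q$; the Liouville conclusion is then reduced to a sign condition \eqref{condition08240930} on explicit coefficients $\mathbb K_{1},\dots,\mathbb K_{6}$, verified separately for $0<M<M_{1}$ (Theorem \ref{Theorem1}, with $\gamma=0$) and $M>M_{2}$ (Theorem \ref{Theorem2}, with $\gamma=n-4$), together with $M_{2}<M_{1}$. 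If you want to pursue your route, you would need to add this second-order differential identity; the first-order energy estimate alone cannot close the range $\tfrac{n}{n-2}\le p\le\tfrac{n+2}{n-2}$.
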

	In order to prove Theorem \ref{main result1}, we need two other conclusions:
	\begin{theorem}\label{Theorem1}
		When $1 < p \leq \frac{n + 2}{n - 2}$, $q = \frac{2p}{p + 1}$, $n \geq 3$, there exists $M_1 > 0$ such that if $0 < M < M_1$, then $v \equiv 0$. Especially, if $3 \leq n \leq 6$, then $M_1 = \infty$.
	\end{theorem}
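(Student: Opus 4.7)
The plan is to adapt the integral Bernstein technique of Gidas--Spruck to accommodate the gradient nonlinearity, with Young's inequality used to reabsorb the $M|\nabla v|^q$ term. The conceptual reason this should work is that the exponent $q=\frac{2p}{p+1}$ is exactly the one for which Young's inequality splits the product $v^\alpha|\nabla v|^q$ into a $v^{\alpha+p}$ piece and a $v^{\alpha-1}|\nabla v|^2$ piece, which are precisely the two principal quantities that arise in the Bernstein argument for $\Delta v+v^p=0$. Thus for $M$ small those Young-generated contributions can be absorbed back into the two principal terms, and the classical Liouville argument survives.

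First, I would fix a smooth cutoff $\eta\in C_c^\infty(B_{2R})$ with $\eta\equiv 1$ on $B_R$ and $|\nabla\eta|\le C/R$, multiply \eqref{mainresult} by $v^{\alpha}\eta^{2k}$ for parameters $\alpha>0$ and $k$ large, and integrate by parts. Using $\Delta v=-v^p-M|\nabla v|^q$ one obtains a first identity of the schematic form
\[
\int v^{\alpha+p}\eta^{2k}+M\int v^\alpha|\nabla v|^q\eta^{2k}=\alpha\int v^{\alpha-1}|\nabla v|^2\eta^{2k}+\mathcal R_1(R).
\]
A second identity is then produced in Bernstein fashion: differentiating \eqref{mainresult}, contracting with $\nabla v$, and testing against $v^{\alpha-1}\eta^{2k}$ (equivalently, combining the Bochner formula with the equation). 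It relates $\int v^{\alpha-1}|\nabla v|^2\eta^{2k}$ to $\int v^{\alpha+p-1}|\nabla v|^2\eta^{2k}$ and $\int v^{\alpha-1}|\nabla^2 v|^2\eta^{2k}$, plus a further gradient term of shape $M\int v^{\alpha-1}|\nabla v|^{q+1}\eta^{2k}$ and a remainder $\mathcal R_2(R)$.

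The key step is Young's inequality with conjugate exponents $(p+1,\tfrac{p+1}{p})$, applied through the splitting
\[
v^\alpha|\nabla v|^q=\bigl(v^{\frac{\alpha+p}{p+1}}\bigr)\bigl(v^{\frac{p(\alpha-1)}{p+1}}|\nabla v|^{\frac{2p}{p+1}}\bigr),
\]
which yields
\[
v^\alpha|\nabla v|^q\le \frac{\varepsilon}{p+1}\,v^{\alpha+p}+\frac{p\,C_\varepsilon}{p+1}\,v^{\alpha-1}|\nabla v|^2,
\]
and an analogous bound for the gradient term in the second identity. Taking an appropriate linear combination of the two identities and using these bounds, the $M$-contributions can be moved onto the left, producing
\[
\bigl(C_0(n,p,\alpha)-M\,C_1(n,p,\alpha,\varepsilon)\bigr)\int v^{\alpha+p}\eta^{2k}\le \mathcal R(R),
\]
where $C_0>0$ in an admissible $\alpha$-window made nonempty by $1<p\le\frac{n+2}{n-2}$, and $\mathcal R(R)\to 0$ as $R\to\infty$ by standard H\"older control of the cutoff remainders (hence the need for $k$ large). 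Setting $M_1:=\sup\{C_0/C_1\}$ over admissible $(\alpha,\varepsilon)$ gives the theorem, since for $M<M_1$ we conclude $\int v^{\alpha+p}=0$ and $v\equiv0$.

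The main obstacle will be guaranteeing simultaneously that (i) the coefficient $C_0(n,p,\alpha)$ is strictly positive on a nonempty window of $\alpha$, and (ii) the cutoff remainders $\mathcal R_i(R)$ can be absorbed into the two principal terms by H\"older's inequality; both requirements tighten as $p\to\frac{n+2}{n-2}$, where the Gidas--Spruck algebra of constants becomes critical. For $3\le n\le 6$ these constants leave enough room that $C_1/C_0$ can be driven to zero by choice of $\alpha$ and $\varepsilon$, which is the same dimensional slack that makes the classical Bernstein identity unconditional in low dimension, and this is what allows us to take $M_1=\infty$ in that range.
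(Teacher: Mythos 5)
Your overall strategy --- integral Bernstein identities plus a Young inequality keyed to the criticality $q=\frac{2p}{p+1}$, closed off by a cutoff --- is the same as the paper's, and your central algebraic observation (that $v^{\alpha}|\nabla v|^{q}$ splits via Young with exponents $(p+1,\frac{p+1}{p})$ into exactly $v^{\alpha+p}$ and $v^{\alpha-1}|\nabla v|^{2}$) is precisely the mechanism the paper exploits. But the proposal misses the term that actually creates the threshold $M_1$. When you substitute $\Delta v=-v^{p}-M|\nabla v|^{q}$ back into the Bochner/second identity (e.g.\ into $(\Delta v)^2$ or into $\nabla v\cdot\nabla(\Delta v)$), a term \emph{quadratic} in $M$ of the form $-c\,M^{2}\int v^{\alpha}|\nabla v|^{2q}\eta^{2k}$ with $c>0$ inevitably appears; in the paper this is the coefficient $c_2=-\frac{q}{\gamma+q}<0$ in \eqref{TheFinalEqua1}. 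Your linear combination only tracks terms linear in $M$, and your claimed final inequality $(C_0-MC_1)\int v^{\alpha+p}\eta^{2k}\le\mathcal R(R)$ is linear in $M$, which cannot be right: after trading the $M^{2}|\nabla v|^{2q}$ term against $Mv^{\alpha-1}|\nabla v|^{q+2}$ and $Mv^{\alpha+p}|\nabla v|^{q}$ by the very Young inequality you describe, the smallness condition comes out homogeneous of degree $p+1$ in $M$ (the paper's condition \eqref{thm1_final} involves $M^{p+1}$), not degree $1$.

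Two further gaps. First, your explanation of why $M_1=\infty$ for $3\le n\le 6$ ("the same dimensional slack that makes the classical Bernstein identity unconditional in low dimension") is not correct: the classical integral Gidas--Spruck argument for $\Delta v+v^{p}=0$ has no restriction to $n\le 6$. The actual dichotomy comes from whether one can add a multiple $U$ of the equation tested against $Mv^{\alpha}|\nabla v|^{\gamma+q}$ so that the coefficients of \emph{both} $M^{2}v^{\alpha}|\nabla v|^{2q}$ and $Mv^{\alpha-1}|\nabla v|^{q+2}$ (the quantities $\mathbb K_3$ and $\mathbb K_5$ in the paper) are simultaneously nonnegative; this succeeds for $3\le n\le 6$ (with $U=1+\frac{q}{n}$ or $1+\frac{2q}{n}$) and provably fails for $n\ge 7$ because $2-(n-1)q<0$ forces a trade that costs a factor of $M^{p+1}$. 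Second, at the critical exponent $p=\frac{n+2}{n-2}$ your admissible $\alpha$-window for $C_0>0$ closes; the paper survives by shifting $\alpha$ through a small parameter $P>0$ and paying for the resulting bad term $v^{\alpha+p-1}|\nabla v|^{2}$ with the strictly positive $M$-terms, which forces the auxiliary smallness parameter to be chosen \emph{depending on} $M$ ($\varepsilon_0\ll M$). Without confronting these three points, the proof does not go through.
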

Here, $M_1$ will be determined later in Section 4. For $3\le n\le 6$, by simple computation, we can prove that for any $M$, Theorem \ref{Theorem1} holds. But for $n\ge 7$, we need to apply Young inequality.	
	\begin{theorem}\label{Theorem2}
		For any $n \geq 3, 1 < p \leq \frac{n + 2}{n - 2}$, $q = \frac{2p}{p + 1}$, there exists $M_2 > 0$, such that if $M > M_2$, then $v \equiv 0$.
	\end{theorem}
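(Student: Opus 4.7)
The plan is to adapt the differential-identity / Young-inequality framework used for Theorem~\ref{Theorem1} to the complementary regime of large $M$. As throughout the paper, one first reduces to the case $v>0$ by the strong maximum principle.

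Multiplying \eqref{mainresult} by a test function of the form $v^\alpha \phi_R$, where $\phi_R$ is a smooth cutoff supported in $B_{2R}$ with $\phi_R\equiv 1$ on $B_R$, $|\nabla\phi_R|\lesssim R^{-1}$, $|\Delta\phi_R|\lesssim R^{-2}$, and $\alpha\in\mathbb R$ is a parameter to be chosen, and integrating by parts, one obtains an identity linking the three integrals
\[
I_1:=\int v^{\alpha-1}|\nabla v|^2\,\phi_R,\quad I_2:=\int v^{\alpha+p}\,\phi_R,\quad I_3:=M\!\int v^\alpha|\nabla v|^q\,\phi_R,
\]
with a remainder controlled by $|\nabla\phi_R|$ and $|\Delta\phi_R|$. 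Because $q=\frac{2p}{p+1}$ is exactly critical, the mixed gradient term admits the exact factorisation
\[
v^\alpha|\nabla v|^q = \bigl(v^{\alpha-1}|\nabla v|^2\bigr)^{p/(p+1)}\bigl(v^{\alpha+p}\bigr)^{1/(p+1)},
\]
so Young's inequality with a free parameter $\theta>0$ and conjugate exponents $(p+1)/p$ and $p+1$ produces a sharp bound of $I_3$ by a weighted combination of $I_1$ and $I_2$.

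In contrast with Theorem~\ref{Theorem1}, where the goal is to treat the $|\nabla v|^q$ term as a small perturbation of the Lane--Emden identity, here the plan is to arrange the Young splitting so that, after absorption, the coefficient of $I_2$ is \emph{negative}. A direct algebraic computation then shows this is possible exactly when $M$ exceeds an explicit threshold
\[
M_2 = M_2(n,p) = \inf_{\alpha,\theta}\, C(n,p,\alpha,\theta),
\]
obtained by optimising the signs in Young's inequality (and matching the constant appearing in Theorem~C above). Once the sign is reversed, one lets $R\to\infty$: the cutoff remainder is controlled by H\"older's inequality applied to the annulus $B_{2R}\setminus B_R$, and vanishes in the limit, forcing $I_2=0$ and hence $v\equiv 0$.

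The main obstacle is the joint choice of $\alpha$ and $\theta$: one must simultaneously create the desired negative coefficient of $I_2$ after absorbing the Young bound \emph{and} ensure the cutoff remainder decays to zero as $R\to\infty$, with both conditions compatible in the full range $1<p\le \frac{n+2}{n-2}$. A secondary technical point is that for $\alpha<0$ the test function $v^\alpha$ is singular where $v=0$; this is handled by the standard regularisation $(v+\varepsilon)^\alpha$, followed by passage $\varepsilon\to 0^+$. The resulting $M_2$ need not be sharp: for the purposes of Theorem~\ref{main result1} it suffices that $M_2\le M_1$, which is ensured automatically by the optimisation.
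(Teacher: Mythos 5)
Your proposal goes a genuinely different route from the paper, but it contains gaps that I do not think can be repaired within the framework you set up.

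The central problem is structural. Multiplying \eqref{mainresult} by $v^{\alpha}\phi_R$ and integrating by parts yields the single scalar relation
\begin{equation*}
-\alpha\int v^{\alpha-1}|\nabla v|^{2}\phi_R+\int v^{\alpha+p}\phi_R+M\int v^{\alpha}|\nabla v|^{q}\phi_R=\int v^{\alpha}\nabla v\cdot\nabla\phi_R .
\end{equation*}
Here $I_2$ and $I_3$ enter with the \emph{same} sign, so no application of Young's inequality to the factorisation $v^{\alpha}|\nabla v|^{q}=(v^{\alpha-1}|\nabla v|^{2})^{p/(p+1)}(v^{\alpha+p})^{1/(p+1)}$ can turn the coefficient of $I_2$ negative: Young bounds the product from above, whereas you would need a lower bound on $I_3$ in terms of $I_2$ to achieve the sign reversal you describe. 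If $\alpha\le0$ all three terms on the left are nonnegative and everything reduces to showing the cutoff remainder vanishes; but $\int v^{\alpha}\nabla v\cdot\nabla\phi_R$ can only be absorbed at the cost of a term like $\int v^{\alpha}|\nabla\phi_R|^{q'}\phi_R^{1-q'}\sim R^{\,n-q'}$ with $q'=2p/(p-1)$, and $n-q'<0$ fails precisely when $p>n/(n-2)$. This is the classical obstruction: first-order test-function arguments stop at the Serrin exponent $n/(n-2)$, and reaching the full range $1<p\le\frac{n+2}{n-2}$ requires the second-order (integral Bernstein/Obata) machinery. That is exactly why the paper multiplies by $v^{\alpha}|\nabla v|^{\gamma}\Delta v$, extracts the coercive trace-free Hessian terms $\varepsilon v^{\alpha}|\nabla v|^{\gamma}E_{ij}^{2}$ in \eqref{TheFinalEqua1}, and for $n\ge7$ is forced to take $\gamma=n-4$ with the parameters $S,Q$ solving $b_2=0$. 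In the paper, large $M$ is used for a much narrower purpose: when $p$ is within $\frac{1}{n^{2}}$ of critical the coefficient $\mathbb K_4$ of the single bad term $v^{\alpha+p-1}|\nabla v|^{\gamma+2}$ is negative of size $O(P)$, and this term is absorbed by the two good terms $\mathbb K_5 Mv^{\alpha-1}|\nabla v|^{\gamma+q+2}$ and $\mathbb K_6 Mv^{\alpha+p}|\nabla v|^{\gamma+q}$ via Young with exponents $(\tfrac{2}{q},\tfrac{2}{2-q})$, at a cost scaling like $M^{-2/q}$ — hence the threshold $M_2$.

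Two further points. First, your remark that $M_2$ should match the constant of Theorem C would defeat the purpose of the theorem: the whole point is that $M_2<M_1$ (Lemma \ref{Lem0}), so that Theorems \ref{Theorem1} and \ref{Theorem2} jointly cover all $M>0$; the Theorem~C threshold grows like $n^{p/(p+1)}$ and is far too large, whereas the paper's $M_2$ is, e.g., below $0.8$ for $n=7$. Second, the assertion that $M_2\le M_1$ is ``ensured automatically by the optimisation'' is unsupported; in the paper this comparison is a separate, delicate computation occupying the end of Section \ref{Proof of Thm3} and several appendix claims. You would need to supply the second-order identity, the choice $\gamma=n-4$ guaranteeing $1-\frac2n<\frac{1}{p_1}+\frac{1}{q_1}<1$ in the cutoff step, and an explicit verification that your threshold lies below $M_1$.
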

	Besides, after computation, we have the following lemma:
	
	\begin{lemma}\label{Lem0}
		For any $1 < p \leq \frac{n + 2}{n - 2}$, $q = \frac{2p}{p + 1}$, we have $M_2 < M_1$.
	\end{lemma}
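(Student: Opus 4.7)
The plan is to compare the two threshold constants directly once their explicit expressions from Theorems \ref{Theorem1} and \ref{Theorem2} are written down side by side. Since Theorem \ref{Theorem1} asserts $M_1=\infty$ whenever $3\le n\le 6$, the inequality $M_2<M_1$ is automatic in that range, so I only have to do real work when $n\ge 7$. In that regime both $M_1$ and $M_2$ come out as explicit algebraic expressions in $n$ and $p$: $M_2$ arises as the critical value of a Pohozaev–Bernstein computation (structurally the same combination of factors $(p-1)/(p+1)$, $n$, $p+1$, $p$ seen in Theorem C), while $M_1$ is produced by the optimal choice in a Young inequality that is applied after the differential identity. I would therefore begin by recalling these closed forms verbatim from Sections 3 and 4, keeping the same notation.

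Next, I would reduce the claim $M_2<M_1$ to a single algebraic inequality by taking either the quotient $M_1/M_2$ (natural because both constants factor as products of powers) or the $(p+1)$-th power of both sides to clear the fractional exponents. In either version one obtains an inequality between two polynomial-type expressions in the quantities $n$, $p-1$, $p+1$, $n-2$, $(n+2)-(n-2)p$; I would substitute $s=p-1\in(0,4/(n-2)]$ or $t=\frac{n+2}{n-2}-p\in[0,\frac{4}{n-2})$ to make the monotonicity structure visible, and then verify the inequality by differentiating once in $s$ (or $t$) and checking the sign.

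The main obstacle I anticipate is \emph{not} the existence of a proof but controlling the sharpness at the two ends of the range of $p$. At $p\to 1^+$ the factor $(p-1)^{(p-1)/(p+1)}$ in $M_2$ tends to $1$ while the corresponding factor from the Young-inequality optimization in $M_1$ behaves differently, and at the Sobolev critical exponent $p=(n+2)/(n-2)$ the two constants are closest together (this is precisely the reason Theorem \ref{Veron0} only covered $|M|\le\epsilon_0$ in the literature). I would therefore check the critical endpoint $p=(n+2)/(n-2)$ separately: plug it in, simplify the exponents, and exhibit a clean positive gap $M_1-M_2>0$ depending only on $n$. Once the endpoint case and the monotonicity (or convexity) in $p$ on $(1,(n+2)/(n-2)]$ are both established, $M_2<M_1$ on the whole interval follows, completing the lemma and in particular giving Theorem \ref{main result1} as the union of the ranges $M<M_1$ and $M>M_2$.
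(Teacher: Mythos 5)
Your reduction of the case $3\le n\le 6$ to the statement $M_1=\infty$ is fine, but for $n\ge 7$ the plan rests on a misidentification of $M_2$, and this is fatal to the proposed comparison. In the paper $M_2$ is \emph{not} a Pohozaev--Bernstein critical value of the shape $\left(\frac{p-1}{p+1}\right)^{\frac{p-1}{p+1}}\left(\frac{n(p+1)^2}{4p}\right)^{\frac{p}{p+1}}$ (that is the constant of Theorem C, which grows like $n^{1/2}$ and would in fact \emph{exceed} $M_1\sim C/n$ for large $n$, so the lemma would be false for your candidate $M_2$). Instead, the paper's $M_2$ equals $0$ on all of $1<p<\frac{n+2}{n-2}-\frac{1}{n^2}$ (there one takes $P=p-\frac{n+2}{n-2}+\frac1{n^2}<0$ and condition \eqref{condition08240930} holds for every $M>0$), and only on the thin window $\frac{n+2}{n-2}-\frac{1}{n^2}\le p\le\frac{n+2}{n-2}$ is $M_2$ positive, where it is defined through the Young-inequality absorption of the $v^{\alpha+p-1}|\nabla v|^{\gamma+2}$ term as $M_2=P^{q/2}\left(\frac q2\right)^{q/2}\bigl(J\cdot\frac{2}{2-q}\bigr)^{-\frac{2-q}{2}}\bigl(-\frac{q}{\gamma+q}+U_0\bigr)^{-q/2}$. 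The entire reason $M_2<M_1$ holds is the factor $P^{q/2}$ with $P\lesssim n^{-2}$, combined with a lower bound $U_0>(4-2\sqrt2)(\frac1n+\frac7{n^2})$ and the lower bound $M_1>1.7\bigl[\frac{(n-1)^2(n+2)}{4n^2}-1\bigr]^{-1}$; the comparison is then a numerical check at $n=7$ and an elementary polynomial inequality for $n\ge8$. None of this is visible from your outline.

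Beyond the wrong closed form, the strategy itself (take the quotient $M_1/M_2$, substitute $s=p-1$, and prove monotonicity in $p$ over the whole interval) does not match the actual structure: $M_2$ is identically zero on most of the $p$-range and jumps to a positive value only near the critical exponent, so there is no smooth function of $p$ to differentiate, and the ``endpoint analysis at $p\to1^+$'' you anticipate is vacuous. To repair the argument you would need to (i) first reproduce the constructions of Sections 5 and 6 that actually \emph{define} $M_1$ and $M_2$ (including the choices $\gamma=n-4$, $S$ solving $b_2=0$, $U=U_0$, and the near-critical choice of $P$), and (ii) only then carry out the quantitative comparison, which in the paper is an explicit $1/n$-expansion rather than a monotonicity argument in $p$.
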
	

 One new idea in this paper is that we introduce the new parameter $S,Q$ in (\ref{0.1}), but before our work one always chooses $S=Q=\frac{1}{n}$ as in  (\ref{0.2}). This new idea obtains one more freedom to get the positive term in (\ref{TheFinalEqua1}).

For convenience, we make a summary of ideas and article planning arrangement as follows. We expand the range of $p$ of the result in \cite{MR4150912} to $1 < p \le \frac{n + 2}{n - 2}$ for Liouville type theorem of the equation \eqref{mainresult}, that is we not only prove the subcritical case but also the critical case which calls for much more complicated computations. Besides, contrast to Theorem \ref{Veron0} and Theorem \ref{main result1}, we even remove the sufficient small condition for constants $M$ in \cite{MR4150912}. The main method taken in our work is based on a differential identity and Young inequality. This paper is organized as follows: we use integration by parts to get the differential identity in Section 2. According to the differential identity, some conditions are given to make the nonnegative solution is zero in Section 3. On the basis of satisfying the above conditions, we introduce Young inequality and prove that there is only zero solution in Section 4. Finally, we're going to talk about the upper bound and lower bound of $M$ separately in Section 5 and Section 6, and point out that the lower bound is smaller that the upper bound. Especially, for the upper bound $M_1$, we classify the cases where $2\le n\le 6$ and $n\ge 7$, then we complete the proof of the Lemma \ref{Lem0}. In conclusion, it follows that there is no requirement of positive constant $M$ to make Liouville type theorem hold.

	{\it Acknowledgement.} The authors thank Prof. Bidaut-V\'eron 
	and  V\'eron for their very useful comments on this paper.
	The authors were supported by  National Natural Science Foundation of China (grants No.12141105 and grants No.12301098) and National Key Research and Development Project (grants No.SQ2020YFA070080).
	\section{Integral identity}\label{1}
	Consider the equation:
	\begin{equation}\label{lap}
		\Delta v + N v^p + M |\nabla v|^{q}=0.
	\end{equation}
Multiplying \eqref{lap} by $v^{\alpha}|\nabla v|^{\gamma}\Delta v$,
\begin{equation}\label{2}
		\begin{aligned}
			   & v^\alpha|\nabla v|^{\gamma}(\Delta v)^2  =  -N v^{\alpha + p } |\nabla v|^{\gamma} \Delta v  - M v^{\alpha}|\nabla v| ^{\gamma + q}\Delta v. 
		\end{aligned}
	\end{equation}
	We observed that the term $v^\alpha|\nabla v|^{\gamma}(\Delta v)^2$ is important.
	In the following contents, we always choose $\gamma = 0$ or $\gamma \geq 3$. And at one fixed point $x_0$, suppose that $v_1(x_0) = |\nabla v|(x_0)$ and define
	\begin{equation}
		\begin{aligned}\label{0.1}
			&G_{11} = v_{11} - S \Delta v,\\
			&G_{ij} = v_{ij} - Q\delta_{ij}\Delta v, \quad i + j > 2,\\
		\end{aligned}
	\end{equation}
	and
	\begin{equation}\label{0.2}
		E_{ij} = v_{ij} - \frac{1}{n}\delta_{ij}\Delta v, \text{ for }  i, j = 1,...,n.
	\end{equation}
	We remark that the definition of $G_{ij}$ is depending on the choice of frame but $E_{i j}$ is not.

	\begin{itemize}
		\item The term $v^{\alpha - 1}|\nabla v|^{\gamma + 2}\Delta v $:
	
	\begin{equation}\label{section1equ5}
		\begin{aligned}
			v^{\alpha - 1}|\nabla v|^{\gamma + 2}\Delta v &= (v^{\alpha - 1}|\nabla v|^{\gamma + 2}v_i)_i - (\alpha - 1)v^{\alpha - 2}|\nabla v|^{\gamma +4} - (\gamma + 2)v^{\alpha - 1}|\nabla v|^{\gamma}v_i v_j v_{ij}\\
			&= (v^{\alpha - 1}|\nabla v|^{\gamma + 2}v_i)_i - (\alpha - 1)v^{\alpha - 2}|\nabla v|^{\gamma +4}\\
			& - (\gamma + 2)v^{\alpha - 1}|\nabla v|^{\gamma + 2}(G_{11} + S\Delta v),\\
			\Rightarrow v^{\alpha - 1}|\nabla v|^{\gamma + 2}\Delta v &= \frac{1}{1 + \gamma S + 2S} (v^{\alpha - 1}|\nabla v|^{\gamma + 2}v_i)_i - \frac{\alpha - 1}{1 + \gamma S + 2S}v^{\alpha - 2}|\nabla v|^{\gamma +4}\\
			&- \frac{\gamma + 2}{1 + \gamma S + 2S} v^{\alpha - 1}|\nabla v|^{\gamma + 2} G_{11}.
		\end{aligned}
	\end{equation}

	\item The term $v^{\alpha}|\nabla v|^{\gamma}(\Delta v)^2$: in fact, we have
	\begin{equation*}
		\begin{aligned}
			v^{\alpha}|\nabla v|^{\gamma}(\Delta v)^2 & = v^{\alpha}|\nabla v|^{\gamma}v_{ii}v_{j j}\\
			&= (v^{\alpha}|\nabla v|^\gamma v_{ii}v_j)_j - \alpha v^{\alpha - 1}|\nabla v|^{\gamma + 2}v_{ii} - v^{\alpha}|\nabla v|^{\gamma}_j v_{ii}v_j - v^\alpha |\nabla v|^\gamma v_{i i j}v_j,
		\end{aligned}
	\end{equation*}
	\begin{equation*}
		\begin{aligned}
			v^\alpha |\nabla v|^\gamma v_{iij}v_j &= (v^\alpha |\nabla v|^{\gamma }v_{ij}v_j)_i - (v^{\alpha}|\nabla v|^\gamma v_j)_iv_{ij}\\
			&= (v^\alpha |\nabla v|^{\gamma }v_{ij}v_j)_i - \alpha v^{\alpha - 1}|\nabla v|^\gamma v_i v_j v_{ij} - v^{\alpha}|\nabla v|^\gamma_iv_{ij}v_j - v^{\alpha}|\nabla v|^{\gamma}v_{ij}^2.
		\end{aligned}
	\end{equation*}
	As a result, we obtain
	\begin{equation}
		\begin{aligned}
			v^\alpha|\nabla v|^{\gamma}(\Delta v)^2 &= v^{\alpha}|\nabla v|^{\gamma}(\Delta v)^2\\
			&= (v^{\alpha}|\nabla v|^\gamma v_{ii}v_j)_j - \alpha v^{\alpha - 1}|\nabla v|^{\gamma + 2}\Delta v - v^{\alpha}|\nabla v|^{\gamma}_j \Delta v\cdot v_j \\
			&- (v^\alpha |\nabla v|^{\gamma }v_{ij}v_j)_i + \alpha v^{\alpha - 1}v_i v_j v_{ij}|\nabla v|^\gamma + v^{\alpha}|\nabla v|^\gamma_iv_{ij}v_j + v^{\alpha}|\nabla v|^{\gamma}v_{ij}^2\\
			&=  (v^{\alpha}|\nabla v|^{\gamma}\Delta v v_i -  v^{\alpha}|\nabla v|^{\gamma}v_{ij}v_j)_i + v^{\alpha}|\nabla v|^{\gamma}v_{ij}^2 - \alpha v^{\alpha - 1}|\nabla v|^{\gamma + 2}\Delta v\\
			& + \alpha v^{\alpha - 1}|\nabla v|^{\gamma}v_i v_j v_{ij} - v^{\alpha}\Delta v\cdot v_j|\nabla v|^{\gamma}_j + v^{\alpha}v_j v_{ij}|\nabla v|^{\gamma}_i,
		\end{aligned}
	\end{equation}
	
	\begin{align*}
		\Rightarrow &v^\alpha|\nabla v|^{\gamma}(\Delta v)^2 = \left(1 - \frac{1}{n}\right)\Big( v^{\alpha}|\nabla v|^{\gamma}\Delta v v_i\Big)_i - ( v^{\alpha}|\nabla v|^{\gamma}E_{ij}v_j)_i + \sum_{i + j > 2} v^{\alpha}|\nabla v|^{\gamma}v_{ij}^2\\
			&+ v^{\alpha}|\nabla v|^{\gamma}G_{11}^2 + 2S v^{\alpha}|\nabla v|^{\gamma}G_{11}\Delta v + S^2 v^{\alpha}|\nabla v|^{\gamma}(\Delta v)^2 - \alpha v^{\alpha - 1}|\nabla v|^{\gamma + 2}\Delta v\\
			& + \alpha v^{\alpha - 1}|\nabla v|^{\gamma + 2}v_{11} - \gamma v^{\alpha}|\nabla v|^{\gamma - 2}v_{i} v_{j} v_{ij}\Delta v + \gamma v^{\alpha}|\nabla v|^{\gamma - 2}v_{j}v_{ij}v_{l}v_{il}\\
			&= \left(1 - \frac{1}{n}\right)\Big( v^{\alpha}|\nabla v|^{\gamma}\Delta v v_i\Big)_i - ( v^{\alpha}|\nabla v|^{\gamma}E_{ij}v_j)_i + \sum_{i + j > 2} v^{\alpha}|\nabla v|^{\gamma}v_{ij}^2\\
			&+ v^{\alpha}|\nabla v|^{\gamma}G_{11}^2 + 2S v^{\alpha}|\nabla v|^{\gamma}G_{11}\Delta v + S^2 v^{\alpha}|\nabla v|^{\gamma}(\Delta v)^2 - \alpha v^{\alpha - 1}|\nabla v|^{\gamma + 2}\Delta v \\
			&+ \alpha v^{\alpha - 1}|\nabla v|^{\gamma + 2}v_{11} - \gamma v^{\alpha}|\nabla v|^{\gamma } v_{11}\Delta v + \gamma v^{\alpha}|\nabla v|^{\gamma }v_{i1}^2\\
			 &= \left(1 - \frac{1}{n}\right)\Big( v^{\alpha}|\nabla v|^{\gamma}\Delta v v_i\Big)_i - ( v^{\alpha}|\nabla v|^{\gamma}E_{ij}v_j)_i + \sum_{i + j > 2} v^{\alpha}|\nabla v|^{\gamma}v_{ij}^2\\
			&+ v^{\alpha}|\nabla v|^{\gamma}G_{11}^2 + 2S v^{\alpha}|\nabla v|^{\gamma}G_{11}\Delta v + S^2 v^{\alpha}|\nabla v|^{\gamma}(\Delta v)^2 - \alpha v^{\alpha - 1}|\nabla v|^{\gamma + 2}\Delta v \\
			&+ \alpha v^{\alpha - 1}|\nabla v|^{\gamma + 2}\Big(G_{11} + S\Delta v\Big) - \gamma v^{\alpha}|\nabla v|^{\gamma }\Big(G_{11} + S\Delta v\Big) \Delta v + \gamma v^{\alpha}|\nabla v|^{\gamma }\Big(G_{11} + S\Delta v\Big)^2\\
			&+ \sum_{i > 1} \gamma v^{\alpha}|\nabla v|^{\gamma}G_{1i}^2.
	\end{align*}
	Thus we get that
	\begin{align*}
		\Rightarrow & \Big(1 - S^2 + \gamma S - \gamma S^2\Big) v^{\alpha}|\nabla v|^{\gamma}(\Delta v)^2\\
			&=  \left(1 - \frac{1}{n}\right)\Big( v^{\alpha}|\nabla v|^{\gamma}\Delta v v_i\Big)_i - \Big( v^{\alpha}|\nabla v|^{\gamma}E_{ij}v_j\Big)_i + \sum_{i + j > 2} v^{\alpha}|\nabla v|^{\gamma}v_{ij}^2\\
			&+ \Big(\alpha S - \alpha\Big)v^{\alpha - 1}|\nabla v|^{\gamma + 2}\Delta v + \alpha v^{\alpha - 1}|\nabla v|^{\gamma + 2}G_{11}\\
			&+ \Big(2\gamma S + 2S - \gamma\Big) v^{\alpha} |\nabla v|^{\gamma}G_{11}\Delta v + \Big(1 + \gamma\Big) v^{\gamma}|\nabla v|^{\gamma}G_{11}^2 + \sum_{i > 1} \gamma v^{\alpha}|\nabla v|^{\gamma }G_{i1}^2 \\
			&=  \left(1 - \frac{1}{n}\right)\Big( v^{\alpha}|\nabla v|^{\gamma}\Delta v v_i\Big)_i - \Big( v^{\alpha}|\nabla v|^{\gamma}E_{ij}v_j\Big)_i + \sum_{i + j > 2} v^{\alpha}|\nabla v|^{\gamma}v_{ij}^2\\
			&+ \Big(\alpha S - \alpha\Big)\cdot \Bigg[ \frac{1}{1 + \gamma S + 2S} (v^{\alpha - 1}|\nabla v|^{\gamma + 2}v_i)_i - \frac{\alpha - 1}{1 + \gamma S + 2S}v^{\alpha - 2}|\nabla v|^{\gamma +4}\\
			&- \frac{\gamma + 2}{1 + \gamma S + 2S} v^{\alpha - 1}|\nabla v|^{\gamma + 2} G_{11}\Bigg] + \alpha v^{\alpha - 1}|\nabla v|^{\gamma + 2}G_{11}\\
			&+ \Big(2\gamma S + 2S - \gamma\Big) v^{\alpha} |\nabla v|^{\gamma}G_{11}\Delta v + \Big(1 + \gamma\Big) v^{\gamma}|\nabla v|^{\gamma}G_{11}^2 + \sum_{i > 1} \gamma v^{\alpha}|\nabla v|^{\gamma }G_{i1}^2,
	\end{align*}

	\begin{equation}\label{equ111}
		\begin{aligned}
			\Rightarrow & \Big(1 - S^2 + \gamma S - \gamma S^2\Big)v^{\alpha}|\nabla v|^{\gamma}(\Delta v)^2\\
			&=  \left(1 - \frac{1}{n}\right)\Big( v^{\alpha}|\nabla v|^{\gamma}\Delta v v_i\Big)_i - \Big( v^{\alpha}|\nabla v|^{\gamma}E_{ij}v_j\Big)_i + \frac{\alpha S - \alpha}{1 + \gamma S + 2S} \Big(v^{\alpha - 1}|\nabla v|^{\gamma + 2}v_i\Big)_i\\
			&+ \sum_{i + j > 2} v^{\alpha}|\nabla v|^{\gamma}v_{ij}^2  + \frac{\alpha(\alpha - 1)(1 - S)}{1 + \gamma S + 2S}v^{\alpha - 2}|\nabla v|^{\gamma +4}+  \frac{\alpha(\gamma + 3)}{1 + \gamma S + 2S} v^{\alpha - 1}|\nabla v|^{\gamma + 2} G_{11}\\
			&+ \Big(2\gamma S - \gamma + 2S\Big) v^{\alpha} |\nabla v|^{\gamma}G_{11}\Delta v + (1 + \gamma) v^{\gamma}|\nabla v|^{\gamma}G_{11}^2 + \sum_{i > 1} \gamma v^{\alpha}|\nabla v|^{\gamma }G_{i1}^2 .\\
		\end{aligned}
	\end{equation}	
	Recall that $G_{ij} := v_{ij} - Q\delta_{ij}\Delta v  , i + j > 2$, then
	\begin{equation*}
		\begin{aligned}
			\sum_{i > 1}v^{\alpha}|\nabla v|^{\gamma}v_{ii}^2 &= \sum_{i > 1}v^{\alpha}|\nabla v|^{\gamma}\Big(G_{ii} + Q\Delta v\Big)^2\\
			&= \sum_{i > 1}v^{\alpha}|\nabla v|^{\gamma}G_{ii}^2 + 2Q\sum_{i > 1}v^{\alpha}|\nabla v|^{\gamma}G_{ii}\Delta v + (n - 1)Q^2 v^{\alpha}|\nabla v|^{\gamma}(\Delta v)^2.
		\end{aligned}
	\end{equation*}
	If $(n - 1)Q + S = 1$, then
	\begin{equation*}
		\begin{aligned}
			G_{11} + \sum_{i > 1}G_{ii} = 0,
		\end{aligned}
	\end{equation*}
		and
	\begin{equation*}
		\begin{aligned}
			\sum_{i > 1}v^{\alpha}|\nabla v|^{\gamma}v_{ii}^2 &= \sum_{i > 1}v^{\alpha}|\nabla v|^{\gamma}\Big(G_{ii} + Q\Delta v\Big)^2\\
			&= \sum_{i > 1}v^{\alpha}|\nabla v|^{\gamma}G_{ii}^2 - 2Qv^{\alpha}|\nabla v|^{\gamma}G_{11}\Delta v + (n - 1)Q^2 v^{\alpha}|\nabla v|^{\gamma}(\Delta v)^2.
		\end{aligned}
	\end{equation*}
	It follows that	
	
	\begin{align}\label{8.239.37}
		\Rightarrow & \Big[1 - S^2 + \gamma S - \gamma S^2 - (n - 1)Q^2\Big]v^{\alpha}|\nabla v|^{\gamma}(\Delta v)^2 \notag\\
			&= \left(1 - \frac{1}{n}\right)\Big( v^{\alpha}|\nabla v|^{\gamma}\Delta v v_i\Big)_i - ( v^{\alpha}|\nabla v|^{\gamma}E_{ij}v_j)_i + \frac{\alpha S - \alpha}{1 + \gamma S + 2S} (v^{\alpha - 1}|\nabla v|^{\gamma + 2}v_i)_i \notag\\
			&+ \sum_{i + j > 2} v^{\alpha}|\nabla v|^{\gamma}G_{ij}^2  + \frac{\alpha(\alpha - 1)(1 - S)}{1 + \gamma S + 2S}v^{\alpha - 2}|\nabla v|^{\gamma +4}+  \frac{\alpha(\gamma + 3)}{1 + \gamma S + 2S} v^{\alpha - 1}|\nabla v|^{\gamma + 2} G_{11}\notag\\
			&+ \Big(2\gamma S - \gamma + 2S - 2Q\Big) v^{\alpha} |\nabla v|^{\gamma}G_{11}\Delta v + (1 + \gamma) v^{\alpha}|\nabla v|^{\gamma}G_{11}^2 + \sum_{i > 1} \gamma v^{\alpha}|\nabla v|^{\gamma }G_{i1}^2. 
	\end{align}
		
	\begin{remark}
		The divergence term
		\begin{equation*}
			A := (v^{\alpha}|\nabla v|^{\gamma}v_i \Delta v)_i
		\end{equation*}
		 is important and will be explained later.
	\end{remark}

	\item The term $A$:
	
	\begin{equation*}
		\begin{aligned}
			A &= (v^{\alpha}|\nabla v|^{\gamma}v_i \Delta v)_i\\
			&= \alpha v^{\alpha - 1}|\nabla v|^{\gamma + 2}\Delta v + \gamma v^{\alpha}|\nabla v|^{\gamma - 2} v_i v_j v_{ij}\Delta v + v^{\alpha}|\nabla v|^{\gamma}(\Delta v)^2 + \underset{A_1}{ v^{\alpha}|\nabla v|^{\gamma}v_i (\Delta v)_i}\\
			&= 	\alpha v^{\alpha - 1}|\nabla v|^{\gamma + 2}\Delta v + \gamma v^{\alpha}|\nabla v|^{\gamma } G_{11}\Delta v + (1 + \gamma S) v^{\alpha}|\nabla v|^{\gamma}(\Delta v)^2 + \underset{A_1}{ v^{\alpha}|\nabla v|^{\gamma}v_i (\Delta v)_i}.\\
		\end{aligned}
	\end{equation*}
	Substitute \eqref{lap} into $A_1$,
	\begin{equation*}
		\begin{aligned}
			A_1 &=  v^{\alpha}|\nabla v|^{\gamma}v_i \Big( -Nv^{p} - M|\nabla v|^q\Big)_i\\
			&= -pNv^{\alpha + p - 1}|\nabla v|^{\gamma + 2} - qM v^{\alpha}|\nabla v|^{\gamma + q - 2}v_i v_j v_{ij}.
		\end{aligned}
	\end{equation*}	
	Multiply \eqref{lap}  by $v^{\alpha - 1}|\nabla v|^{\gamma + 2}$
	\begin{equation}
		\begin{aligned}
			-N v^{\alpha + p - 1}|\nabla v|^{\gamma + 2} = v^{\alpha - 1}|\nabla v|^{\gamma + 2}\Delta v  + M v^{\alpha - 1}|\nabla v|^{\gamma + q + 2}.
		\end{aligned}
	\end{equation}
	Therefore we have
	\begin{equation}\label{sub22}
		\begin{aligned}
			A_1 &= - qM v^{\alpha}|\nabla v|^{\gamma + q - 2}v_i v_j v_{ij} + p v^{\alpha - 1}|\nabla v|^{\gamma + 2}\Delta v + p M v^{\alpha - 1}|\nabla v|^{\gamma + q + 2}.\\
		\end{aligned}
	\end{equation}
	On the other hand, we know
	\begin{equation}\label{sub21}
		\begin{aligned}
			 & pMv^{\alpha - 1}|\nabla v|^{\gamma + q + 2} - qM v^{\alpha }|\nabla v|^{\gamma + q - 2}v_i v_j v_{ij}\\
			 &= pM v^{\alpha - 1}|\nabla v|^{\gamma + q + 2} - \frac{ qM}{\gamma + q}v^{\alpha }(|\nabla v|^{\gamma + q})_i v_i\\
			 &= \Big( p + \frac{q\alpha }{\gamma + q} \Big) Mv^{\alpha - 1}|\nabla v|^{\gamma + q + 2} - \frac{q}{\gamma + q}M(v^{\alpha}|\nabla v|^{\gamma + q} v_i)_i\\
			 &+ \frac{q M}{\gamma + q}v^{\alpha}|\nabla v|^{\gamma + q}\Big( -N v^{p} - M |\nabla v|^q \Big)\\
			 &= -\frac{q M}{\gamma + q}(v^{\alpha}|\nabla v|^{\gamma + q} v_i)_i + \Big( p + \frac{q\alpha  }{\gamma + q} \Big) M v^{\alpha - 1}|\nabla v|^{\gamma + q + 2}\\
			 &- \frac{ qMN}{\gamma + q}v^{\alpha + p}|\nabla v|^{\gamma + q} - \frac{ qM^2 }{\gamma + q}v^{\alpha}|\nabla v|^{\gamma + 2q}.
		\end{aligned}
	\end{equation}
	So we get that
	\begin{equation*}
		\begin{aligned}
			A &= (v^{\alpha}|\nabla v|^{\gamma}v_i \Delta v)_i\\
			&= 	\alpha v^{\alpha - 1}|\nabla v|^{\gamma + 2}\Delta v + \gamma v^{\alpha}|\nabla v|^{\gamma } G_{11}\Delta v + (1 + \gamma S) v^{\alpha}|\nabla v|^{\gamma}(\Delta v)^2 + \underset{A_1}{ v^{\alpha}|\nabla v|^{\gamma}v_i (\Delta v)_i}\\
			&= \alpha v^{\alpha - 1}|\nabla v|^{\gamma + 2}\Delta v + \gamma v^{\alpha}|\nabla v|^{\gamma } G_{11}\Delta v + (1 + \gamma S)v^{\alpha}|\nabla v|^{\gamma}(\Delta v)^2\\
			&+ p v^{\alpha - 1}|\nabla v|^{\gamma + 2}\Delta v -\frac{q M}{\gamma + q}(v^{\alpha}|\nabla v|^{\gamma + q} v_i)_i - \frac{ qM^2 }{\gamma + q}v^{\alpha}|\nabla v|^{\gamma + 2q}\\
			&+ \Big( p + \frac{q\alpha  }{\gamma + q} \Big) M v^{\alpha - 1}|\nabla v|^{\gamma + q + 2} - \frac{ q}{\gamma + q}MNv^{\alpha + p}|\nabla v|^{\gamma + q}.
		\end{aligned}
	\end{equation*}
Substitute \eqref{section1equ5} into it:
	\begin{equation*}
		\begin{aligned}
			\Rightarrow& - (1 + \gamma S)v^{\alpha}|\nabla v|^{\gamma}(\Delta v)^2 \\
			&= -A + \gamma v^{\alpha}|\nabla v|^{\gamma } G_{11}\Delta v + (\alpha + p ) \cdot\Bigg[  \frac{1}{1 + \gamma S + 2S} (v^{\alpha - 1}|\nabla v|^{\gamma + 2}v_i)_i - \frac{\alpha - 1}{1 + \gamma S + 2S}v^{\alpha - 2}|\nabla v|^{\gamma +4} \\
			&- \frac{\gamma + 2}{1 + \gamma S + 2S} v^{\alpha - 1}|\nabla v|^{\gamma + 2} G_{11}\Bigg]  -\frac{q M}{\gamma + q}(v^{\alpha}|\nabla v|^{\gamma + q} v_i)_i - \frac{ qM^2 }{\gamma + q}v^{\alpha}|\nabla v|^{\gamma + 2q} \\
			&+ \Big( p + \frac{q\alpha  }{\gamma + q} \Big) M v^{\alpha - 1}|\nabla v|^{\gamma + q + 2}  - \frac{q}{\gamma + q}M Nv^{\alpha + p}|\nabla v|^{\gamma + q},
		\end{aligned}
	\end{equation*}

	\begin{equation}\label{sub3final4}
		\begin{aligned}
			\Rightarrow& - (1 + \gamma S) v^{\alpha}|\nabla v|^{\gamma}(\Delta v)^2\\
			 &= -A +  \frac{\alpha + p}{1 + \gamma S + 2S} (v^{\alpha - 1}|\nabla v|^{\gamma + 2}v_i)_i - \frac{q M}{\gamma + q}(v^{\alpha}|\nabla v|^{\gamma + q} v_i)_i + \gamma v^{\alpha}|\nabla v|^{\gamma } G_{11}\Delta v \\
			 &- ( \alpha + p)\frac{\gamma + 2}{1 + \gamma S + 2S}v^{\alpha - 1}|\nabla v|^{\gamma + 2} G_{11} - (\alpha + p)\frac{\alpha - 1}{1 + \gamma S + 2S}v^{\alpha - 2}|\nabla v|^{\gamma + 4}  - \frac{q }{\gamma + q} M^2 v^{\alpha }|\nabla v|^{\gamma + 2q}\\
			& + \Big( p + \frac{q\alpha }{\gamma + q} \Big)  M v^{\alpha - 1}|\nabla v|^{\gamma + q + 2} - \frac{q}{\gamma + q} M N  v^{\alpha + p}|\nabla v|^{\gamma + q}.
		\end{aligned}
	\end{equation}
	Using the fact that
	\begin{equation}
		\begin{aligned}
			E_{ij}^2 &= v_{ij}^2 - \frac{1}{n}(\Delta v)^2\\
			&= G_{ij}^2 + 2(S - Q)G_{11}\Delta v + \Big[S^2 + (n - 1)Q^2 - \frac{1}{n}\Big](\Delta v)^2,
		\end{aligned}
	\end{equation}	
	and the equation \eqref{sub3final4}, we know that for any $\varepsilon > 0$, it follows that
	
	\begin{align}\label{sub3final5}
		& - (1 + \gamma S - \varepsilon\tau) v^{\alpha}|\nabla v|^{\gamma}(\Delta v)^2 \notag\\
			 &= -A +  \frac{\alpha + p}{1 + \gamma S + 2S} (v^{\alpha - 1}|\nabla v|^{\gamma + 2}v_i)_i - \frac{q M}{\gamma + q}(v^{\alpha}|\nabla v|^{\gamma + q} v_i)_i + \gamma v^{\alpha}|\nabla v|^{\gamma } G_{11}\Delta v \notag\\
			 &- ( \alpha + p)\frac{\gamma + 2}{1 + \gamma S + 2S}v^{\alpha - 1}|\nabla v|^{\gamma + 2} G_{11} - (\alpha + p)\frac{\alpha - 1}{1 + \gamma S + 2S}v^{\alpha - 2}|\nabla v|^{\gamma + 4}  - \frac{q }{\gamma + q} M^2 v^{\alpha }|\nabla v|^{\gamma + 2q} \notag\\
			& + \Big( p + \frac{q\alpha }{\gamma + q} \Big)  M v^{\alpha - 1}|\nabla v|^{\gamma + q + 2} - \frac{q}{\gamma + q} M N  v^{\alpha + p}|\nabla v|^{\gamma + q} \notag\\
			&+ \varepsilon v^{\alpha}|\nabla v|^{\gamma}E_{ij}^2 - \varepsilon v^{\alpha}|\nabla v|^{\gamma}G_{ij}^2 - 2\varepsilon(S - Q)v^{\alpha}|\nabla v|^{\gamma}G_{11}\Delta v ,
	\end{align}
	with $\tau := S^2 + (n - 1)Q^2 - \frac{1}{n}$.
	
	\end{itemize}
	Now we can give the final differential identity. Combining \eqref{8.239.37} and \eqref{sub3final5} to eliminate the term $v^{\alpha}|\nabla v|^{\gamma}(\Delta v)^2$ on the left hand, we obtain
	\begin{align*}
		0& = -A +  \frac{\alpha + p}{1 + \gamma S + 2S} (v^{\alpha - 1}|\nabla v|^{\gamma + 2}v_i)_i - \frac{q M}{\gamma + q}(v^{\alpha}|\nabla v|^{\gamma + q} v_i)_i\\
			&+ \gamma v^{\alpha}|\nabla v|^{\gamma } G_{11}\Delta v - ( \alpha + p)\frac{\gamma + 2}{1 + \gamma S + 2S}v^{\alpha - 1}|\nabla v|^{\gamma + 2} G_{11}\\
			& - (\alpha + p)\frac{\alpha - 1}{1 + \gamma S + 2S}v^{\alpha - 2}|\nabla v|^{\gamma + 4}  - \frac{q }{\gamma + q} M^2 v^{\alpha }|\nabla v|^{\gamma + 2q}\\
			& + \Big( p + \frac{q\alpha }{\gamma + q} \Big)  M v^{\alpha - 1}|\nabla v|^{\gamma + q + 2} - \frac{q}{\gamma + q} M N  v^{\alpha + p}|\nabla v|^{\gamma + q}\\
			&+ \varepsilon v^{\alpha}|\nabla v|^{\gamma}E_{ij}^2 - \varepsilon v^{\alpha}|\nabla v|^{\gamma}G_{ij}^2 - 2\varepsilon(S - Q)v^{\alpha}|\nabla v|^{\gamma}G_{11}\Delta v \\
			 &+ \frac{1 + \gamma S - \varepsilon \tau}{1 - S^2 + \gamma S - \gamma S^2 - (n - 1)Q^2}\Bigg \{ \left(1 - \frac{1}{n}\right)\Big( v^{\alpha}|\nabla v|^{\gamma}\Delta v v_i\Big)_i - ( v^{\alpha}|\nabla v|^{\gamma}E_{ij}v_j)_i \\
			 &+ \frac{\alpha S - \alpha}{1 + \gamma S + 2S} (v^{\alpha - 1}|\nabla v|^{\gamma + 2}v_i)_i\\
			&+ \sum_{i + j > 2} v^{\alpha}|\nabla v|^{\gamma}G_{ij}^2  + \frac{\alpha(\alpha - 1)(1 - S)}{1 + \gamma S + 2S}v^{\alpha - 2}|\nabla v|^{\gamma +4}+  \frac{\alpha(\gamma + 3)}{1 + \gamma S + 2S} v^{\alpha - 1}|\nabla v|^{\gamma + 2} G_{11}\\
			&+ \Big(2\gamma S - \gamma + 2S - 2Q\Big) v^{\alpha} |\nabla v|^{\gamma}G_{11}\Delta v + (1 + \gamma) v^{\gamma}|\nabla v|^{\gamma}G_{11}^2 + \sum_{i > 1} \gamma v^{\alpha}|\nabla v|^{\gamma }G_{i1}^2\Bigg\},
	\end{align*}

	\begin{align}\label{sec1_final}
		\Rightarrow & 0 = W + \varepsilon v^{\alpha}|\nabla v|^{\gamma}E_{ij}^2 \notag\\
			&+ \Bigg[ \gamma  + \frac{1 + \gamma S - \varepsilon \tau}{1 - S^2 + \gamma S - \gamma S^2 - (n - 1)Q^2}\Big(2\gamma S - \gamma + 2S - 2Q\Big) - 2\varepsilon(S - Q) \Bigg]v^{\alpha}|\nabla v|^{\gamma } G_{11}\Delta v \notag\\
			&+ \Bigg[ \frac{1 + \gamma S - \varepsilon \tau}{1 - S^2 + \gamma S - \gamma S^2 - (n - 1)Q^2} - \varepsilon \Bigg]  v^{\alpha}|\nabla v|^{\gamma}\sum_{i + j > 2}^nG_{ij}  ^2 \notag\\
			& + \Bigg[ \frac{1 + \gamma S - \varepsilon \tau}{1 - S^2 + \gamma S - \gamma S^2 - (n - 1)Q^2}(1 + \gamma) - \varepsilon \Bigg]  v^{\alpha}|\nabla v|^{\gamma}\sum_{i = 1}^n G_{1i}^2 \notag\\
			&+ \Bigg[ - \Big( \alpha + p\Big)\frac{\alpha - 1}{1 + \gamma S + 2S} + \frac{1 + \gamma S - \varepsilon \tau}{1 - S^2 + \gamma S - \gamma S^2 - (n - 1)Q^2}\cdot\frac{\alpha(\alpha - 1)(1 - S)}{1 + \gamma S + 2S}   \Bigg]v^{\alpha - 2}|\nabla v|^{\gamma + 4} \notag\\
			&+  \Bigg[ \frac{1 + \gamma S - \varepsilon \tau}{1 - S^2 + \gamma S - \gamma S^2 - (n - 1)Q^2}\cdot  \frac{\alpha(\gamma + 3)}{1 + \gamma S + 2S} - \Big(\alpha + p\Big)\frac{\gamma + 2}{1 + \gamma S + 2S}   \Bigg]v^{\alpha - 1}|\nabla v|^{\gamma + 2} G_{11} \notag\\
			& - \frac{ q }{\gamma + q} M^2 v^{\alpha}|\nabla v|^{\gamma + 2q} + \Big( p + \frac{q\alpha }{\gamma + q}\Big) M v^{\alpha - 1}|\nabla v|^{\gamma + q + 2} - \frac{q}{\gamma + q} M N  v^{\alpha + p}|\nabla v|^{\gamma + q},
	\end{align}
	where $W$ consists of all the divergence terms and $\tau := S^2 + (n - 1)Q^2 - \frac{1}{n}$.
We can rewrite \eqref{sec1_final} as

	\begin{equation}\label{TheFinalEqua1}
		\begin{aligned}
			0 &= W + \varepsilon v^{\alpha}|\nabla v|^{\gamma}E_{ij}^2 + a_1 v^{\alpha}|\nabla v|^{\gamma} \sum_{i + j > 2} G_{ij}^2 + \sum_{i = 1}^n a_2 v^{\alpha}|\nabla v|^{\gamma}G_{1i}^2  +   a_3  v^{\alpha - 2}|\nabla v|^{\gamma + 4} \\
			&+ b_1 v^{\alpha - 1}|\nabla v|^{\gamma + 2}G_{11} + b_2 v^{\alpha}|\nabla v|^{\gamma}G_{11}\Delta v  + c_2 M^2 v^{\alpha}|\nabla v|^{\gamma + 2q} \\
			&- b_3 N v^{\alpha + p - 1}|\nabla v|^{\gamma + 2} - b_4  M v^{\alpha - 1}|\nabla v|^{\gamma + q + 2} + b_5 MN v^{\alpha + p}|\nabla v|^{\gamma + q},\\
		\end{aligned}
	\end{equation}
	with
	\begin{align}
		a_1 &= \frac{1 + \gamma S - \varepsilon \tau}{1 - S^2 + \gamma S - \gamma S^2 - (n - 1)Q^2} - \varepsilon ,\\
			a_2 &=\frac{1 + \gamma S - \varepsilon \tau}{1 - S^2 + \gamma S - \gamma S^2 - (n - 1)Q^2}(1 + \gamma) - \varepsilon ,\\
			a_3 &= - \Big(\alpha + p\Big)\frac{\alpha - 1}{1 + \gamma S + 2S} + \frac{1 + \gamma S - \varepsilon \tau}{1 - S^2 + \gamma S - \gamma S^2 - (n - 1)Q^2}\cdot\frac{\alpha(\alpha - 1)(1 - S)}{1 + \gamma S + 2S} ,\\
			b_1 &= \frac{1 + \gamma S - \varepsilon \tau}{1 - S^2 + \gamma S - \gamma S^2 - (n - 1)Q^2}\cdot  \frac{\alpha(\gamma + 3)}{1 + \gamma S + 2S} - \Big( \alpha + p\Big)\frac{\gamma + 2}{1 + \gamma S + 2S} ,\\
			b_2 &=  \gamma + \frac{1 + \gamma S - \varepsilon \tau}{1 - S^2 + \gamma S - \gamma S^2 - (n - 1)Q^2}\Big(2\gamma S - \gamma + 2S - 2Q\Big) - 2\varepsilon (S - Q) ,\\
			c_2 &= - \frac{q }{\gamma + q},\\
			b_3 &= 0,\\
			b_4 &= - p - \frac{q\alpha }{\gamma + q} ,\\
			b_5 &= - \frac{q}{\gamma + q} = c_2.
	\end{align}

	\section{Conditions}
	
	Multiply (\ref{lap})  by $N v^{\alpha + p}|\nabla v|^{\gamma}$:
	\begin{equation}
		\begin{aligned}
			&N^2 v^{\alpha + 2p}|\nabla v|^\gamma + MN v^{\alpha + p}|\nabla v|^{\gamma + q} \\
			&= -\frac{N}{1 + \gamma S}(v^{\alpha + p}|\nabla v|^{\gamma}v_i)_i + \frac{\alpha + p}{1 + \gamma S}Nv^{\alpha + p - 1}|\nabla v|^{\gamma + 2} + \frac{\gamma}{1 + \gamma S}Nv^{\alpha + p}|\nabla v|^{\gamma}G_{11}.
		\end{aligned}
	\end{equation}
	Multiply (\ref{lap})  by $M v^{\alpha}|\nabla v|^{\gamma + q}$:
	\begin{equation}
		\begin{aligned}
			& M N v^{\alpha + p}|\nabla v|^{\gamma + q} +  M^2 v^{\alpha}|\nabla v|^{\gamma + 2q}\\
	 		&= \frac{- M}{1 + \gamma S + qS} (v^{\alpha}|\nabla v|^{\gamma + q}v_i)_i + \Big( \frac{\alpha}{1 + \gamma S + qS} \Big) M v^{\alpha - 1}|\nabla v|^{\gamma + q + 2}+ \frac{ \gamma + q}{1 + \gamma S + qS} M v^{\alpha}|\nabla v|^{\gamma + q}G_{11} .\\
		\end{aligned}
	\end{equation}
	Multiply (\ref{lap})  by $ v^{\alpha - 1}|\nabla v|^{\gamma + 2}$:
	\begin{equation*}
		\begin{aligned}
			- M v^{\alpha - 1}|\nabla v|^{\gamma + q + 2} &= \frac{1}{1 + \gamma S + 2S} (v^{\alpha - 1}|\nabla v|^{\gamma + 2}v_i)_i - \frac{\alpha - 1}{1 + \gamma S + 2S} v^{\alpha - 2}|\nabla v|^{\gamma +4}\\
			&- \frac{\gamma + 2}{1 + \gamma S + 2S} v^{\alpha - 1}|\nabla v|^{\gamma + 2} G_{11} + N v^{\alpha + p - 1}|\nabla v|^{\gamma + 2}.\\
		\end{aligned}
	\end{equation*}
	Besides, by Cauchy-Schwarz inequality,
	\begin{align*}
		(n - 1)\sum_{i > 1}G_{ii}^2 \geq \left(\sum_{i > 1} G_{ii}\right)^2 = G_{11}^2.
	\end{align*}
		If $\varepsilon > 0$ is small enough, $\gamma \geq 0$, $ b_2 = 0$ and
	\begin{equation*}
		\frac{1 + \gamma S}{1 - S^2 + \gamma S - \gamma S^2 - (n - 1)Q^2} > 0,
	\end{equation*}
	then by \eqref{TheFinalEqua1} we get that
	\begin{align*}
		0 &\geq  W + \varepsilon v^{\alpha}|\nabla v|^{\gamma}E_{ij}^2 + \Big( a_2 + \frac{1}{n - 1}a_1 \Big) v^{\alpha}|\nabla v|^{\gamma}G_{11}^2  + a_3  v^{\alpha - 2}|\nabla v|^{\gamma + 4} + b_1 v^{\alpha - 1}|\nabla v|^{\gamma + 2}G_{11}\\
			& + c_2 M^2 v^{\alpha}|\nabla v|^{\gamma + 2q}- b_3 N v^{\alpha + p - 1}|\nabla v|^{\gamma + 2} - b_4  M v^{\alpha - 1}|\nabla v|^{\gamma + q + 2} + c_2 MN v^{\alpha + p}|\nabla v|^{\gamma + q}\\
			&=  W + \varepsilon v^{\alpha}|\nabla v|^{\gamma}E_{ij}^2 + \Big( a_2 + \frac{1}{n - 1}a_1 \Big) v^{\alpha}|\nabla v|^{\gamma}G_{11}^2  + a_3  v^{\alpha - 2}|\nabla v|^{\gamma + 4} + b_1 v^{\alpha - 1}|\nabla v|^{\gamma + 2}G_{11}\\
			& + c_2 M^2 v^{\alpha}|\nabla v|^{\gamma + 2q} - b_3 N v^{\alpha + p - 1}|\nabla v|^{\gamma + 2} - b_4  M v^{\alpha - 1}|\nabla v|^{\gamma + q + 2} + c_2 MN v^{\alpha + p}|\nabla v|^{\gamma + q}\\
			&+ T \Bigg( N^2 v^{\alpha + 2p}|\nabla v|^\gamma + MN v^{\alpha + p}|\nabla v|^{\gamma + q} - \frac{\alpha + p}{1 + \gamma S}Nv^{\alpha + p - 1}|\nabla v|^{\gamma + 2} \\
			&- \frac{\gamma}{1 + \gamma S}Nv^{\alpha + p}|\nabla v|^{\gamma}G_{11} \Bigg)\\
			&+ U \Bigg( M N v^{\alpha + p}|\nabla v|^{\gamma + q} +  M^2 v^{\alpha}|\nabla v|^{\gamma + 2q}\\
	 		&-  \frac{\alpha}{1 + \gamma S + qS} M v^{\alpha - 1}|\nabla v|^{\gamma + q + 2} - \frac{ \gamma + q}{1 + \gamma S + qS} M v^{\alpha }|\nabla v|^{\gamma + q}G_{11} \Bigg)\\
	 		&+ P \Bigg( - N v^{\alpha + p - 1}|\nabla v|^{\gamma + 2} + \frac{\alpha - 1}{1 + \gamma S + 2S} v^{\alpha - 2}|\nabla v|^{\gamma +4}\\
			&+ \frac{\gamma + 2}{1 + \gamma S + 2S} v^{\alpha - 1}|\nabla v|^{\gamma + 2} G_{11} - M v^{\alpha - 1}|\nabla v|^{\gamma + q + 2}\Bigg),
	\end{align*}
	
	\begin{align*}
		\Rightarrow 0 &\geq  W + \varepsilon v^{\alpha}|\nabla v|^{\gamma}E_{ij}^2 + \Big( a_2 + \frac{1}{n - 1}a_1 \Big) v^{\alpha}|\nabla v|^{\gamma}G_{11}^2  \\
			&+ \Bigg( a_3 + P \frac{\alpha - 1}{1 + \gamma S + 2S}\Bigg)  v^{\alpha - 2}|\nabla v|^{\gamma + 4} + \Big( b_1 + P  \frac{\gamma + 2}{1 + \gamma S + 2S}\Big) v^{\alpha - 1}|\nabla v|^{\gamma + 2}G_{11}\\
			&+ T N^2 v^{\alpha + 2p}|\nabla v|^\gamma  + \Big( c_2 + U\Big) M^2 v^{\alpha}|\nabla v|^{\gamma + 2q} - \Bigg[ b_3  + P + \frac{T(\alpha + p)}{1 + \gamma S}\Bigg]N v^{\alpha + p - 1}|\nabla v|^{\gamma + 2} \\
			&- \frac{T\gamma}{1 + \gamma S}Nv^{\alpha + p}|\nabla v|^{\gamma}G_{11} - U\frac{ \gamma + q}{1 + \gamma S + qS} M v^{\alpha}|\nabla v|^{\gamma + q}G_{11} \\
			&- \Bigg( b_4 + P + U \frac{\alpha }{1 + \gamma S + qS} \Bigg) M v^{\alpha - 1}|\nabla v|^{\gamma + q + 2} + \Big( c_2 + T + U\Big) MN v^{\alpha + p}|\nabla v|^{\gamma + q}.\\
	\end{align*}
	Define $B_0:=  a_2 + \frac{1}{n - 1}a_1  $ and if $B_0 > 0$ , then
	\begin{align*}
		\Rightarrow & 0 \geq W + \varepsilon v^{\alpha}|\nabla v|^{\gamma}E_{ij}^2\\
			& - \frac{1}{4B_0}v^{\alpha}|\nabla v|^{\gamma}\Bigg[ \Big( b_1 + P  \frac{\gamma + 2}{1 + \gamma S + 2S}\Big) v^{- 1}|\nabla v|^{ 2} - \frac{T\gamma}{1 + \gamma S}Nv^p - U\frac{ \gamma + q}{1 + \gamma S + qS} M |\nabla v|^{q} \Bigg]^2\\
			&+ \Bigg( a_3 + P \frac{\alpha - 1}{1 + \gamma S + 2S} \Bigg)  v^{\alpha - 2}|\nabla v|^{\gamma + 4} + T N^2 v^{\alpha + 2p}|\nabla v|^\gamma  + \Big( c_2 + U\Big) M^2 v^{\alpha}|\nabla v|^{\gamma + 2q} \\
			&- \Bigg[ b_3  + P + \frac{T(\alpha + p)}{1 + \gamma S}\Bigg]N v^{\alpha + p - 1}|\nabla v|^{\gamma + 2}- \Bigg( b_4 + P + U  \frac{\alpha}{1 + \gamma S + qS} \Bigg) M v^{\alpha - 1}|\nabla v|^{\gamma + q + 2}\\
			&+ \Big( c_2 + T + U\Big) MN v^{\alpha + p}|\nabla v|^{\gamma + q}.\\
	\end{align*}

	\begin{align}\label{0824925}
		\Rightarrow 0 &\geq W + \varepsilon v^{\alpha}|\nabla v|^{\gamma}E_{ij}^2 + \Bigg[ a_3 + P \frac{\alpha - 1}{1 + \gamma S + 2S}  - \frac{1}{4B_0} \Big( b_1 + P  \frac{\gamma + 2}{1 + \gamma S + 2S}\Big)^2 \Bigg]  v^{\alpha - 2}|\nabla v|^{\gamma + 4}\notag \\
			&+ \Bigg[ T - \frac{1}{4B_0}\left(\frac{T\gamma}{1 + \gamma S}\right)^2\Bigg] N^2 v^{\alpha + 2p}|\nabla v|^{\gamma} \notag\\
			&+ \Bigg[ c_2 + U - \frac{1}{4B_0}\Big(U\frac{ \gamma + q}{1 + \gamma S + qS}\Big)^2 \Bigg] M^2 v^{\alpha}|\nabla v|^{\gamma + 2q}\notag \\
			&- \Bigg[ \frac{T(\alpha + p)}{1 + \gamma S} + P - \frac{1}{2B_0}\frac{T\gamma}{1 + \gamma S}\Big( b_1 + P\frac{\gamma + 2}{1 + \gamma S + 2S}\Big) \Bigg] N v^{\alpha + p - 1}|\nabla v|^{\gamma + 2}\notag\\
			&- \Bigg[ b_4 + P + U  \frac{\alpha}{1 + \gamma S + qS}  - \frac{1}{2B_0} \Big( b_1 + P  \frac{\gamma + 2}{1 + \gamma S + 2S}\Big) U\frac{ \gamma + q}{1 + \gamma S + qS} \Bigg] M v^{\alpha - 1}|\nabla v|^{\gamma + q + 2}\notag\\
			&+ \Big( c_2 + T + U - \frac{1}{2B_0}\frac{T\gamma}{1 + \gamma S}\cdot U\frac{\gamma + q}{1 + \gamma S + qS} \Big)MN v^{\alpha + p}|\nabla v|^{\gamma + q}.
	\end{align}
	Define
	\begin{numcases}{}
			\mathbb K_1 := a_3 + P\Big(\frac{\alpha - 1}{1 + \gamma S + 2S} \Big) - \frac{1}{4B_0} \Big( b_1 + P  \frac{\gamma + 2}{1 + \gamma S + 2S}\Big)^2  ,\\
			\mathbb K_2 := T - \frac{1}{4B_0}\left(\frac{T\gamma}{1 + \gamma S}\right)^2 ,\\
			\mathbb K_3 := c_2 + U - \frac{1}{4B_0}\Big(U\frac{ \gamma + q}{1 + \gamma S + qS}\Big)^2 ,\\
			\mathbb K_4 :=  -\frac{T(\alpha + p)}{1 + \gamma S} - P + \frac{1}{2B_0}\frac{T\gamma}{1 + \gamma S}\Big( b_1 + P\frac{\gamma + 2}{1 + \gamma S + 2S}\Big) ,\\
			\mathbb K_5 :=  -b_4 - P - U\Big( \frac{\alpha }{1 + \gamma S + qS} \Big) + \frac{1}{2B_0} \Big( b_1 + P  \frac{\gamma + 2}{1 + \gamma S + 2S}\Big) U\frac{\gamma + q}{1 + \gamma S + qS}  ,\quad \quad\\
			\mathbb K_6 :=   c_2 + T + U - \frac{1}{2B_0}\frac{T\gamma}{1 + \gamma S}\cdot U\frac{\gamma + q}{1 + \gamma S + qS}  .
		\end{numcases}
	Without loss of generality, we can let $N = 1$.	 Then \eqref{0824925} becomes
	\begin{equation}\label{0824927}
		\begin{aligned}
			\Rightarrow 0 &\geq W + \varepsilon v^{\alpha}|\nabla v|^{\gamma}E_{ij}^2 + \mathbb K_1  v^{\alpha - 2}|\nabla v|^{\gamma + 4} + \mathbb K_2 v^{\alpha + 2p}|\nabla v|^{\gamma} + \mathbb K_3 M^2 v^{\alpha}|\nabla v|^{\gamma + 2q} \\
			&+ \mathbb K_4 v^{\alpha + p - 1}|\nabla v|^{\gamma + 2}+ \mathbb K_5 M v^{\alpha - 1}|\nabla v|^{\gamma + q + 2} + \mathbb K_6 M v^{\alpha + p}|\nabla v|^{\gamma + q}.\\
		\end{aligned}
	\end{equation}
	Note that by Cauchy-Schwarz inequality, we have $\mathbb K_6 \geq \mathbb K_2 + \mathbb K_3$.
	In general, we hope that the following condition holds for $\varepsilon_1 > 0$ small enough:
		
		\begin{equation}\label{condition08240930}
			\begin{aligned}
				&(\mathbb K_1 - \varepsilon_1)  v^{\alpha - 2}|\nabla v|^{\gamma + 4} + (\mathbb K_2 - \varepsilon_1) v^{\alpha + 2p}|\nabla v|^{\gamma} + (\mathbb K_3 - \varepsilon_1) M^2 v^{\alpha}|\nabla v|^{\gamma + 2q} \\
			&+ \mathbb K_4 v^{\alpha + p - 1}|\nabla v|^{\gamma + 2}+ \mathbb K_5 M v^{\alpha - 1}|\nabla v|^{\gamma + q + 2} + \mathbb K_6 M v^{\alpha + p}|\nabla v|^{\gamma + q} \geq 0.
			\end{aligned}
		\end{equation}
		If the condition is satisfied, then for any $\varepsilon > 0$ small enough, we have
	\begin{equation}\label{sub4_eq1}
		\begin{aligned}
			&\varepsilon v^{\alpha}|\nabla v|^{\gamma}|E_{ij}|^2 + \varepsilon v^{\alpha - 2}|\nabla v|^{\gamma + 4}  + \varepsilon v^{\alpha + 2p}|\nabla v|^{\gamma } + \varepsilon v^{\alpha}|\nabla v|^{\gamma + 2q}\\
			& \leq B_1(v^{\alpha} |\nabla v|^{\gamma}v_j E_{ij})_i + B_2 (v^{\alpha + p}|\nabla v|^{\gamma}v_i)_i + B_3 (v^{\alpha - 1}|\nabla v|^{\gamma + 2}v_i)_i + B_4(v^{\alpha}|\nabla v|^{\gamma + q} v_i)_i.
		\end{aligned}
	\end{equation}
			
		\section{Young inequality and cut-off functions}
	In this part, we assume \eqref{sub4_eq1} holds and prove that $|\nabla v| \equiv 0$. Define $\eta$ is a smooth cut-off function, satisfying that
	\begin{equation*}
		\begin{aligned}
			&\eta \equiv 1 \quad \text{in} \quad B_{R},\\
			&\eta \equiv 0 \quad \text{in} \quad \mathbb R^n\backslash B_{2R}.\\
		\end{aligned}
	\end{equation*}
	 Multiply \eqref{sub4_eq1} by $\eta^{\delta}$ and integrate over $\mathbb R^n$,
	\begin{equation*}
		\begin{aligned}
			\text{RHS} &=  B_1\delta\int v^{\alpha} |\nabla v|^{\gamma}v_j E_{ij}\eta^{\delta - 1}\eta_i +  B_2\delta \int v^{\alpha + p}|\nabla v|^{\gamma}v_i\eta^{\delta - 1}\eta_i \\
			&+ B_3\delta\int v^{\alpha - 1}|\nabla v|^{\gamma + 2}v_i \eta^{\delta - 1}\eta_i + B_4 \delta \int v^{\alpha}|\nabla v|^{\gamma + q} v_i\eta^{\delta - 1}\eta_i\\
			&\leq \frac{\varepsilon}{2}\int v^{\alpha }|\nabla v|^{\gamma}|E_{ij}|^2\eta^{\delta} + C \int v^{\alpha}|\nabla v|^{\gamma + 2}\eta^{\delta - 2}|\nabla \eta|^2\\
			&+ \frac{\varepsilon}{2}\int v^{\alpha + 2p}|\nabla v|^{\gamma}\eta^{\delta} + C\int v^{\alpha}|\nabla v|^{\gamma + 2}\eta^{\delta - 2}|\nabla \eta|^2\\
			&+ \frac{\varepsilon}{2}\int v^{\alpha - 2}|\nabla v|^{\gamma + 4}\eta^{\delta} + C\int v^{\alpha}|\nabla v|^{\gamma + 2}\eta^{\delta - 2}|\nabla \eta|^2\\
			&+ \frac{\varepsilon}{2} \int v^{\alpha }|\nabla v|^{\gamma + 2q}\eta^{\delta} + C\int v^{\alpha}|\nabla v|^{\gamma + 2}\eta^{\delta - 2}|\nabla \eta|^2.\\
		\end{aligned}
	\end{equation*}
	Therefore we get that
	
	\begin{equation}
		\begin{aligned}
			\Rightarrow &\int v^{\alpha + 2p} |\nabla v|^{\gamma}\eta^{\delta} + \int v^{\alpha - 2}|\nabla v|^{\gamma + 4}\eta^{\delta}\\
		 &\leq C\int v^{\alpha}|\nabla v|^{\gamma + 2}\eta^{\delta - 2}|\nabla \eta|^2.
		\end{aligned}
	\end{equation}
	Define $p_1, q_1, \sigma_1 \geq 0$, such that
	\begin{equation}\label{parameter3}
		\frac{1}{p_1} + \frac{1}{q_1} + \frac{1}{\sigma_1} = 1, \text{ and } p_1, q_1, \sigma_1 \geq 0.
	\end{equation}
	So by Young inequality, we know
	\begin{equation*}
		\begin{aligned}
			&\int v^{\alpha}|\nabla v|^{\gamma + 2}\eta^{\delta - 2}|\nabla \eta|^2\\
			&= \int v^{\alpha - A}|\nabla v|^{\gamma + 2 - B} v^{A}|\nabla v|^{ B} \eta^{\delta - 2}|\nabla \eta|^2\\
			&\leq \varepsilon_0 \int v^{\alpha - 2}|\nabla v|^{\gamma + 4}\eta^\delta + \varepsilon_0 \int v^{\alpha + 2p}|\nabla v|^{\gamma}\eta^{\delta}  + C\int \eta^{\delta - 2\sigma_1}|\nabla \eta|^{2\sigma_1}\\
			\Rightarrow & \int v^{\alpha +  2p}|\nabla v|^{\gamma}\eta^{\delta} + \int v^{\alpha - 2}|\nabla v|^{\gamma + 4}\eta^\delta \\
			&\leq C\int \eta^{\delta - 2\sigma_1}|\nabla \eta|^{2\sigma_1}\\
			&\leq CR^{n - 2\sigma_1} \rightarrow 0, \text{as R tends to infinity}.
		\end{aligned}
	\end{equation*}
	The last two steps use the condition that
	\begin{equation}\label{parameter5}
		\begin{aligned}
			\delta - 2\sigma_1 >0,\\
			n - 2\sigma_1  < 0.
		\end{aligned}
	\end{equation}
	So we always take $\delta$ large enough.
	In conclusion, we need
	\begin{numcases}{}
		\label{0824young1}\frac{\alpha + 2p}{A} = \frac{\gamma}{B} = p_1,\\
		\label{0824young2}\frac{\alpha - 2}{\alpha - A} = \frac{\gamma + 4}{\gamma + 2 - B} = q_1,\\
		1 - \frac{2}{n} < \frac{1}{p_1} + \frac{1}{q_1} < 1.
	\end{numcases}

	Therefore, as long as the condition \eqref{condition08240930} is satisfied, and there exists such $p_1, q_1, \sigma_1$, then $v \equiv 0$.

	\section{Proof of Theorem \ref{Theorem1}}
	In this section, we choose $\gamma = 0, S = Q = \frac{1}{n}$, then
	\begin{equation*}
		\begin{aligned}
			G_{ij}&= E_{ij},\\
			\tau &= 0,\\
			a_1 &= \frac{n}{n - 1} -\varepsilon ,\\
			a_3 &= -\frac{n}{n + 2}p(\alpha - 1) ,\\
			b_1 &= \frac{n}{n - 1}\alpha - \frac{2np}{n + 2} ,\\
			b_2 &= 0,\\
			c_2 &= - 1,\\
			b_3 &= 0,\\
			b_4 &= - p - \alpha ,\\
			b_5 &= - 1,\\
			B_0 &= \frac{n}{n - 1}\left(\frac{n}{n - 1} - \varepsilon\right).
		\end{aligned}
	\end{equation*}
	And we get
	
	\begin{numcases}{}
			\mathbb K_1 = a_3 + P\cdot\frac{\alpha - 1}{1  + 2S}  - \frac{1}{4B_0} \Big( b_1 +   \frac{ 2P}{1  + 2S}\Big)^2 , \\
			\mathbb K_2 = T ,\\
			\mathbb K_3 = c_2 + U - \frac{1}{4B_0}\Big(U\frac{ q}{1 + qS}\Big)^2 , \\
			\mathbb K_4 = -T(\alpha + p) - P ,\\
			 \mathbb K_5 = - b_4 - P - U\Big( \frac{\alpha }{1 + qS} \Big) + \frac{1}{2B_0} \Big( b_1 + P  \frac{2}{1 + 2S}\Big) U\frac{q}{1 + qS}  ,\\
			\mathbb K_6 =  c_2 + T + U .
	\end{numcases}	
	By direct computation, we deduce that if $n\geq 3$, $\varepsilon > 0$ is small enough and
	\begin{numcases}{}
			\alpha =  -\frac{2(p - P)}{n + 2},\\
			0 < p - P < \frac{n + 2}{n - 2},	
	\end{numcases}
	then $\mathbb K_1 > 2\varepsilon$. Besides we know that if $P = \varepsilon_0, T = \frac{9\varepsilon_0}{\alpha + p}$ where $\varepsilon_0 > 0$ is small , then
	
	\begin{align}
		&\alpha + p = \frac{np}{n + 2} + \frac{2P}{n + 2} \in (0, 4),\\
		&\mathbb K_2 = T > 0.
	\end{align}
	 At this time, we can solve the equations \eqref{0824young1} and \eqref{0824young2} :

	\begin{numcases}{}
		\frac{\alpha + 2p}{A} = p_1,\\
			\frac{\alpha - 2}{\alpha - A} = 2 = q_1.
	\end{numcases}
	We obtain that
	\begin{equation*}
		\begin{aligned}
			A &= \frac{\alpha + 2}{2},
		\end{aligned}
	\end{equation*}
	and
	\begin{equation*}
		\begin{aligned}
			\frac{1}{p_1} + \frac{1}{q_1} &= \frac{1}{2} + \frac{1}{2}\cdot\frac{\alpha + 2}{\alpha + 2p}.
		\end{aligned}
	\end{equation*}
	If $p > 1$ and $p - P < \frac{n + 2}{n - 2}$, then $\alpha + 2p > \alpha + 2 \geq 0$. So we have
	\begin{equation*}
		\frac{1}{p_1} + \frac{1}{q_1} < 1,
	\end{equation*}
	and
	\begin{align*}
		p_1 , q_1 > 0.
	\end{align*}
	Besides since $\alpha + 2p > 0$, then
	\begin{equation*}
		\begin{aligned}
			\frac{1}{p_1} + \frac{1}{q_1} > 1 - \frac{2}{n}\Leftrightarrow &\frac{\alpha + 2}{2(\alpha + 2p)} > \frac{1}{2} - \frac{2}{n},\\
			\Leftrightarrow & -\frac{p - P}{n + 2} + 1 > \left[-\frac{2(p - P)}{n + 2} + 2p\right]\left(\frac{1}{2} - \frac{2}{n}\right),\\
			\Leftrightarrow &1 > p\cdot \frac{n^2 - 2n - 4}{n(n + 2)} - \frac{4P}{n(n + 2)} .
		\end{aligned}
	\end{equation*}
	In fact, we have
	\begin{equation*}
		\begin{aligned}
			p\cdot \frac{n^2 - 2n - 4}{n(n + 2)} &\leq \frac{n + 2}{n - 2} \cdot \frac{n^2 - 2n - 4}{n(n + 2)}\\
			&= \frac{n^2 - 2n - 4}{n^2 - 2n} < 1.
		\end{aligned}
	\end{equation*}
	Thus when $|P|$ is small enough, we have
	\begin{equation*}
		\frac{1}{p_1} + \frac{1}{q_1} > 1 - \frac{2}{n}.
	\end{equation*}
		Now we can prove Theorem \ref{Theorem1}	:
	\begin{proof}[Proof of Theorem \ref{Theorem1}	]
		In this case, we choose
		 $\gamma = 0, S = \frac{1}{n}$. Since $1 < p \leq \frac{n + 2}{n - 2}$, we can choose $\alpha =  -\frac{2(p - P)}{n + 2}$, $P = \varepsilon_0$ and $T = \frac{9\varepsilon_0}{\alpha + p} > 2\varepsilon_0$ , then there exists $\varepsilon > 0$(depending on $\varepsilon_0$) small such that
		$\mathbb K_1 > 2\varepsilon$ and $\mathbb K_2 = T > 2\varepsilon_0$.
	In order to get the condition \eqref{condition08240930}, we only need:	
	\begin{equation}\label{condition08241010}
			\begin{aligned}
				&   \varepsilon_0 v^{\alpha + 2p}|\nabla v|^{\gamma} + (\mathbb K_3 - \varepsilon_0) M^2 v^{\alpha}|\nabla v|^{\gamma + 2q} \\
			&- 10\varepsilon_0 v^{\alpha + p - 1}|\nabla v|^{\gamma + 2} + \mathbb K_5 M v^{\alpha - 1}|\nabla v|^{\gamma + q + 2} + \mathbb K_6 M v^{\alpha + p}|\nabla v|^{\gamma + q} \geq 0.
			\end{aligned}
		\end{equation}
		Firstly we use the terms $ v^{\alpha + p}|\nabla v|^{\gamma + q}$ and $v^{\alpha  - 1}|\nabla v|^{\gamma + q + 2}$ to control the term $v^{\alpha + p - 1}|\nabla v|^{\gamma + 2}$.
	Let $\varepsilon_0 > 0$ be much smaller than $M$ such that
	\begin{align*}
		\frac{ 5\varepsilon_0 q}{M^{\frac{2}{q}}}\Big(\frac{2}{2 - q}\Big)^{-\frac{2 - q}{q}} < \varepsilon_0^{\frac{1}{2}},
	\end{align*}
	 then using the following Young inequality:
	\begin{equation*}
		\begin{aligned}
			v^{\alpha + p - 1}|\nabla v|^{\gamma + 2}\leq \frac{q}{2}\Big(M\cdot\frac{2}{2 - q}\Big)^{-\frac{2 - q}{q}}  v^{\alpha + p}|\nabla v|^{\gamma + q} +  M v^{\alpha  - 1}|\nabla v|^{\gamma + q + 2},
		\end{aligned}
	\end{equation*}
	with $(\frac{2}{q}, \frac{2}{2 - q})$, we get that
	\begin{align*}
		10\varepsilon_0 v^{\alpha + p - 1}|\nabla v|^{\gamma + 2} &\leq \frac{ 5\varepsilon_0 q}{M^{\frac{2}{q}}}\Big(\frac{2}{2 - q}\Big)^{-\frac{2 - q}{q}}  Mv^{\alpha + p}|\nabla v|^{\gamma + q} + 10\varepsilon_0 M v^{\alpha  - 1}|\nabla v|^{\gamma + q + 2}\\
		&\leq \varepsilon_0^{\frac{1}{2}}  Mv^{\alpha + p}|\nabla v|^{\gamma + q} + 10\varepsilon_0 M v^{\alpha  - 1}|\nabla v|^{\gamma + q + 2}.
	\end{align*}
	Then \eqref{condition08241010} becomes
	\begin{equation}\label{condition08241036}
			\begin{aligned}
				&\varepsilon_0 v^{\alpha + 2p}|\nabla v|^{\gamma} + (\mathbb K_3 - \varepsilon_0) M^2 v^{\alpha}|\nabla v|^{\gamma + 2q} \\
			& + (\mathbb K_5 - 10\varepsilon_0) M v^{\alpha - 1}|\nabla v|^{\gamma + q + 2} + (\mathbb K_6 - \varepsilon_0^{\frac{1}{2}} )  M v^{\alpha + p}|\nabla v|^{\gamma + q} \geq 0.
			\end{aligned}
		\end{equation}

	Next let us consider $\mathbb K_5$:
	\begin{equation*}
		\begin{aligned}
			\mathbb K_5 &=  \frac{n}{n + 2}p - U  \frac{\alpha }{1  + qS} + \frac{1}{2B_0} b_1 U\frac{q}{1 + qS}\\
			&= \Big[\frac{2p}{n + 2} - \frac{(n - 1)^2}{2n^2}q\cdot\frac{2n^2p}{(n - 1)(n + 2)} \Big]\frac{U}{1 + qS} + \frac{n}{n + 2}p + O(\varepsilon_0)\\
			&= \Big[  2 - (n - 1)q \Big]\frac{p}{n + 2}\cdot\frac{U}{1 + qS} +  \frac{n}{n + 2}p + O(\varepsilon_0) .\\
		\end{aligned}
	\end{equation*}
	Note that there always exists $U > 0$ such that $\mathbb K_3 > \varepsilon_0$. For $n \geq 3, q > 1$, we know
	\begin{equation*}
		 2 - (n - 1)q < 0.
	\end{equation*}
	For the easier case, we hope that there exists $\varepsilon_0 > 0$ small enough such that the following hold
	\begin{numcases}{}
		  \mathbb K_3 - \varepsilon_0 \geq 0,\\
			\mathbb K_5 - \varepsilon^{\frac{1}{2}}_0  \geq 0.
	\end{numcases}
	If so, then the condition \eqref{condition08241036} also holds for any $M > 0$. We find that if $3 \leq n < 5$, we can choose $U = 1 + \frac{q}{n}$; if $n = 5, 6$, we can choose $U = 1 + \frac{2q}{n}$. But for $n \geq 7$, the above conditions will not hold at the same time. As a result, we need to consider the case for $n \geq 7$
	
	\begin{numcases}{}
		  \mathbb K_3 \leq 0,\\
			\mathbb K_5 - \varepsilon^{\frac{1}{2}}_0   \geq 0.
	\end{numcases}
	We will use $v^{\alpha - 1}|\nabla v|^{\gamma + q + 2} $ and $v^{\alpha + p}|\nabla v|^{\gamma + q}$ to control the term $v^{\alpha}|\nabla v|^{\gamma + 2q}$.
	Note that by Young inequality, we have
		\begin{equation*}
		\begin{aligned}
			M^2v^{\alpha}|\nabla v|^{\gamma + 2q} \leq K M v^{\alpha - 1}|\nabla v|^{\gamma + q + 2} + \frac{\Big(K\cdot \frac{p + 1}{p}\Big)^{-p}}{p + 1} M^{p + 2} v^{\alpha + p}|\nabla v|^{\gamma + q},
		\end{aligned}
	\end{equation*}
	with $(\frac{p + 1}{p}, p + 1)$. We only need that
	\begin{numcases}{}
		\mathbb K_5 - 10\varepsilon_0 + K \mathbb K_3  = 0,\\
		\mathbb K_6 - \varepsilon^{\frac{1}{2}}_0 + \frac{\Big(K\cdot \frac{p + 1}{p}\Big)^{-p}}{p + 1} M^{p + 1} \mathbb K_3  > 0.
	\end{numcases}
	
	\begin{equation}\label{thm1_final}
		\begin{aligned}
			&\Rightarrow \mathbb K_6 - \varepsilon^{\frac{1}{2}}_0  - \frac{p^{p}}{(p + 1)^{p + 1}} M^{p + 1}(- \mathbb K_3)^{p + 1}(\mathbb K_5 - 10\varepsilon_0)^{-p} > 0.
		\end{aligned}
	\end{equation}
	We still choose $U = 1 + \frac{q}{n}$, then
	\begin{align*}
		\mathbb K_6 = \frac{q}{n} + \frac{10\varepsilon_0}{\alpha + p} > 0,
	\end{align*}
	so the condition \eqref{thm1_final} becomes:
	\begin{equation*}
		\begin{aligned}
			& \frac{q}{n} > \frac{p^{p}}{(p + 1)^{p + 1}} M^{p + 1}\Bigg[ \frac{(n - 1)^2q^2}{4n^2} - \frac{q}{n} \Bigg]^{p + 1} \Bigg[ p - \frac{(n - 1)pq}{n + 2} \Bigg]^{-p},\\
			\Leftrightarrow &M < \left(\frac{q}{n}\right)^{\frac{1}{p + 1}} \frac{p + 1}{p^{\frac{p}{p + 1}}}\Bigg[ \frac{(n - 1)^2q^2}{4n^2} - \frac{q}{n} \Bigg]^{- 1} \Bigg[ p - \frac{(n - 1)pq}{n + 2} \Bigg]^{\frac{p}{p + 1}}\\
			&=  (p + 1)\Bigg[ \frac{(n - 1)^2q}{4n} - 1 \Bigg]^{- 1} \Bigg[ \frac{n}{q} - \frac{n(n - 1)}{n + 2} \Bigg]^{\frac{q}{2}}\\
			&=:M_1.
		\end{aligned}
	\end{equation*}

	\end{proof}

	\section{Proof of Theorem \ref{Theorem2} }\label{Proof of Thm3}
	
	In this section we consider the case that $n \geq 7$ and $M$ is large enough. At this case, we can not directly choose $\gamma = 0$ any more.
	 Recall that in \eqref{0824young1} and \eqref{0824young2} :

	\begin{equation*}
		\begin{aligned}
			B &= \gamma \cdot \frac{\alpha + \gamma + 2}{(\gamma + 4)p + 2\alpha + \gamma},
		\end{aligned}
	\end{equation*}
	\begin{equation*}
		\begin{aligned}
			G(p) := \frac{1}{p_1} + \frac{1}{q_1} &= \frac{(\gamma + 2)p + 2\alpha + \gamma + 2}{(\gamma + 4)p + 2\alpha + \gamma}.
		\end{aligned}
	\end{equation*}
	We hope that
	\begin{equation*}
		\alpha + \gamma + 2 > 0,
	\end{equation*}
	then if $\gamma > 0$ we have
	\begin{align*}
		&(\gamma + 4)p + 2\alpha + \gamma > \gamma + 4 + 2\alpha + \gamma > 0,\\
		&B > 0\Rightarrow p_1 > 0,\\
		&B < \frac{\gamma}{2} < \gamma + 2 \Rightarrow q_1 > 0.
	\end{align*}
	Besides $G(p)$ is decreasing and
	\begin{equation*}
		G(p) < G(1) = 1.
	\end{equation*}
	Thus if we choose
	\begin{equation*}
		\begin{aligned}
			\gamma = n - 4,\\
			\alpha + \gamma + 2 > 0,
		\end{aligned}
	\end{equation*}
	then
	\begin{equation*}
		1 - \frac{2}{n} < G(p) < 1.
	\end{equation*}
	So in this section, we only need to show the condition \eqref{condition08240930} holds. When $\gamma \neq 0$, solving the equation
	\begin{equation*}
		b_2 = 0,
	\end{equation*}
	we get that
	\begin{align*}
		b_2 &=  \gamma + \frac{1 + \gamma S - \varepsilon \tau}{1 - S^2 + \gamma S - \gamma S^2 - (n - 1)Q^2}\Big(2\gamma S - \gamma + 2S - 2Q\Big) - 2\varepsilon (S - Q) ,\\
	\end{align*}
	\begin{equation*}
		\begin{aligned}
			\Rightarrow 0 &= \gamma - \gamma S^2 + \gamma^2S - \gamma^2S^2 - \frac{\gamma}{n - 1}(1 - S)^2 + (1 + \gamma S - \varepsilon \tau)\Big( 2\gamma S - \gamma + \frac{2nS}{n - 1} - \frac{2}{n - 1} \Big)\\
			&- 2\varepsilon\cdot \frac{nS - 1}{n - 1}\Big[ 1 - S^2 + \gamma S - \gamma S^2 - \frac{(1 - S)^2}{n - 1}\Big]\\
			&=  2\varepsilon\cdot \frac{n}{n - 1}\Big(    \gamma  + \frac{n}{n - 1}\Big)S^3 + ...
		\end{aligned}
	\end{equation*}
	Since $2\varepsilon\cdot \frac{n}{n - 1}\Big(    \gamma  + \frac{n}{n - 1}\Big) \neq 0$, the cubic equation on the right hand with respect to $S$ at least  has one solution $S = S(n, \gamma, \varepsilon)$. Next, we need to estimate the solution when $\varepsilon > 0$ is small enough. If $\varepsilon = 0$, then
	\begin{equation*}
		\begin{aligned}
			0 &= \gamma - \gamma S^2 + \gamma^2S - \gamma^2S^2 - \frac{\gamma}{n - 1}(1 - S)^2 + (1 + \gamma S )\Big( 2\gamma S - \gamma + \frac{2nS}{n - 1} - \frac{2}{n - 1} \Big)\\
			&= \Big( -\gamma - \gamma^2 - \frac{\gamma}{n - 1} + 2\gamma^2 + \frac{2n\gamma}{n - 1}\Big)S^2\\
			&+ \Big( \gamma^2 + \frac{2\gamma}{n - 1} - \gamma^2 - \frac{2\gamma}{n - 1} + 2\gamma + \frac{2n}{n - 1} \Big)S\\
			&+ \gamma - \frac{\gamma}{n - 1} - \gamma - \frac{2}{n - 1}\\
			&= \Big(  \gamma^2 + \frac{n\gamma}{n - 1}  \Big)S^2 + 2\Big(\gamma + \frac{n}{n - 1} \Big)S - \frac{\gamma + 2}{n - 1}.  \\
		\end{aligned}
	\end{equation*}
	Choose
	\begin{equation*}
		\begin{aligned}
			S &= \frac{-\gamma - \frac{n}{n - 1} + \sqrt{\Big(\gamma + \frac{n}{n - 1} \Big)^2 + \Big(  \gamma^2 + \frac{n\gamma}{n - 1}  \Big)\frac{\gamma + 2}{n - 1}}}{ \gamma^2 + \frac{n\gamma}{n - 1} }\\
			&= \frac{\gamma + 2}{(n - 1)\gamma + n + \sqrt{\Big[(n - 1)\gamma + n \Big]^2 + \Big[(n - 1)  \gamma^2 + n\gamma  \Big](\gamma + 2)}}.
		\end{aligned}
	\end{equation*}
	If $\gamma = n - 4$, then
	\begin{equation*}
		\begin{aligned}
			S &= \frac{n - 2}{(n - 1)(n - 4) + n + \sqrt{\Big[(n - 1)(n - 4) + n \Big]^2 + \Big[(n - 1)  (n - 4) + n  \Big](n - 2)(n - 4)}}\\
			&= \frac{n - 2}{(n - 2)^2 + \sqrt{(n - 2)^4 + (n - 2)^{3}(n - 4)}}\\
			&= \frac{1}{n - 2 + \sqrt{(n - 2)(2n - 6)}}\\
			&= \frac{\sqrt{(n - 2)(2n - 6)} - n + 2}{(n - 2)(n - 4)}.
		\end{aligned}
	\end{equation*}
	So when $\varepsilon > 0$ is small enough, we have
	\begin{align*}
		S = \frac{\sqrt{(n - 2)(2n - 6)} - n + 2}{(n - 2)(n - 4)} + E(\varepsilon),
	\end{align*}
	where $E(\varepsilon)$ tends to $0$ when $\varepsilon$ tends to $0$.
	We hope the condition \eqref{condition08240930} holds. In the following computation, by the continuity of $\varepsilon$, we only need to check the conditions for $\varepsilon = 0$.
	
	\begin{lemma}\label{lemma1}
		If $n \geq 7, \gamma = n - 4, \varepsilon = 0, \alpha = -\gamma - \frac{4}{n} ,  p - P = \frac{n + 2}{n - 2} - \frac{1}{n^2}$, then
		\begin{equation}
			\mathbb K_1 > 0.
		\end{equation}
		
	\end{lemma}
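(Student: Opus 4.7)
The plan is to reduce the claim to an explicit numerical inequality depending only on $n$, and then verify it by direct computation using the quadratic identity satisfied by $S$.

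The first step is an observation that eliminates $P$ entirely from $\mathbb{K}_1$. At $\varepsilon = 0$ both $a_3$ and $b_1$ depend on $p$ only through the linear combination $\alpha + p$, so the shifts
\begin{align*}
a_3 + P\cdot\frac{\alpha-1}{1+\gamma S+2S} &= \frac{\alpha-1}{1+\gamma S+2S}\left[\frac{\alpha(1-S)(1+\gamma S)}{D} - (\alpha + p - P)\right],\\
b_1 + P\cdot\frac{\gamma+2}{1+\gamma S+2S} &= \frac{\alpha(\gamma+3)(1+\gamma S)}{D(1+\gamma S+2S)} - \frac{(\alpha + p - P)(\gamma+2)}{1+\gamma S+2S},
\end{align*}
with $D := 1 - S^2 + \gamma S - \gamma S^2 - (n-1)Q^2$, involve $p$ and $P$ only through $\alpha + p - P$. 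By hypothesis $\alpha = -(n-4) - 4/n$ and $p - P = (n+2)/(n-2) - 1/n^2$, so this combination is a function of $n$ alone; and since $\gamma = n-4$, $S = S(n)$, and $Q = (1-S)/(n-1)$ are likewise functions of $n$, so is $\mathbb{K}_1$. The lemma is therefore equivalent to a scalar inequality in $n$.

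The second step gathers closed forms for the ingredients. The chosen value of $S$ satisfies the quadratic identity derived earlier in the excerpt, which with $\gamma = n-4$ collapses to $(n-4)(n-2)S^2 + 2(n-2)S - 1 = 0$. This identity lets one rationalize $D$ and simplify $B_0 = a_1[1 + \gamma + 1/(n-1)]$ to $B_0 = (1+\gamma S)(n-2)^2/[(n-1)D]$, using $(n-1)(1+\gamma) + 1 = (n-2)^2$; one also finds $1 + \gamma S + 2S = 1 + (n-2)S$. Substituting these into
\begin{equation*}
\mathbb{K}_1 = a_3 + P\cdot\frac{\alpha-1}{1+\gamma S+2S} - \frac{1}{4B_0}\left(b_1 + P\cdot\frac{\gamma+2}{1+\gamma S+2S}\right)^2,
\end{equation*}
multiplying through by $4B_0(1+\gamma S+2S)^2 > 0$, and using the quadratic identity for $S$ to eliminate the radical, reduces $\mathbb{K}_1 > 0$ to a polynomial inequality in $n$ valid for $n \geq 7$.

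The main obstacle will be the explicit algebra in this final reduction. The perturbation $-1/n^2$ in $p - P = (n+2)/(n-2) - 1/n^2$ is essential: at the exactly critical exponent $p - P = (n+2)/(n-2)$ the leading terms in $\mathbb{K}_1$ balance to zero, reflecting the scaling invariance of the critical equation, and the $-1/n^2$ correction supplies the strict positivity. I therefore expect the final polynomial inequality to factor through a quantity that vanishes at $1/n^2 = 0$, and the task will be to extract this factor cleanly and check the residual inequality for $n \geq 7$, either by direct factorization or by combining asymptotics at large $n$ with a finite check at $n = 7, 8, \ldots$ up to where the asymptotic regime begins.
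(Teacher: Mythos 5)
Your overall plan coincides with the paper's: use $b_2=0$ to get the closed form of $S$ (equivalently $a_1$) in terms of $\sqrt{(n-2)(2n-6)}$, observe that $\mathbb K_1$ depends on $p$ and $P$ only through $\alpha+p-P$ so that it becomes a function of $n$ alone, clear the positive denominator $4B_0(1+\gamma S+2S)^2$, and verify the resulting explicit inequality for $n\ge 7$. Your closed forms ($(n-4)(n-2)S^2+2(n-2)S-1=0$, $B_0=a_1(n-2)^2/(n-1)$, $1+\gamma S+2S=1+(n-2)S$) are all correct and are exactly what the paper uses. Two points, however, need correction.

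First, the quadratic identity for $S$ does not ``eliminate the radical'': it only reduces the expression to the form $A(n)+B(n)\sqrt{(n-2)(2n-6)}$ with $B(n)\neq 0$ (the paper arrives at $(n-1)n^4H=-88n^4+327n^3-251n^2+24n+4+8n(8n^2-9n+2)\sqrt{(n-2)(2n-6)}$). Since $B(n)>0$ here, one still has to insert a sharp enough lower bound for the radical, such as $\sqrt{(n-2)(2n-6)}>\sqrt{2}\bigl(n-\tfrac{5}{2}-\tfrac{1}{6(n-2)}\bigr)$, before obtaining a polynomial inequality; the leading coefficient of the result is $64\sqrt{2}-88\approx 2.5$, so the margin is thin and the bound on the radical cannot be sloppy.

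Second, and more substantively, your guiding heuristic is false: with $\alpha=-(n-2)^2/n$ the quantity $\mathbb K_1$ does \emph{not} vanish at the exact critical exponent. Writing $H$ for the normalized form of $\mathbb K_1$ as in the paper, a direct computation at $p-P=\frac{n+2}{n-2}$ gives
\begin{equation*}
\frac{n^2}{4}\,H\!\left(\frac{n+2}{n-2}\right)=16\sqrt{(n-2)(2n-6)}-24n+64=\frac{-8(n-4)^2}{2\sqrt{(n-2)(2n-6)}+3n-8}<0
\end{equation*}
for all $n\ge 7$ (it vanishes only at $n=4$). So the $-1/n^2$ shift is not a perturbation off a zero: it must overcome a genuinely negative value of order $1/n$, and the final inequality does not factor through a quantity vanishing at $1/n^2=0$. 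The ``direct factorization'' branch of your plan is therefore a dead end, and you are forced onto your fallback (explicit lower bound for the radical plus verification of the resulting quartic for all $n\ge7$), which is precisely the paper's Claim A.1 and does go through.
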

	\begin{proof}[Proof of Lemma \ref{lemma1}]
	
	Recall that
	\begin{align*}
		a_1 &= \frac{1 + \gamma S }{1 - S^2 + \gamma S - \gamma S^2 - (n - 1)Q^2},\\
			a_2 &=\frac{1 + \gamma S }{1 - S^2 + \gamma S - \gamma S^2 - (n - 1)Q^2}(1 + \gamma) ,\\
			a_3 &= - \Big( \alpha + p\Big)\frac{\alpha - 1}{1 + \gamma S + 2S} + \frac{1 + \gamma S }{1 - S^2 + \gamma S - \gamma S^2 - (n - 1)Q^2}\cdot\frac{\alpha(\alpha - 1)(1 - S)}{1 + \gamma S + 2S} ,\\
			b_1 &= \frac{1 + \gamma S }{1 - S^2 + \gamma S - \gamma S^2 - (n - 1)Q^2}\cdot  \frac{\alpha(\gamma + 3)}{1 + \gamma S + 2S} - \Big( \alpha + p\Big)\frac{\gamma + 2}{1 + \gamma S + 2S} ,\\
			b_2 &=  \gamma  + \frac{1 + \gamma S }{1 - S^2 + \gamma S - \gamma S^2 - (n - 1)Q^2}\Big(2\gamma S - \gamma + 2S - 2Q\Big) ,\\
			c_2 &= - \frac{ q }{\gamma + q},\\
			b_3 &= 0,\\
			b_4 &= - p - \frac{ q\alpha }{\gamma + q} ,\\
			b_5 &= - \frac{ q}{\gamma + q},\\
			B_0 &= a_1\Big( \frac{n}{n - 1} + \gamma \Big) .
	\end{align*}
		Besides using $b_2 \equiv 0$ we have
		\begin{equation*}
			\begin{aligned}
				a_1 &= -\frac{\gamma}{2\gamma S - \gamma + 2S - 2Q}\\
				&= -\frac{\gamma}{2\gamma S - \gamma + 2S -  \frac{2}{n - 1}(1 - S)}\\
				&= -\frac{\gamma}{2\gamma S - \gamma + \frac{2n}{n - 1} S -  \frac{2}{n - 1}}\\
				&= -\frac{(n - 4)(n - 1)}{2S(n - 2)^2 - (n - 2)(n - 3)}\\
				&= -\frac{(n - 4)(n - 1)}{2\frac{\sqrt{(n - 2)(2n - 6)} - n + 2}{n - 4}(n - 2) - (n - 2)(n - 3)}\\
				&= \frac{2\sqrt{(n - 2)(2n - 6)} + n^2 - 5n + 8}{(n - 1)(n - 2)}.
			\end{aligned}
		\end{equation*}
	In order to prove $\mathbb K_1 > 0$ , we can simplify the computation and it is equivalent to prove that $4 a_1\left(\frac{n}{n - 1} + \gamma\right)a_3 - b_1^2 > 0$ with $p = \frac{n + 2}{n - 2} - \frac{1}{n^2}$.
		\begin{align*}
			 &\mathbb K_1 = 4a_1\left(\frac{n}{n - 1} + \gamma\right)a_3 - b_1^2\\
				&= 4a_1\left(\frac{n}{n - 1} + \gamma\right)\Bigg\{ -\frac{p(\alpha - 1)}{1 + \gamma S + 2S} + \Big[a_1(1 - S) - 1\Big]\frac{\alpha(\alpha - 1)}{1 + \gamma S + 2S} \Bigg\}\\
				&-\Bigg\{ -\frac{(\gamma + 2)p}{1 + \gamma S + 2S} + \frac{\alpha}{1 + \gamma S + 2S}\Big[(\gamma + 3)a_1 - (\gamma + 2)\Big] \Bigg\}^2\\
				&= 4a_1\left(\frac{n}{n - 1} + n - 4\right)\Bigg\{ -\frac{p(\alpha - 1)}{1 + (n - 2) S} + \Big[a_1(1 - S) - 1\Big]\frac{\alpha(\alpha - 1)}{1 + (n - 2) S} \Bigg\}\\
				&-\Bigg\{ -\frac{(n - 2)p}{1 + (n - 2) S} + \frac{\alpha}{1 + (n - 2) S}\Big[(n - 1)a_1 - n + 2\Big] \Bigg\}^2.
		\end{align*}
		As a result, we know
		\begin{equation*}
			\begin{aligned}
				&\mathbb K_1 > 0, \\
				\Leftrightarrow & H:=  4 \Big[1 + (n - 2)S\Big]a_1\left(\frac{n}{n - 1} + n - 4\right)\Bigg\{ -p(\alpha - 1) + \Big[a_1(1 - S) - 1\Big]\alpha(\alpha - 1) \Bigg\}\\
				&-\Bigg\{ -(n - 2)p + \alpha\Big[(n - 1)a_1 - n + 2\Big] \Bigg\}^2 > 0.
			\end{aligned}
		\end{equation*}
		Using the facts that
		\begin{itemize}
			\item
			\begin{equation*}
			\begin{aligned}
				1 + (\gamma + 2)S = \frac{\sqrt{(n - 2)(2n - 6)} -2}{n - 4}.
			\end{aligned}
		\end{equation*}
		\item
		\begin{align*}
			&(n - 1)a_1 - n + 2\\
			&= \frac{2\sqrt{(n - 2)(2n - 6)} + n^2 - 5n + 8}{n - 2} - n + 2\\
			&= \frac{2\sqrt{(n - 2)(2n - 6)} - n + 4}{n - 2}.
		\end{align*}
		
		\item
		
		\begin{align*}
			&a_1 (1 - S) - 1 \\
			&= a_1 - 1 - a_1 S\\
			&= \frac{2\sqrt{(n - 2)(2n - 6)} + n^2 - 5n + 8}{(n - 1)(n - 2)} - 1 \\
			&- \frac{2\sqrt{(n - 2)(2n - 6)} + n^2 - 5n + 8}{(n - 1)(n - 2)}\cdot  \frac{\sqrt{(n - 2)(2n - 6)} - n + 2}{(n - 2)(n - 4)}\\
			&= \frac{2\sqrt{(n - 2)(2n - 6)} - 2n + 6}{(n - 1)(n - 2)} \\
			&- \frac{2\sqrt{(n - 2)(2n - 6)} + n^2 - 5n + 8}{(n - 1)(n - 2)}\cdot  \frac{\sqrt{(n - 2)(2n - 6)} - n + 2}{(n - 2)(n - 4)},
		\end{align*}
		
		\end{itemize}

		\begin{align*}
			\Rightarrow &a_1 (1 - S) - 1 \\
			&= \frac{1}{(n - 1)(n - 2)^2(n - 4)}\Bigg\{ \Big[2\sqrt{(n - 2)(2n - 6)} - 2n + 6\Big](n^2 - 6n + 8)\\
			&- \Big[ 2\sqrt{(n - 2)(2n - 6)} + n^2 - 5n + 8\Big] \cdot \Big[ \sqrt{(n - 2)(2n - 6)} - n + 2 \Big]  \Bigg\}\\
			&= \frac{1}{(n - 1)(n - 2)^2(n - 4)}\Bigg\{ (n^2 - 5n + 8)(n - 2) - 2(n - 2)(2n - 6)\\
			&- (2n - 6)(n^2 - 6n + 8) + \Big[ 2n^2 - 12n + 16 + 2n - 4 - n^2 + 5n - 8 \Big] \sqrt{(n - 2)(2n - 6)} \Bigg\}\\
			&= \frac{1}{(n - 1)(n - 2)^2(n - 4)}\Bigg\{ -(n - 1)(n - 2)(n - 4) + ( n^2 - 5n + 4 ) \sqrt{(n - 2)(2n - 6)} \Bigg\}\\
			&= \frac{1}{(n - 2)^2}\Big[ -(n - 2) +  \sqrt{(n - 2)(2n - 6)} \Big].\\
		\end{align*}

		Therefore, we obtain:
		\begin{align*}
			H &= 4  \frac{\sqrt{(n - 2)(2n - 6)} -2}{n - 4}\cdot \frac{2\sqrt{(n - 2)(2n - 6)} + n^2 - 5n + 8}{(n - 1)^2}\cdot(n - 2)\Bigg\{ -p(\alpha - 1) \\
			&+ \frac{1}{(n - 2)^2}\Big[ -(n - 2) +  \sqrt{(n - 2)(2n - 6)} \Big]\alpha(\alpha - 1) \Bigg\}\\
				&-\Bigg\{ -(n - 2)p + \alpha\frac{2\sqrt{(n - 2)(2n - 6)} - n + 4}{n - 2} \Bigg\}^2.\\
		\end{align*}

		If $\alpha = - n + 4 - \frac{4}{n}=-\frac{(n-2)^2}{n}$, then we find that
		\begin{align*}
			H(p) &= 4  \frac{\sqrt{(n - 2)(2n - 6)} -2}{n - 4}\cdot \frac{2\sqrt{(n - 2)(2n - 6)} + n^2 - 5n + 8}{(n - 1)^2}\cdot(n - 2)\Bigg\{ -p \\
			&+ \frac{1}{(n - 2)^2}\Big[ -(n - 2) +  \sqrt{(n - 2)(2n - 6)} \Big]\left(-n + 4 - \frac{4}{n}\right) \Bigg\}\left(-n + 3 - \frac{4}{n}\right) \\
				&-\Bigg\{ -(n - 2)p + \left(-n + 4 - \frac{4}{n}\right)\frac{2\sqrt{(n - 2)(2n - 6)} - n + 4}{n - 2} \Bigg\}^2.
		\end{align*}
		\begin{claim}\label{claim10}
		When $n \geq 7$, we have
		\begin{equation*}
			\begin{aligned}
				 H\left(\frac{n + 2}{n - 2} - \frac{1}{n^2}\right) > 0.
			\end{aligned}
		\end{equation*}
		\end{claim}

	\end{proof}

	\begin{lemma}\label{lemmaU0}
		If $n \geq 7, 1 < p \leq \frac{n + 2}{n - 2}$ and $U_2 > U_1 > 0$ are two solutions of
		\begin{equation}\label{lemmaEqu1}
			\mathbb K_3 := c_2 + U - \frac{1}{4B_0}\Big(U\frac{ \gamma + q}{1 + \gamma S + qS}\Big)^2 = 0.
		\end{equation}
		Define
		\begin{equation}
			U_0 :=  \frac{2B_0 (1 + \gamma S + qS)^2}{(\gamma + q)^2}\left[1 - \frac{1}{\sqrt{2}} \left(1 - \frac{2}{n}\right)\right],
		\end{equation}
		then when $U = U_0$ and $p - P = \frac{n + 2}{n - 2} - \frac{1}{n^2}$, we have
		\begin{equation}\label{lemma2Conclu1}
			U_1 < U_0 < U_2,
		\end{equation}
		and
		\begin{equation}
			\begin{aligned}
				&\mathbb K_3, \mathbb K_5 > 0.
			\end{aligned}
		\end{equation}
	\end{lemma}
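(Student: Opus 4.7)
The approach is to treat $\mathbb K_3$ as a concave quadratic in $U$ and exploit the symmetry of its roots about its vertex. Writing
\[
\mathbb K_3(U) = c_2 + U - A U^2, \qquad A := \frac{(\gamma+q)^2}{4B_0\,(1+\gamma S + qS)^2},
\]
with $c_2 = -q/(\gamma+q) < 0$ and $A > 0$, the two hypothesized positive roots $U_1 < U_2$ are symmetric about the vertex abscissa
\[
U_v \;=\; \frac{1}{2A} \;=\; \frac{2B_0\,(1+\gamma S + qS)^2}{(\gamma+q)^2}.
\]
Then $U_0 = U_v\bigl[1 - \tfrac{1}{\sqrt{2}}(1 - \tfrac{2}{n})\bigr]$ lies slightly to the left of $U_v$ (and is positive since $n \ge 7$). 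Since $\mathbb K_3$ is concave with real roots $U_1, U_2$, the two claims $U_1 < U_0 < U_2$ and $\mathbb K_3(U_0) > 0$ are equivalent, so I would reduce the first part of the lemma to proving the latter.

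For the key step, substituting $U_0 - U_v = -U_v \cdot \tfrac{1}{\sqrt{2}}(1 - \tfrac{2}{n})$ and $U_v = 1/(2A)$ into the vertex form $\mathbb K_3(U) = \mathbb K_3(U_v) - A(U - U_v)^2$ yields
\[
\mathbb K_3(U_0) \;=\; c_2 + \frac{1}{4A}\left[1 - \tfrac{1}{2}\Bigl(1 - \tfrac{2}{n}\Bigr)^2\right],
\]
so $\mathbb K_3(U_0) > 0$ is equivalent to
\[
\frac{B_0\,(1+\gamma S + qS)^2}{q(\gamma+q)}\left[1 - \tfrac{1}{2}\Bigl(1 - \tfrac{2}{n}\Bigr)^2\right] \;>\; 1.
\]
Using $\gamma = n - 4$, $q = 2p/(p+1)$, together with the explicit formula for $S = S(n)$ obtained as the root of $b_2 = 0$ in the proof of Lemma \ref{lemma1} and the identity $B_0 = a_1\bigl(\tfrac{n}{n-1} + \gamma\bigr)$, both sides reduce to algebraic expressions in $n$ and $p$; since the bracket exceeds $73/98$ throughout $n \ge 7$, a direct estimate finishes this step.

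For $\mathbb K_5 > 0$ I would substitute $U = U_0$, $\alpha = -\gamma - 4/n = -(n-2)^2/n$, $P = \varepsilon_0$ small, and $p - P = \tfrac{n+2}{n-2} - \tfrac{1}{n^2}$ into
\[
\mathbb K_5 \;=\; -b_4 - P - \frac{U\alpha}{1+\gamma S + qS} + \frac{U(\gamma+q)}{2B_0\,(1+\gamma S + qS)}\left(b_1 + P\,\frac{\gamma+2}{1+\gamma S + 2S}\right).
\]
At $P = 0$ the term $-b_4 = p + q\alpha/(\gamma+q)$ is positive at the critical exponent, and the term $-U_0\alpha/(1+\gamma S + qS)$ is positive since $\alpha < 0$ and $U_0 > 0$; the cross term is controlled by the closed-form expressions for $a_1$ and $b_1$ obtained in Lemma \ref{lemma1}. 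These relations collapse $\mathbb K_5$ at $U = U_0$ to a one-variable expression in $n$, whose positivity for $n \ge 7$ I would verify by direct estimation, and extend to small $\varepsilon_0 > 0$ by continuity.

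The main obstacle is this final algebraic verification. The factor $1 - \tfrac{1}{\sqrt{2}}(1 - \tfrac{2}{n})$ in the definition of $U_0$ is engineered precisely so that $\mathbb K_3(U_0)$ inherits the clean form $\tfrac{1}{4A}\bigl[1 - \tfrac{1}{2}(1-\tfrac{2}{n})^2\bigr] + c_2$, leaving enough margin inside $(U_1, U_2)$ while simultaneously keeping the linear-in-$U$ quantity $\mathbb K_5$ on the positive side at the critical exponent $p - P = \tfrac{n+2}{n-2} - 1/n^2$. Verifying that these two requirements are simultaneously compatible is the computational core of the lemma; once the substitutions above are made, the remaining estimates are long but mechanical.
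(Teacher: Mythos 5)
Your treatment of the first assertion is correct and is in substance the paper's own argument in different clothing. Writing $\mathbb K_3(U)=c_2+U-AU^2$ with vertex $U_v=1/(2A)=\tfrac{2B_0(1+\gamma S+qS)^2}{(\gamma+q)^2}$, the condition $\mathbb K_3(U_0)>0$ with $U_0=U_v\bigl[1-\tfrac{1}{\sqrt2}(1-\tfrac2n)\bigr]$ is exactly the statement that the discriminant $\Delta=1+4Ac_2$ of the quadratic exceeds $\tfrac12(1-\tfrac2n)^2=0.5-\tfrac2n+\tfrac{2}{n^2}$, which is precisely the bound the paper establishes (numerically for $n=7,8$ and via Claim \ref{claim1} for $n\ge 9$). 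But note that you have only \emph{restated} this inequality, not proved it: the bracket being $\ge 73/98$ is the easy side, while the hard side is a lower bound on $B_0(1+\gamma S+qS)^2/\bigl(q(\gamma+q)\bigr)$, which requires the explicit value $S=\frac{\sqrt{(n-2)(2n-6)}-n+2}{(n-2)(n-4)}$, the closed form of $a_1$, and the two-sided estimates $\sqrt2\bigl[n-\tfrac52-\tfrac{1}{6(n-2)}\bigr]<\sqrt{(n-2)(2n-6)}<\sqrt2\bigl[n-\tfrac52-\tfrac{1}{8(n-2)}\bigr]$. That verification is deferred in your write-up.

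The genuine gap is in the $\mathbb K_5>0$ part, and it is not merely that the computation is "long but mechanical": your qualitative picture of why the sign comes out right is wrong. You argue that $-b_4>0$ and $-U_0\alpha/(1+\gamma S+qS)>0$ at the critical exponent while "the cross term is controlled." In fact the cross term cancels these positive contributions to leading order: the paper shows $\lim_{n\to\infty} I\bigl(U_0,\tfrac{n+2}{n-2}-\tfrac{1}{n^2}\bigr)=0$ and ultimately $-\mathbb K_5=I<\tfrac{-1.456n^2+10.234n-5.86}{n^3}$, so $\mathbb K_5$ is positive only at order $1/n$. Extracting the sign of that $1/n$ coefficient is the entire content of the lemma; it forces the sharp expansions of $\mathbb D_1,\mathbb D_2,\mathbb D_3$ (Claims \ref{claim2}--\ref{claim6} and \ref{claim 9}), where the constants (e.g.\ $2\sqrt2+\tfrac{0.4\sqrt2}{n}$, $\tfrac{2.76}{n}+\tfrac{1.7}{n^2}$) must be tracked to second order in $1/n$ because first-order terms also nearly cancel. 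A proof that treats the cross term as an error to be absorbed would not close; one must exhibit the cancellation explicitly and bound the residue from below. A secondary, minor point: the continuity-in-$\varepsilon_0$ step you invoke is unnecessary, since $\mathbb K_5$ (as the paper's $I(U,p-P)$ makes explicit) depends on $P$ only through the fixed combination $p-P$, though $q=2p/(p+1)$ still varies and must be handled uniformly over $1<q\le 1+\tfrac2n$, as the paper does.
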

	If Lemma \ref{lemmaU0} holds and choose $0<T <<|P|$ small enough, then we get $\mathbb K_i - \varepsilon > 0$ for $i = 1, 2, 3, 5, 6$.
	Then for $1 < p < \frac{n + 2}{n - 2} - \frac{1}{n^2}$, we get that
	\begin{align*}
		P = p - \frac{n + 2}{n - 2} + \frac{1}{n^2} < 0.
	\end{align*}
	As a result, the condition \eqref{condition08240930} holds and we deduce that $|\nabla v| \equiv 0$. Therefore we only need to consider the case that $\frac{n + 2}{n - 2} - \frac{1}{n^2} \leq p \leq \frac{n + 2}{n - 2}$. At this time, $P \geq 0$ and we will give the lower bound of $M$.
%	If $n = 7$, then
%	\begin{align*}
%		S &= \frac{2\sqrt{10} - 5}{15},\\
%		a_1 &= \frac{2\sqrt{10} + 11}{15}.\\
%	\end{align*}
%	If $n = 8$, then
%	\begin{align*}
%		S &= \frac{\sqrt{15} - 3}{12},\\
%		a_1 &= \frac{2\sqrt{15} + 16}{21}.
%	\end{align*}
%	

	\begin{proof}[Proof of Lemma \ref{lemmaU0} ]
		Firstly, we can rewrite \eqref{lemmaEqu1} as:
		\begin{equation}
			\begin{aligned}
				&-\frac{q}{\gamma + q} + U - \frac{1}{4a_1\Big( \frac{n}{n - 1} + \gamma \Big) }\Big(U\frac{ \gamma + q}{1 + \gamma S + qS}\Big)^2 = 0.\\
			\end{aligned}
		\end{equation}
	We know that for the right hand of above identity with respect to $U$
	
	\begin{align*}
			\Delta &= 1 - \frac{q(\gamma + q)}{a_1 (\frac{n}{n - 1} + \gamma )(1 + \gamma S + qS)^2}\\
			&> 1 - \frac{q(\gamma + q)}{a_1 (\frac{n}{n - 1} + \gamma )(1 + \gamma S )^2}\\
			&\geq 1 - \frac{(1 + \frac{2}{n})(\gamma + 1 + \frac{2}{n})}{a_1 (\frac{n}{n - 1} + \gamma )(1 + \gamma S )^2}\\
			&= 1 - \frac{(1 + \frac{2}{n})(n - 3 + \frac{2}{n})(n - 2)^2(n - 1)}{ (\frac{n}{n - 1} + n - 4 )(2n - 6)\Big[ 2\sqrt{(n - 2)(2n - 6)} + n^2 - 5n + 8\Big] }.
		\end{align*}
		When $n = 7$, we have $\Delta > 0.284  > 0.5 - \frac{2}{n} + \frac{2}{n^2}$; when $n = 8$, we have $\Delta > 0.322 > 0.5 - \frac{2}{n} + \frac{2}{n^2}$.
		\begin{claim}\label{claim1}
		When $n \geq 9$, then
			\begin{align*}
				\Delta > 0.5 - \frac{2}{n} + \frac{2}{n^2} > 0.
			\end{align*}
		\end{claim}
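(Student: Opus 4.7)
The claim is a purely algebraic inequality in $n$ containing only one irrationality, namely $s:=\sqrt{(n-2)(2n-6)}$, so the natural plan is to clear denominators, isolate $s$, and square. Two preliminary identities simplify the coefficients appearing in $\Delta$:
$$\frac{n}{n-1}+n-4=\frac{(n-2)^{2}}{n-1},\qquad \Bigl(1+\tfrac{2}{n}\Bigr)\Bigl(n-3+\tfrac{2}{n}\Bigr)(n-2)^{2}(n-1)=\frac{(n+2)(n-1)^{2}(n-2)^{3}}{n^{2}},$$
together with $\tfrac12-\tfrac{2}{n}+\tfrac{2}{n^{2}}=\tfrac{(n-2)^{2}}{2n^{2}}$. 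Using these and multiplying the desired inequality $\Delta>\tfrac12-\tfrac{2}{n}+\tfrac{2}{n^{2}}$ by the positive quantity $2n^{2}D(n)$ (where $D(n)$ denotes the denominator of $N/D$ in the definition of $\Delta$), one reduces the claim to
$$\Phi(n):=(n-3)(n^{2}+4n-4)\bigl(2s+n^{2}-5n+8\bigr)-(n+2)(n-1)^{3}(n-2)>0.$$

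\textbf{Polynomial reduction.} Expanding the polynomial pieces gives
$$\Phi(n)=2(n-3)(n^{2}+4n-4)\,s-Q(n),\qquad Q(n):=n^{4}+12n^{3}-89n^{2}+176n-92,$$
and a direct check shows $Q(n)>0$ on $[9,\infty)$ (indeed $Q(9)=9592$ and $Q'(n)>0$ there). Hence both sides of the rearrangement $2(n-3)(n^{2}+4n-4)s>Q(n)$ are positive, and squaring transforms the claim into
$$P(n):=4(n-3)^{2}(n^{2}+4n-4)^{2}(2n^{2}-10n+12)-Q(n)^{2}>0.$$
A direct expansion yields
$$P(n)=7n^{8}-48n^{7}-246n^{6}+3056n^{5}-10889n^{4}+18880n^{3}-17400n^{2}+8192n-1552,$$
whose leading coefficient is $7>0$, so $P(n)>0$ for all sufficiently large $n$. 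The range $n\ge 9$ is then closed by verifying $P(9)>0$ directly (one computes $P(9)=62\,447\,360$) and bounding the subdominant terms against the leading one via coarse groupings such as $7n^{8}-48n^{7}=n^{7}(7n-48)\ge 15n^{7}$ for $n\ge 9$, and similar estimates for the remaining pieces.

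\textbf{Main obstacle.} The only real obstacle is the bookkeeping in the degree-$8$ expansion; there is no delicate analytic estimate, only a comparison of explicit polynomials with integer coefficients. A shorter route that avoids the squaring step is to replace $s$ by the rational lower bound
$$s\ge\sqrt{2}\,\bigl(n-\tfrac52\bigr)-\frac{1}{2\sqrt{2}\,(n-5/2)},$$
which follows from the elementary $\sqrt{1-x}\ge 1-x$ applied with $x=\tfrac14(n-5/2)^{-2}$ to $\sqrt{(n-5/2)^{2}-\tfrac14}$; this converts the claim into a single rational inequality in $n$ whose truth is visible once the numerator is written out as a polynomial and its coefficients inspected.
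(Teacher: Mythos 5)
Your reduction is correct and your polynomial arithmetic checks out: with $s=\sqrt{(n-2)(2n-6)}$ the claim is exactly equivalent to $\Phi(n)=(n-3)(n^{2}+4n-4)(2s+n^{2}-5n+8)-(n+2)(n-1)^{3}(n-2)>0$, your $Q(n)=n^{4}+12n^{3}-89n^{2}+176n-92$ and the degree-$8$ polynomial $P(n)$ are both expanded correctly, and $P(9)=62\,447\,360$ is right; the coarse groupings you gesture at do close the argument (for instance $15n^{2}-246n+3056$ has negative discriminant, so $15n^{7}-246n^{6}+3056n^{5}\ge 2057\,n^{5}$ for $n\ge 9$, which dominates $10889n^{4}$, and the tail $18880n^{3}-17400n^{2}+8192n-1552$ is termwise positive there). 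However, your route is genuinely different from the paper's. The paper never isolates and squares the radical: it first replaces the numerator $(1+\tfrac2n)(n-3+\tfrac2n)(n-2)^{2}(n-1)$ by the larger $(n^{2}-3n)(n-2)(n-1)$ and the bracket $2s+n^{2}-5n+8$ by the smaller $n^{2}-2.3n$ (via the crude bound $s>\sqrt2(n-3)$), which collapses the whole claim to the single cubic inequality $-1.7n^{3}+19.6n^{2}-51.6n+30.4<0$, settled by monotonicity of the derivative plus the value at $n=9$. What the paper's approach buys is low degree and short bookkeeping, at the price of having to guess lossy rational bounds that still leave enough room; what your approach buys is that it is lossless and entirely mechanical — no cleverness in choosing bounds, only a degree-$8$ expansion — and your closing remark about substituting the rational minorant $s\ge\sqrt2(n-\tfrac52)-\tfrac{1}{2\sqrt2(n-5/2)}$ is essentially the paper's strategy in disguise (the paper uses the slightly sharper $\sqrt{1-x}>1-\tfrac{x}{2}-\tfrac{x^{2}}{6}$ with $x=\tfrac1{n-2}$).
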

		Thus there always exists two solutions $U_{2} > U_{1} > 0$. Define
		
		\begin{align*}
			U_0 &:= \frac{2B_0 (1 + \gamma S + qS)^2}{(\gamma + q)^2}\left[1 - \frac{1}{\sqrt{2}} \left(1 - \frac{2}{n}\right)\right]\\
			&= \frac{2B_0 (1 + \gamma S + qS)^2}{(\gamma + q)^2}\left( \frac{2 - \sqrt{2}}{2} + \frac{\sqrt{2}}{n} \right),
		\end{align*}
		then $\mathbb K_3(U_0) > 0$  and $U_1 < U_0 < U_2$. Finally we will check that $\mathbb K_5 > 0$ when $U = U_0$.
		Define
		\begin{align*}
			I(U, p - P) &:= - \mathbb K_5 = b_4 + P +  \frac{U\alpha}{1 + \gamma S + qS}  - \frac{1}{2B_0} \Big( b_1 + P  \frac{\gamma + 2}{1 + \gamma S + 2S}\Big) U\frac{ \gamma + q}{1 + \gamma S + qS}\\
			&= - (p - P) - \frac{q\alpha}{\gamma + q} +  \frac{U\alpha}{1 + \gamma S + qS} \\
			& - \frac{1}{2B_0} \Big[ a_1\cdot  \frac{\alpha(\gamma + 3)}{1 + \gamma S + 2S} - \Big( \alpha + p - P\Big)\frac{\gamma + 2}{1 + \gamma S + 2S}\Big] U\frac{ \gamma + q}{1 + \gamma S + qS}\\
			&= \left[ \frac{U(\gamma + 2)(\gamma + q)}{2B_0(1 + \gamma S + 2S)(1 + \gamma S + qS)} - 1\right](p - P) - \frac{q\alpha}{\gamma + q} +  \frac{U\alpha}{1 + \gamma S + qS}  \\
			& - \frac{1}{2B_0} \Big[ a_1\cdot  \frac{\alpha(\gamma + 3)}{1 + \gamma S + 2S} - \frac{\alpha( \gamma + 2)}{1 + \gamma S + 2S}\Big] U\frac{ \gamma + q}{1 + \gamma S + qS}.
		\end{align*}
		If $p - P = \frac{n + 2}{n - 2} - \frac{1}{n^2}$,
		\begin{align*}
			I\left(U_{0}, \frac{n + 2}{n - 2} - \frac{1}{n^2}\right) &= \left[ \left( \frac{2 - \sqrt{2}}{2} + \frac{\sqrt{2}}{n} \right) \frac{(\gamma + 2)(1 + \gamma S + qS)}{(\gamma + q)(1 + \gamma S + 2S)} - 1\right]\left(\frac{n + 2}{n - 2} - \frac{1}{n^2}\right) - \frac{q\alpha}{\gamma + q} \\
			&+ \left( \frac{2 - \sqrt{2}}{2} + \frac{\sqrt{2}}{n} \right) \frac{2B_0 (1 + \gamma S + qS)\alpha}{(\gamma + q)^2}\\
			& -  \Big[ a_1\cdot  \frac{\alpha(\gamma + 3)}{1 + \gamma S + 2S} - \frac{\alpha( \gamma + 2)}{1 + \gamma S + 2S}\Big] \frac{1 + \gamma S + qS}{\gamma + q}\left( \frac{2 - \sqrt{2}}{2} + \frac{\sqrt{2}}{n} \right)\\
			&=  \left[ \left( \frac{2 - \sqrt{2}}{2} + \frac{\sqrt{2}}{n} \right)\underset{\mathbb D_1 }{ \frac{(\gamma + 2)(1 + \gamma S + qS)}{(\gamma + q)(1 + \gamma S + 2S)}} - 1\right]\left(\frac{n + 2}{n - 2} - \frac{1}{n^2}\right) \\
			&+ \frac{\alpha}{\gamma + q}\Bigg[- q + \left( \frac{2 - \sqrt{2}}{2} + \frac{\sqrt{2}}{n} \right) \underset{\mathbb D_2}{\frac{2B_0 (1 + \gamma S + qS)}{\gamma + q}}\\
			& - \underset{\mathbb D_3}{ \frac{2\sqrt{(n - 2)(2n - 6)} - n + 4}{n - 2}\cdot \frac{ 1 + \gamma S + qS}{1 + \gamma S + 2S}\left( \frac{2 - \sqrt{2}}{2} + \frac{\sqrt{2}}{n} \right)}\Bigg].
		\end{align*}
		Then we need to estimate $\mathbb D_1$, $\mathbb D_2$ and $\mathbb D_3$.
		Using the fact that if $x = \frac{1}{n - 2} \leq \frac{1}{5}$,
		\begin{equation*}
			\begin{aligned}
				& 1 - \frac{1}{2}x - \frac{1}{6}x^2 < \sqrt{1 - x} < 1 - \frac{1}{2}x - \frac{1}{8}x^2,\\
			 \Rightarrow& \sqrt{2}\Big[ n - \frac{5}{2} - \frac{1}{6(n - 2)}\Big] < \sqrt{(n - 2)(2n - 6)} = \sqrt{2}(n - 2)\sqrt{1 - \frac{1}{n - 2}} < \sqrt{2}\Big[ n - \frac{5}{2} - \frac{1}{8(n - 2)}\Big],
			\end{aligned}
		\end{equation*}
	 we get the following estimates:
	 \begin{itemize}
	 	\item $\mathbb D_1$:
	 	 Firstly, we know that
	 	 \begin{align*}
			\frac{(\gamma + 2)(1 + \gamma S + qS)}{(\gamma + q)(1 + \gamma S + 2S)} - 1 &= \frac{ 2 - q}{(\gamma + q)(1 + \gamma S + 2S)} \\
			&= \frac{2 - q}{n - 4 + q}\cdot \frac{n - 4}{\sqrt{(n - 2)(2n - 6)} - 2}\\
			&< \frac{2 - q}{n - 4 + q}\cdot \frac{n - 4}{\sqrt{2}(n - 2)\Big[ 1 - \frac{1}{2(n - 2)} - \frac{1}{6(n - 2)^2}\Big] - 2} \\
			&= \frac{2 - q}{n - 4 + q}\cdot \frac{n - 4}{\sqrt{2}\Big[ n - \frac{5}{2} - \frac{1}{6(n - 2)} - \sqrt{2}\Big] },\\
		\end{align*}
		and
		\begin{align*}
				&\left(-\frac{1}{2} + \frac{1}{n}\right)\left(\frac{n + 2}{n - 2} - \frac{1}{n^2}\right) \\
				&= -\frac{1}{2} - \frac{1}{n} + \frac{n - 2}{2n^3} <  -\frac{1}{2} - \frac{1}{n} + \frac{1}{2n^2}.\\
			\end{align*}
	 Besides, we have the following estimates:
		\begin{claim}\label{claim2}
			When $n \geq 7$, we have
			\begin{align*}
				\left( \frac{2 - \sqrt{2}}{2} + \frac{\sqrt{2}}{n} \right) \cdot \frac{n - 4}{\sqrt{2}\Big( n - \frac{5}{2} - \frac{1}{6(n - 2)}  - \sqrt{2}\Big) }\left(\frac{n + 2}{n - 2} - \frac{1}{n^2}\right) < \frac{\sqrt{2} - 1}{2} + \frac{3}{n}.
			\end{align*}
		\end{claim}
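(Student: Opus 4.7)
The plan is to reduce Claim 2 to an explicit scalar inequality and verify it by combining an asymptotic expansion with explicit remainder bounds. Write
\[
A(n)=\frac{2-\sqrt 2}{2}+\frac{\sqrt 2}{n},\quad B(n)=\frac{n-4}{\sqrt 2\bigl(n-\tfrac52-\tfrac{1}{6(n-2)}-\sqrt 2\bigr)},\quad C(n)=\frac{n+2}{n-2}-\frac{1}{n^2},
\]
so the claim asserts $A(n)B(n)C(n)<\frac{\sqrt 2-1}{2}+\frac{3}{n}$ for every integer $n\ge 7$. The first step is to verify the leading-order match: as $n\to\infty$, $A(n)\to\frac{2-\sqrt 2}{2}$, $B(n)\to\frac{1}{\sqrt 2}$, $C(n)\to 1$, and the product tends to $\frac{2-\sqrt 2}{2\sqrt 2}=\frac{\sqrt 2-1}{2}$, which agrees exactly with the constant term on the right. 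Hence the inequality is sharp in the limit and its content lies entirely in comparing the $1/n$ terms.

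Second, I would expand each factor in powers of $1/n$. A short computation gives
\[
A(n)B(n)C(n)=\frac{\sqrt 2-1}{2}+\frac{3(1+\sqrt 2)}{4n}+R(n),
\]
where $R(n)=O(1/n^2)$. Since $\frac{3(1+\sqrt 2)}{4}<3$, the RHS beats the LHS asymptotically with gap $\frac{3(3-\sqrt 2)}{4n}>0$. To convert this asymptotic gap into a uniform inequality for $n\ge 7$, I would expand $B(n)$ via $\frac{1}{1-x}=1+x+\frac{x^2}{1-x}$ with $x=\tfrac{1}{n}\bigl(\tfrac52+\sqrt 2+\tfrac{1}{6(n-2)}\bigr)$, so that $B(n)$ carries a closed-form error bounded by an explicit $C_0/n^2$ for $n\ge 7$; $C(n)$ is expanded exactly and $A(n)$ trivially. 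This yields an explicit bound for $|R(n)|$.

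Finally, the inequality reduces to $R(n)\le\frac{3(3-\sqrt 2)}{4n}$, i.e.\ $n|R(n)|\le\frac{3(3-\sqrt 2)}{4}\approx 1.19$, an elementary rational inequality in $n$. After clearing denominators this becomes a polynomial inequality in $n$ with $\sqrt 2$ as a fixed constant, which I can verify by separating the rational and $\sqrt 2$-components (using the $\mathbb Q$-linear independence of $1$ and $\sqrt 2$) and checking that each leading coefficient has the right sign, supplemented by a direct numerical check at the small cases $n=7,8,9$. The main obstacle is that the inequality is \emph{first-order sharp} in $1/n$, so any crude upper bound on $A$, $B$, or $C$ will wipe out the margin; the remedy is to treat the $n^{0}$ and $n^{-1}$ terms exactly and estimate only the $n^{-2}$ remainder. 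Once $n\ge 7$, one has $1/n\le 1/7$, which is small enough to absorb the explicit remainder bound into $\frac{3(3-\sqrt 2)}{4n}$, completing the argument.
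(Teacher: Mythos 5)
Your proposal is correct and follows the same basic method as the paper: an expansion in $1/n$ for $n\ge 7$, with the constant terms matching exactly and the decision made at order $1/n$. Your leading computations check out — the product does tend to $\frac{\sqrt2-1}{2}$, and the exact first-order coefficient is indeed $\frac{3(1+\sqrt2)}{4}\approx 1.81<3$. The paper's bookkeeping is different in a way worth noting: it first absorbs the $\sqrt2$ into the first factor (so it becomes $\frac{\sqrt2-1}{2}+\frac1n$), multiplies that exactly against the third factor, bounds the middle ratio by $1$ (since $n-4<n-\tfrac52-\tfrac1{30}-\sqrt2$), and then uses $\frac{1}{n-2}=\frac1n+\frac{2}{n(n-2)}$ together with $n-2\ge 5$ to push all second-order pieces into the $1/n$ coefficient, ending with $2\sqrt2-1+\tfrac{4\sqrt2}{5}\approx 2.96<3$. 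So the paper wins with a crude coefficient close to $3$, whereas you win with the exact coefficient $1.81$ plus a remainder estimate. One caveat about your route: you describe the direct check at $n=7,8,9$ as supplementary, but it is essentially mandatory. At $n=7$ the expansion parameter is roughly $(\tfrac52+\sqrt2)/7\approx 0.56$, so the tail beyond first order is large — numerically the left side is about $0.612$ against a first-order approximation of about $0.466$, i.e.\ $nR(7)\approx 1.02$ out of the available $\frac{3(3-\sqrt2)}{4}\approx 1.19$ — and a uniform bound $|R(n)|\le C_0/n^2$ must have $C_0$ in the narrow window between about $7.2$ and $8.3$. Also, your remark that the inequality is ``first-order sharp'' is not quite right (the first-order gap is about $40\%$ of the coefficient, which is why the paper's cruder grouping still succeeds); the genuine tightness is the size of the higher-order remainder at $n=7$.
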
		
		Then we get that
		\begin{align*}
			&\left[ \left( \frac{2 - \sqrt{2}}{2} + \frac{\sqrt{2}}{n} \right)\underset{\mathbb D_1}{ \frac{(\gamma + 2)(1 + \gamma S + qS)}{(\gamma + q)(1 + \gamma S + 2S)}} - 1\right]\left(\frac{n + 2}{n - 2} - \frac{1}{n^2}\right) \\
			&<\left\{ \left( \frac{2 - \sqrt{2}}{2} + \frac{\sqrt{2}}{n} \right) \left[1 + \frac{2 - q}{n - 4 + q}\cdot \frac{n - 4}{\sqrt{2}\Big( n - \frac{5}{2} - \frac{1}{6(n - 2)}  - \sqrt{2}\Big) }\right] - 1\right\}\left(\frac{n + 2}{n - 2} - \frac{1}{n^2}\right)\\
			&=   \left( \frac{2 - \sqrt{2}}{2} + \frac{\sqrt{2}}{n} \right) \frac{2 - q}{n - 4 + q}\cdot \frac{n - 4}{\sqrt{2}\Big( n - \frac{5}{2} - \frac{1}{6(n - 2)}  - \sqrt{2}\Big) }\left(\frac{n + 2}{n - 2} - \frac{1}{n^2}\right) \\
			&+ \left( -\frac{\sqrt{2}}{2} + \frac{\sqrt{2}}{n} \right)\left(\frac{n + 2}{n - 2} - \frac{1}{n^2}\right)\\
			&< -\frac{\sqrt{2}}{2} - \frac{\sqrt{2}}{n} + \frac{\sqrt{2}}{2n^2} + \frac{2 - q}{n - 4 + q}\Bigg( \frac{\sqrt{2} - 1}{2} + \frac{3}{n}  \Bigg).
		\end{align*}

		\item $\mathbb D_2$:
		
		\begin{align*}
			&\frac{2B_0 (1 + \gamma S + qS)}{\gamma + q} \\
			&= 2\left(\gamma + \frac{n}{n - 1}\right)a_1\left( \frac{1}{\gamma + q} + S\right)\\
			&= 2\left(n - 3 + \frac{1}{n - 1}\right)\frac{2\sqrt{(n - 2)(2n - 6)} + n^2 - 5n + 8}{(n - 1)(n - 2)}\left[ \frac{1}{n - 4 + q} + \frac{\sqrt{(n - 2)(2n - 6)} - n + 2}{(n - 2)(n - 4)}\right]\\
			&> 2\left(n - 3 + \frac{1}{n - 1}\right)\frac{2\sqrt{2}\Big[ n - \frac{5}{2} - \frac{1}{6(n - 2)}\Big] + n^2 - 5n + 8}{(n - 1)(n - 2)}\left[ \frac{1}{n - 4 + q} + \frac{\sqrt{2}\Big[ n - \frac{5}{2} - \frac{1}{6(n - 2)}\Big] - n + 2}{(n - 2)(n - 4)}\right]\\
			&\geq 2\left(n - 3 + \frac{1}{n - 1}\right)\frac{2\sqrt{2}\Big( n - \frac{38}{15} \Big) + n^2 - 5n + 8}{(n - 1)(n - 2)}\left[ \frac{1}{n - 4 + q} + \frac{\sqrt{2}\Big( n - \frac{38}{15} \Big) - n + 2}{(n - 2)(n - 4)}\right].\\
		\end{align*}
		Using the fact that when $n = 7$,
		\begin{align*}
			\frac{1}{n} + \frac{1}{n(n - 1)} = \frac{0.5}{n - 4},
		\end{align*}
		and when $n \geq 8$
		\begin{align*}
			\frac{1}{n} \geq \frac{0.5}{n - 4},
		\end{align*}
		we get that
		\begin{align*}
			\frac{1}{n - 4 + q} &\geq \frac{1}{n - 3 + \frac{2}{n}} = \frac{1}{n - 2}\cdot \frac{1}{1 - \frac{1}{n}} \\
			&= \frac{1}{n - 2}\cdot \left[1 + \frac{1}{n} + \frac{1}{n(n - 1)}\right] \\
			&\geq \frac{1}{n - 2}\cdot \left( 1 + \frac{0.5}{n - 4}\right) \\
			 &= \frac{1}{n - 2} + \frac{0.5}{(n - 2)(n - 4)},
		\end{align*}
		we get that:
		\begin{align*}
			 &\frac{2B_0 (1 + \gamma S + qS)}{\gamma + q} \\
			&> 2\left(n - 3 + \frac{1}{n - 1}\right)\frac{2\sqrt{2}\Big( n - \frac{38}{15} \Big) + n^2 - 5n + 8}{(n - 1)(n - 2)}\left[ \frac{1}{n - 2} + \frac{0.5}{(n - 2)(n - 4)} + \frac{\sqrt{2}\Big( n - \frac{38}{15} \Big) - n + 2}{(n - 2)(n - 4)}\right]\\
			&\geq 2\left(n - 3 + \frac{1}{n - 1}\right)\frac{2\sqrt{2}\Big( n - \frac{38}{15} \Big) + n^2 - 5n + 8}{(n - 1)(n - 2)^2(n - 4)}\left[ n - 3.5 + \sqrt{2}\Big( n - \frac{38}{15} \Big) - n + 2\right]\\
			&= 2\left(n - 3 + \frac{1}{n - 1}\right)\frac{2\sqrt{2}\Big( n - \frac{38}{15} \Big) + n^2 - 5n + 8}{(n - 1)(n - 2)^2(n - 4)}\left[ \sqrt{2}\Big( n - \frac{53}{21} \Big) -  1.5\right].
		\end{align*}
		\begin{claim}\label{claim3}
			When $n \geq 7$, we have
			\begin{align*}
				&2\left(n - 3 + \frac{1}{n - 1}\right)\frac{2\sqrt{2}\Big( n - \frac{38}{15} \Big) + n^2 - 5n + 8}{(n - 1)(n - 2)^2(n - 4)}\left[ \sqrt{2}\Big( n - \frac{38}{15} \Big) -  1.5\right]\\
			&> 2\sqrt{2} + \frac{0.4\sqrt{2}}{n}.
			\end{align*}
		\end{claim}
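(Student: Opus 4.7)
The plan hinges on a single algebraic simplification that lowers the degree of the resulting polynomial comparison from quintic to cubic. Observing that $2\bigl(n-3+\tfrac{1}{n-1}\bigr) = \tfrac{2(n-2)^2}{n-1}$, the factor $(n-2)^2$ in the denominator of the left-hand side cancels. Writing $a := n - \tfrac{38}{15}$ and $p_2 := n^2 - 5n + 8$, the claim reduces to
\[
\frac{2\bigl(p_2 + 2\sqrt{2}\,a\bigr)\bigl(\sqrt{2}\,a - \tfrac{3}{2}\bigr)}{(n-1)^2(n-4)} \;>\; 2\sqrt{2}\Bigl(1 + \tfrac{1}{5n}\Bigr).
\]

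Next I would expand $(p_2 + 2\sqrt{2}\,a)(\sqrt{2}\,a - \tfrac{3}{2}) = \bigl(4a^2 - \tfrac{3}{2}p_2\bigr) + \sqrt{2}\,a(p_2 - 3)$, which cleanly separates the numerator into a rational part and a $\sqrt{2}$-multiple of a rational part. Moving the $\sqrt{2}$-terms to the right-hand side and clearing denominators by multiplying through by the positive quantity $450\,n\,(n-1)^2(n-4)$, the inequality transforms into the explicit cubic
\[
(1125 - 780\sqrt{2})\,n^3 + (4440\sqrt{2} - 5745)\,n^2 + (6152 - 4710\sqrt{2})\,n + 360\sqrt{2} \;>\; 0.
\]

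I would then invoke the rigorous rational bounds $1.414 < \sqrt{2} < 1.415$, taking the upper bound wherever the coefficient of $\sqrt{2}$ is negative and the lower bound otherwise. Each coefficient then admits a concrete sign and lower estimate: the $n^3$-coefficient exceeds $21$, the $n^2$-coefficient exceeds $533$, the $n$-coefficient is at least $-513$, and the constant exceeds $509$. The quadratic $533\,n^2 - 513\,n + 509$ has negative discriminant and is therefore positive for every real $n$; adjoining the nonnegative $n^3$ contribution and the positive constant, the full cubic is positive on $[7,\infty)$, which is what we need.

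The main obstacle is the sharpness of the original inequality: both sides converge to $2\sqrt{2}$ as $n \to \infty$, so after clearing denominators one is comparing two terms of identical polynomial order. The initial cancellation $2(n-3+\tfrac{1}{n-1}) = \tfrac{2(n-2)^2}{n-1}$ is therefore not cosmetic: without it one obtains a quintic whose top two coefficients differ by a cancellation of the form $(\cdot) - \sqrt{2}(\cdot)$ with small margin, making the sign analysis far more delicate. Even after the simplification, the rational bounds on $\sqrt{2}$ must be tight enough to keep the leading coefficient $1125 - 780\sqrt{2}$ manifestly positive, though once one commits to $\sqrt{2} < 1.415$ this positivity is comfortable and the subsequent verification is routine.
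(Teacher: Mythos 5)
Your proposal is correct, and I verified the key computation: with $a=n-\tfrac{38}{15}$, $p_2=n^2-5n+8$, the identity $2\bigl(n-3+\tfrac{1}{n-1}\bigr)=\tfrac{2(n-2)^2}{n-1}$ does cancel the $(n-2)^2$, the expansion $(p_2+2\sqrt{2}a)(\sqrt{2}a-\tfrac32)=(4a^2-\tfrac32 p_2)+\sqrt{2}\,a(p_2-3)$ is right, and clearing denominators yields exactly the cubic $(1125-780\sqrt{2})n^3+(4440\sqrt{2}-5745)n^2+(6152-4710\sqrt{2})n+360\sqrt{2}$ (the leading $10\sqrt{2}n^4$ terms cancel as you predicted); your coefficient bounds from $1.414<\sqrt{2}<1.415$ and the negative discriminant $513^2-4\cdot533\cdot509<0$ then close the argument. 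The only blemish is bookkeeping: the multiplier producing that exact cubic is $225\,n(n-1)^2(n-4)$, not $450\,n(n-1)^2(n-4)$ (the latter gives twice the cubic), which is immaterial to the sign. Your route is genuinely different from the paper's: the paper never notices the $(n-2)^2$ cancellation and instead chains several decimal under-estimates (replacing $2\sqrt{2}(n-\tfrac{38}{15})+n^2-5n+8$ by $n^2-2.2n+0.83$, the factor $n-\tfrac{38}{15}-\tfrac{1.5}{\sqrt 2}$ by $n-3.6$, and $\tfrac{1}{n-1}$ by $\tfrac1n$), then compares a ratio of cubics with $\tfrac{n+0.2}{n}$ and reduces to the positivity of $0.95n^3-4.638n^2+8.084n-3.2$, which it only checks at $n=7$. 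Your version keeps everything exact until the final step, confines the irrationality to a single rational bounding of $\sqrt{2}$, and the discriminant argument gives positivity for all $n>0$ rather than relying on an implicit monotonicity claim for $n\ge 7$; the cost is a slightly heavier one-time expansion, which you have carried out correctly.
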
		
		Thus we obtain that
		\begin{align*}
			 &\frac{2B_0 (1 + \gamma S + qS)}{\gamma + q} > 2\sqrt{2} + \frac{0.4\sqrt{2}}{n}.
		\end{align*}
		\item $\mathbb D_3$:
		\begin{align*}
			\frac{1 + \gamma S + qS}{1 + \gamma S + 2S} - 1 &= -\frac{(2 - q)S}{1 + \gamma S + 2S}\\
			&= -\frac{2 - q}{n - 2}\cdot\frac{\sqrt{(n - 2)(2n - 6)} - n + 2}{\sqrt{(n - 2)(2n - 6)} - 2}\\
			&< -\frac{2 - q}{n - 2}\cdot \frac{ \sqrt{2}\Big[ n - \frac{5}{2} - \frac{1}{6(n - 2)}\Big] - n + 2}{ \sqrt{2}\Big[ n - \frac{5}{2} - \frac{1}{8(n - 2)}\Big] - 2}\\
			&<  -\frac{2 - q}{n - 2}\cdot \frac{ \frac{2 - \sqrt{2}}{2} n  - \frac{5}{2} - \frac{1}{6(n - 2)}  + \sqrt{2}}{n - \frac{5}{2} - \sqrt{2}}.\\
		\end{align*}
		
		\begin{align*}
			& \frac{2\sqrt{(n - 2)(2n - 6)} - n + 4}{n - 2}\cdot \frac{ 1 + \gamma S + qS}{1 + \gamma S + 2S}\left( \frac{2 - \sqrt{2}}{2} + \frac{\sqrt{2}}{n} \right)\\
			&< \frac{2\sqrt{(n - 2)(2n - 6)} - n + 4}{n - 2}\cdot \left( 1 -\frac{2 - q}{n - 2}\cdot \frac{ \frac{2 - \sqrt{2}}{2} n  - \frac{5}{2} - \frac{1}{6(n - 2)}  + \sqrt{2}}{n - \frac{5}{2} - \sqrt{2}} \right)\left( \frac{2 - \sqrt{2}}{2} + \frac{\sqrt{2}}{n} \right)\\
			&= \frac{2\sqrt{(n - 2)(2n - 6)} - n + 4}{n - 2}\cdot \left( \frac{2 - \sqrt{2}}{2} + \frac{\sqrt{2}}{n} \right) \\
			&- \frac{2 - q}{n - 2}\cdot \frac{ \frac{2 - \sqrt{2}}{2} n  - \frac{5}{2} - \frac{1}{6(n - 2)}  + \sqrt{2}}{n - \frac{5}{2} - \sqrt{2}}\cdot\frac{2\sqrt{(n - 2)(2n - 6)} - n + 4}{n - 2}\left( \frac{2 - \sqrt{2}}{2} + \frac{\sqrt{2}}{n} \right).
		\end{align*}
		
		\begin{claim}\label{claim4}
			When $n \geq 7$, we have
			\begin{align*}
				 \frac{2\sqrt{(n - 2)(2n - 6)} - n + 4}{n - 2}\cdot \left( \frac{2 - \sqrt{2}}{2} + \frac{\sqrt{2}}{n} \right) < (2\sqrt{2} - 1)\cdot  \frac{2 - \sqrt{2}}{2} + \frac{2.76}{n} + \frac{1.7}{n^2}.\\
			\end{align*}
			
		\end{claim}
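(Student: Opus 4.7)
The plan is to use the upper estimate for $\sqrt{(n-2)(2n-6)}$ recorded immediately before the claim, namely $\sqrt{(n-2)(2n-6)} < \sqrt{2}\bigl[n - \tfrac{5}{2} - \tfrac{1}{8(n-2)}\bigr]$, and reduce the assertion to an elementary inequality for a rational function of $n$ that can be verified by direct estimation.

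First I substitute this bound into $2\sqrt{(n-2)(2n-6)} - n + 4$, obtaining an upper bound $(2\sqrt{2}-1)n - 5\sqrt{2} + 4 - \tfrac{\sqrt{2}}{4(n-2)}$. Dividing by $n-2$ and using $\tfrac{n}{n-2} = 1 + \tfrac{2}{n-2}$ shows
\[
\frac{2\sqrt{(n-2)(2n-6)} - n + 4}{n-2} < (2\sqrt{2}-1) + \frac{2-\sqrt{2}}{n-2},
\]
where I have discarded the harmless negative term $-\tfrac{\sqrt{2}}{4(n-2)^2}$.

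Next I multiply this bound by $\tfrac{2-\sqrt{2}}{2} + \tfrac{\sqrt{2}}{n}$. The leading contribution is exactly $(2\sqrt{2}-1)\cdot\tfrac{2-\sqrt{2}}{2}$, matching the corresponding term on the right of the claim. Using the identities $(2\sqrt{2}-1)\sqrt{2} = 4-\sqrt{2}$, $\tfrac{(2-\sqrt{2})^2}{2} = 3-2\sqrt{2}$, and $\sqrt{2}(2-\sqrt{2}) = 2\sqrt{2}-2$, the claim reduces to showing
\[
\frac{4-\sqrt{2}}{n} + \frac{3-2\sqrt{2}}{n-2} + \frac{2\sqrt{2}-2}{n(n-2)} < \frac{2.76}{n} + \frac{1.7}{n^2}.
\]
Multiplying through by $n$ and applying $\tfrac{n}{n-2} = 1 + \tfrac{2}{n-2}$ reorganises this as $(7-3\sqrt{2}) + \tfrac{4-2\sqrt{2}}{n-2} < 2.76 + \tfrac{1.7}{n}$, equivalently
\[
h(n) := \bigl(2.76 - (7-3\sqrt{2})\bigr) + \frac{1.7}{n} - \frac{4-2\sqrt{2}}{n-2} > 0.
\]

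Finally I verify $h(n) > 0$ for all $n \geq 7$. Since $7 - 3\sqrt{2} \approx 2.7574$, the constant $2.76 - (7-3\sqrt{2}) \approx 0.00257$ is positive, so $h(n) \to 0.00257 > 0$ as $n \to \infty$. At the endpoint a direct computation gives $h(7) \approx 0.0112 > 0$. Solving $h'(n) = 0$, i.e.\ $\tfrac{1.7}{n^2} = \tfrac{4-2\sqrt{2}}{(n-2)^2}$, yields a unique critical point near $n \approx 11.76$; $h$ is monotone on either side of it, so the infimum of $h$ on $[7,\infty)$ is attained either at $n=7$ or in the limit, both of which are positive. The main obstacle will be the tightness of the numerical bookkeeping: since $2.76$ exceeds $7 - 3\sqrt{2}$ by only about $0.0026$, there is almost no slack in the constant, so one must retain enough precision in the estimate for $\sqrt{(n-2)(2n-6)}$ and check the boundary value $n=7$ by hand, rather than relying on a crude asymptotic argument.
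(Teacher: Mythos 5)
Your proposal is correct; every algebraic step checks out, and the final rational inequality does hold. The key identity chain is right: $(2\sqrt{2}-1)\sqrt{2}=4-\sqrt{2}$, $\tfrac{(2-\sqrt{2})^2}{2}=3-2\sqrt{2}$, $\sqrt{2}(2-\sqrt{2})=2\sqrt{2}-2$, and the $\tfrac{1}{n-2}$ coefficients collapse to $2-\sqrt{2}$ and then to $4-2\sqrt{2}$ exactly as you state, so the claim reduces to $(7-3\sqrt{2})+\tfrac{4-2\sqrt{2}}{n-2}<2.76+\tfrac{1.7}{n}$. Your approach uses the same key ingredient as the paper — the quadratic Taylor upper bound $\sqrt{(n-2)(2n-6)}<\sqrt{2}\bigl[n-\tfrac52-\tfrac{1}{8(n-2)}\bigr]$ — but executes it more cleanly: the paper checks $n=7$ and $n=8$ numerically and only runs the expansion for $n\ge 9$, rewriting $\tfrac{1}{n-2}$ as a truncated series in $\tfrac1n$ and tracking a long list of numerical coefficients (ending at $\tfrac{7-3\sqrt2}{n}+\tfrac{0.91+0.545\sqrt2}{n^2}$), whereas you keep $\tfrac{1}{n-2}$ exact, which eliminates the case split and yields a one-variable rational inequality valid uniformly for $n\ge7$. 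Two small remarks: your last step can be shortened — combining the two fractions gives $\tfrac{1.7}{n}-\tfrac{4-2\sqrt2}{n-2}=\tfrac{(2\sqrt2-2.3)n-3.4}{n(n-2)}>0$ for $n\ge7$ since $2\sqrt2-2.3\approx0.528$, so $h(n)>2.76-(7-3\sqrt2)>0$ without any critical-point discussion; and you correctly flag that the slack in the constant is only $2.76-(7-3\sqrt2)\approx 0.0026$, which is why discarding the $-\tfrac{\sqrt2}{4(n-2)^2}$ term is safe but almost nothing cruder would be.
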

				
		\begin{claim}\label{claim5}
			When $n \geq 7$, we have
			\begin{align*}
				&\frac{1}{n - 2}\cdot \frac{ \frac{2 - \sqrt{2}}{2} n  - \frac{5}{2} - \frac{1}{6(n - 2)}  + \sqrt{2}}{n - \frac{5}{2} - \sqrt{2}}\cdot\frac{2\sqrt{(n - 2)(2n - 6)} - n + 4}{n - 2}\left( \frac{2 - \sqrt{2}}{2} + \frac{\sqrt{2}}{n} \right)\\
				&> \frac{8\sqrt{2} - 11}{2}\left( \frac{1}{n} + \frac{7}{n^2}\right) .
			\end{align*}
		\end{claim}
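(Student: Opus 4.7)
The plan is to reduce Claim \ref{claim5} to an elementary polynomial inequality in $n$ that can then be verified for $n \geq 7$. The key observation is that both sides of the claim are of order $\frac{1}{n}$ as $n \to \infty$ with the \emph{same} leading coefficient: the three factors appearing in the LHS tend respectively to $\frac{2-\sqrt{2}}{2}$, $2\sqrt{2}-1$, and $\frac{2-\sqrt{2}}{2}$, whose product is exactly $\frac{8\sqrt{2}-11}{2}$. Thus, writing the RHS as $\frac{8\sqrt{2}-11}{2}\cdot\frac{n+7}{n^2}$, the claim is equivalent to showing that the finite-$n$ corrections in the LHS outweigh the factor $\frac{(n-2)(n+7)}{n^2} = 1 + \frac{5}{n} - \frac{14}{n^2}$ implicit on the right.

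\textbf{Execution.} First I would apply the lower bound $\sqrt{(n-2)(2n-6)} \geq \sqrt{2}\left[n - \frac{5}{2} - \frac{1}{6(n-2)}\right]$ already used in the paper (valid for $n \geq 7$ via $\sqrt{1-x} \geq 1 - x/2 - x^2/6$ with $x = 1/(n-2)$) to convert the middle factor on the LHS into an explicit rational function of $n$. Clearing denominators by multiplying both sides by the common positive quantity $n^2(n-2)^3(n-\frac{5}{2}-\sqrt{2})$ reduces the claim to the form $P_0(n) + \sqrt{2}\, P_1(n) > 0$ with $P_0, P_1$ explicit polynomials having rational coefficients. A short asymptotic analysis shows that the three $O(1/n)$ relative corrections of the three factors — namely $\frac{\sqrt{2}-1}{2n}$, $\frac{3\sqrt{2}-2}{7(n-2)}$, and $\frac{2\sqrt{2}+2}{n}$ — sum to $\frac{41\sqrt{2}+17}{14n} + O(1/n^2) \approx \frac{5.356}{n}$, which beats $\frac{5}{n}$ by a definite positive margin; this guarantees positivity of the leading coefficient of $P_0 + \sqrt{2}P_1$, so the inequality is automatic for $n$ large, and I would finish by direct verification at $n = 7, 8, 9, 10$ using tight numerical bounds on $\sqrt{2}$.

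\textbf{Main obstacle.} The principal difficulty is the tightness of the margin: since both sides agree to leading order, success hinges on faithfully tracking \emph{all} next-to-leading corrections, including the nuisance $-\frac{1}{6(n-2)}$ perturbation in the first factor of the LHS. A cleaner alternative, and the one I would ultimately adopt to match the style of Claims \ref{claim2}--\ref{claim4}, is to bound each of the three factors on the LHS from below by a simple explicit expression in $1/n$, multiply the three bounds, and compare the result term-by-term against $\frac{8\sqrt{2}-11}{2}\left(\frac{1}{n}+\frac{7}{n^2}\right)$. Each such pointwise bound reduces to an inequality of the form ``a rational function of $n$ with $\sqrt{2}$ coefficients is positive for $n \geq 7$,'' which can be established by clearing denominators and checking the base case $n = 7$ together with a sign argument for the derivative. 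The overall computation is routine but lengthy, matching the style and spirit of the preceding claims.
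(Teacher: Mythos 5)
Your proposal is correct and follows essentially the same route as the paper: the paper also handles $n=7,8$ by direct numerical evaluation and, for $n\ge 9$, bounds each of the three factors from below by an explicit rational expression (using the same $\sqrt{(n-2)(2n-6)}\ge\sqrt{2}\bigl[n-\tfrac52-\tfrac{1}{6(n-2)}\bigr]$ estimate), multiplies them, and reduces the claim to a polynomial inequality in $n$. Your leading-order bookkeeping checks out — the product of the limiting factors is indeed $\tfrac{8\sqrt2-11}{2}$ and the net relative correction $\tfrac{41\sqrt2-53}{14n}\approx\tfrac{0.356}{n}$ is the genuine (small) margin the paper's computation exploits.
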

		Hence we get that
		\begin{align*}
			& \frac{2\sqrt{(n - 2)(2n - 6)} - n + 4}{n - 2}\cdot \frac{ 1 + \gamma S + qS}{1 + \gamma S + 2S}\left( \frac{2 - \sqrt{2}}{2} + \frac{\sqrt{2}}{n} \right)\\
			&< (2\sqrt{2} - 1)\cdot  \frac{2 - \sqrt{2}}{2} + \frac{2.76}{n} + \frac{1.7}{n^2}\\
			& - (2 - q)\frac{8\sqrt{2} - 11}{2}\left( \frac{1}{n} + \frac{7}{n^2}\right) .
		\end{align*}

	 \end{itemize}

		By direct computation, we get that
		\begin{align*}
			\lim_{n \rightarrow \infty} I\left(U_{0}, \frac{n + 2}{n - 2} - \frac{1}{n^2}\right) = 0.
		\end{align*}
		So we should pay more attention to the terms $O(\frac{1}{n}). $
		\begin{align*}
			\Rightarrow &I\left(U_{0}, \frac{n + 2}{n - 2} - \frac{1}{n^2}\right) \\
			&\leq -\frac{\sqrt{2}}{2} - \frac{\sqrt{2}}{n} + \frac{\sqrt{2}}{2n^2} + \frac{2 - q}{n - 4 + q}\Bigg( \frac{\sqrt{2} - 1}{2} + \frac{3}{n}  \Bigg) \\
			&+ \frac{- n + 4 - \frac{4}{n}}{n - 4 + q}\Bigg\{ -q + \left( \frac{2 - \sqrt{2}}{2} + \frac{\sqrt{2}}{n} \right) \cdot \Bigg(  2\sqrt{2} + \frac{0.4\sqrt{2}}{n} \Bigg)\\
			&-(2\sqrt{2} - 1)\cdot  \frac{2 - \sqrt{2}}{2} - \frac{2.76}{n} - \frac{1.7}{n^2} + (2 - q)\frac{8\sqrt{2} - 11}{2}\left( \frac{1}{n} + \frac{7}{n^2}\right)\Bigg\}\\
			&\leq -\frac{\sqrt{2}}{2} - \frac{\sqrt{2}}{n} + \frac{\sqrt{2}}{2n^2} + \frac{2 - q}{n - 3}\Bigg( \frac{\sqrt{2} - 1}{2} + \frac{3}{n}  \Bigg)  \\
			&- \frac{ n - 4 + \frac{4}{n}}{n - 4 + q}\Bigg\{ -q + \left( \frac{2 - \sqrt{2}}{2} + \frac{\sqrt{2}}{n} \right) \cdot \Bigg(  2\sqrt{2} + \frac{0.4\sqrt{2}}{n} \Bigg)\\
			&+ 3 - 2.5\sqrt{2} - \frac{2.76}{n} - \frac{1.7}{n^2} + (2 - q)\frac{8\sqrt{2} - 11}{2}\left( \frac{1}{n} + \frac{7}{n^2}\right) \Bigg\},\\	
		\end{align*}
		Thus for $n \geq 7$, we have:
		\begin{equation}\label{n>7}
			\begin{aligned}
				\Rightarrow &I\left(U_{0}, \frac{n + 2}{n - 2} - \frac{1}{n^2}\right) \\
			&<  -\frac{\sqrt{2}}{2} - \frac{\sqrt{2}}{n} + \frac{\sqrt{2}}{2n^2} + \frac{2 - q}{n - 3}\Bigg( \frac{\sqrt{2} - 1}{2} + \frac{3}{n}  \Bigg)  \\
			&- \frac{ n - 4 + \frac{4}{n}}{n - 4 + q}\Bigg[ 1 - q - 0.5\sqrt{2} + \frac{0.84 + 0.4\sqrt{2}}{n} - \frac{0.9}{n^2}   + (2 - q)\frac{8\sqrt{2} - 11}{2}\left( \frac{1}{n} + \frac{7}{n^2}\right)   \Bigg].\\
			\end{aligned}
		\end{equation}

		\begin{itemize}
			
			\item
			When $n \geq 9$, we have
		\begin{align*}
				&1 - q - 0.5\sqrt{2} + \frac{0.84 + 0.4\sqrt{2}}{n} - \frac{0.9}{n^2} + (2 - q)\frac{8\sqrt{2} - 11}{2}\left( \frac{1}{n} + \frac{7}{n^2}\right) \\
				&< - 0.5\sqrt{2} + \frac{0.84 + 0.4\sqrt{2}}{n} - \frac{0.9}{n^2}  + \frac{8\sqrt{2} - 11}{2}\left( \frac{1}{n} + \frac{7}{n^2}\right) \\
				&= - 0.5\sqrt{2} + \frac{4.4\sqrt{2} - 4.66}{n} + \frac{28\sqrt{2} - 39.4}{n^2}   \\
				&\leq - 0.5\sqrt{2} + \frac{4.4\sqrt{2} - 4.66}{7} + \frac{28\sqrt{2} - 39.4}{49}  < 0 . \\
			\end{align*}
			
			\item When $n\geq 9$
			\begin{equation*}
				\begin{aligned}
					\frac{1}{n - 3} = \frac{1}{n}\left(1 + \frac{3}{n} + \frac{9}{n^2}\cdot\frac{1}{1 - \frac{3}{n}}\right) \leq \frac{1}{n}\left(1 + \frac{3}{n} + \frac{1}{n}\cdot\frac{9}{6}\right) = \frac{1}{n} + \frac{4.5}{n^2}.
				\end{aligned}
			\end{equation*}

		\end{itemize}
		\begin{claim}\label{claim6}
			When $n \geq 9$, we have
		
		\begin{align*}
				\frac{ n - 4 + \frac{4}{n}}{n - 4 + q} &<  1 - \frac{q}{n} + \frac{1.5q^2 - 8q + 12}{n^2}.
			\end{align*}

		\end{claim}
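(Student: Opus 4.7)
The plan is to clear denominators, identify a hidden factorization, and then reduce the claim to an elementary monotonicity fact about a one-variable rational function. Because $n \geq 9$ and $q > 1$, the denominator $n^2(n - 4 + q)$ is positive, so I would multiply both sides by it and rearrange. After canceling the $qn^2$ terms, the inequality becomes
\begin{align*}
n(q-2)^2 < (1.5q^2 - 8q + 12)(n - 4 + q).
\end{align*}

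Next, I would split the right-hand side as $n \cdot (1.5q^2 - 8q + 12) + (q-4)(1.5q^2 - 8q + 12)$ and use the key algebraic identity
\begin{align*}
1.5q^2 - 8q + 12 - (q-2)^2 = 0.5q^2 - 4q + 8 = 0.5(q-4)^2,
\end{align*}
which is the engine behind the whole estimate. Substituting, the difference of the two sides factors as
\begin{align*}
0.5n(q-4)^2 + (q-4)(1.5q^2 - 8q + 12) = (q-4)\bigl[0.5n(q-4) + 1.5q^2 - 8q + 12\bigr].
\end{align*}
Since $q = 2p/(p+1)$ with $1 < p \le (n+2)/(n-2)$, one has $q \in (1, (n+2)/n]$, so in particular $q < 4$ and $(q-4) < 0$. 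The factored expression is therefore positive if and only if
\begin{align*}
n > \frac{3q^2 - 16q + 24}{4 - q}.
\end{align*}

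Finally, I would dispose of this bound by examining $g(q) := (3q^2 - 16q + 24)/(4 - q)$. A direct differentiation gives $g'(q) = (-3q^2 + 24q - 40)/(4-q)^2$, whose zeros lie at $q = 4 \pm 2\sqrt{6}/3$, so $g$ is decreasing on $(1, 4 - 2\sqrt{6}/3)$. Since $q \in (1, 1 + 2/n] \subset (1, 11/9]$ for $n \geq 9$, the supremum of $g$ on this interval is the one-sided limit $g(1^+) = 11/3$. The required inequality $n > 11/3$ is comfortably satisfied for $n \geq 9$, completing the proof of Claim~\ref{claim6}.

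The only delicate step is spotting the identity that turns $1.5q^2 - 8q + 12 - (q-2)^2$ into a perfect square $0.5(q-4)^2$; after that, the argument reduces to a routine one-variable monotonicity check and the hypothesis $n \geq 9$ is used only to keep $q$ well below the critical point of $g$. The constants $1.5$, $8$, $12$ appearing in the statement are precisely tuned so that this factorization works, which explains why the bound in Claim~\ref{claim6} is sharp to leading order in $1/n$.
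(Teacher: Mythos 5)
Your proof is correct, and it takes a genuinely different route from the paper's. The paper writes
\begin{equation*}
\frac{n-4+\tfrac{4}{n}}{n-4+q}=\Bigl(1-\tfrac{4}{n}+\tfrac{4}{n^2}\Bigr)\sum_{k\ge 0}\Bigl(\tfrac{4-q}{n}\Bigr)^k,
\end{equation*}
bounds the tail of the geometric series by $\frac{(4-q)^2}{n^2}\cdot\frac{1}{1-(4-q)/n}\le\frac{1.5(4-q)^2}{n^2}$ using $4-q\le 3$ and $n\ge 9$ (this is precisely where the constant $1.5$ and the hypothesis $n\ge 9$ enter), then multiplies out, notes that the $1/n^2$ coefficient $1.5(4-q)^2-12+4q$ equals $1.5q^2-8q+12$ exactly, and checks that the leftover $1/n^3$ and $1/n^4$ terms are negative. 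You instead clear denominators, exploit the identity $1.5q^2-8q+12-(q-2)^2=0.5(q-4)^2$ to factor the difference of the two sides as $(q-4)\bigl[0.5n(q-4)+1.5q^2-8q+12\bigr]$, and reduce the claim to $n>g(q)$ with $g(q)=(3q^2-16q+24)/(4-q)$, disposed of by a monotonicity check giving $g(q)<g(1)=11/3$. I verified the algebra: the cleared-denominator form $n(q-2)^2<(n-4+q)(1.5q^2-8q+12)$, the perfect-square identity, the factorization, and $g'(q)=(-3q^2+24q-40)/(4-q)^2$ are all right, and $n-4+q>0$ justifies the initial multiplication. Your argument buys exactness (no series truncation) and reveals that $n\ge 9$ is far from necessary --- the same computation works for any $n\ge 4$ with $q\in(1,2)$ --- whereas the paper's expansion is more mechanical but uniform in style with the neighboring claims. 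One small caveat on your closing remark: the bound is sharp only through the $1/n$ term; the true $1/n^2$ coefficient of the left side is $(q-2)^2$, which is smaller than $1.5q^2-8q+12$ by the margin $0.5(q-4)^2$ that your factorization isolates.
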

		
		Noting that when $n \geq 9$ and $1 < q \leq 1 + \frac{2}{n}\leq \frac{11}{9}$, we have that
		\begin{align*}
			\Rightarrow &I\left(U_{0}, \frac{n + 2}{n - 2} - \frac{1}{n^2}\right) \\
			&\leq  -\frac{\sqrt{2}}{2} - \frac{\sqrt{2}}{n} + \frac{\sqrt{2}}{2n^2} + (2 - q)\left( \frac{1}{n} + \frac{4.5}{n^2} \right)\Bigg( \frac{\sqrt{2} - 1}{2} + \frac{3}{n}  \Bigg)  \\
			&- \left( 1 - \frac{q}{n} + \frac{1.5q^2 - 8q + 12}{n^2}\right)\Bigg[1 - q - 0.5\sqrt{2} + \frac{0.84 + 0.4\sqrt{2}}{n} - \frac{0.9}{n^2}  \\
			&+ (2 - q)\frac{8\sqrt{2} - 11}{2}\left( \frac{1}{n} + \frac{7}{n^2}\right)  \Bigg]\\
			&= \Bigg[ -\sqrt{2} + \frac{\sqrt{2} - 1}{2}(2 - q) + n(q - 1) - 0.84 - 0.4\sqrt{2} -  (2 - q)\frac{8\sqrt{2} - 11}{2} - 0.5\sqrt{2}q \Bigg] \frac{1}{n}\\
			&+ \Bigg[ \frac{\sqrt{2}}{2} + 3(2 - q) + \frac{9\sqrt{2} - 9}{4}(2 - q) + 0.9 - (2 - q)\frac{56\sqrt{2} - 77}{2} + nq(1 - q) \\
			&+ q(0.84 + 0.4\sqrt{2}) + q(2 - q)\frac{8\sqrt{2} - 11}{2} + 0.5\sqrt{2}(1.5q^2 - 8q + 12) \Bigg] \frac{1}{n^2}\\
			&+ \Bigg[ 13.5(2 - q) - 0.9q + q(2 - q)\frac{56\sqrt{2} - 77}{2} \Bigg] \frac{1}{n^3}\\
			&- \frac{1.5q^2 - 8q + 12}{n^2}\Bigg[ 1 - q  + \frac{0.84 + 0.4\sqrt{2}}{n} - \frac{0.9}{n^2} + (2 - q)\frac{8\sqrt{2} - 11}{2}\left( \frac{1}{n} + \frac{7}{n^2}\right)  \Bigg]\\
		\end{align*}
		\begin{equation*}
			\begin{aligned}
				\Rightarrow &I\left(U_{0}, \frac{n + 2}{n - 2} - \frac{1}{n^2}\right) \\
			&< \Bigg[ -2.8 + 0.051(2 - q) + n(q - 1)   - 0.5\sqrt{2}q \Bigg] \frac{1}{n}\\
			&+ \Bigg[ 1.61 + 2.84(2 - q)  + nq(1 - q)+ 1.41q + q(2 - q)\frac{8\sqrt{2} - 11}{2} + 0.5\sqrt{2}(1.5 - 8 + 12) \Bigg] \frac{1}{n^2}\\
			&+ \Bigg[ 13.5(2 - q) - 0.9q + q(2 - q)\frac{56\sqrt{2} - 77}{2} \Bigg] \frac{1}{n^3}\\
			&- \frac{1.5q^2 - 8q + 12}{n^2}\Bigg[ -\frac{2}{n}  + \frac{0.84 + 0.4\sqrt{2}}{n} - \frac{0.9}{n^2} + \left(1 - \frac{2}{n}\right)\frac{8\sqrt{2} - 11}{2}\left( \frac{1}{n} + \frac{7}{n^2}\right)  \Bigg]
			\end{aligned}
		\end{equation*}
		\begin{align*}
			\Rightarrow &I\left(U_{0}, \frac{n + 2}{n - 2} - \frac{1}{n^2}\right) \\
			&< \Bigg[ -2.8 + 0.051(2 - q) + n(q - 1)   - 0.5\sqrt{2}q \Bigg] \frac{1}{n}\\
			&+ \Bigg[ 5.5 + 2.84(2 - q)  + nq(1 - q)+ 1.41q + q(2 - q)\frac{8\sqrt{2} - 11}{2}  \Bigg] \frac{1}{n^2}\\
			&+ \Bigg[ 13.5(2 - q) - 0.9q + q(2 - q)\frac{56\sqrt{2} - 77}{2} \Bigg] \frac{1}{n^3}\\
			&- \frac{1.5q^2 - 8q + 12}{n^2}\Bigg( -\frac{0.44}{n}  - \frac{0.12}{n^2} - \frac{2.2}{n^3}   \Bigg)\\
			&< \Bigg[ -2.8 + 0.051(2 - q) + n(q - 1)   - 0.5\sqrt{2}q \Bigg] \frac{1}{n}\\
			&+ \Bigg[ 5.5 + 2.84(2 - q)  + nq(1 - q)+ 1.41q + q(2 - q)\frac{8\sqrt{2} - 11}{2}  \Bigg] \frac{1}{n^2}\\
			&+ \Bigg[ 13.5(2 - q) - 0.9 + \frac{56\sqrt{2} - 77}{2} \Bigg] \frac{1}{n^3} + \frac{5.5}{n^2}\Bigg( \frac{0.44}{n} + \frac{0.12}{9n} + \frac{2.2}{81n}   \Bigg)\\
		\end{align*}
		\begin{align*}
			\Rightarrow &I\left(U_{0}, \frac{n + 2}{n - 2} - \frac{1}{n^2}\right) \\
			&< \Bigg[ -2.8 + 0.051(2 - q) + n(q - 1)   - 0.5\sqrt{2}q \Bigg] \frac{1}{n}\\
			&+ \Bigg[ 5.5 + 2.84(2 - q)  + nq(1 - q)+ 1.41q + q(2 - q)\frac{8\sqrt{2} - 11}{2}  \Bigg] \frac{1}{n^2} +  \frac{13.5(2 - q) + 2.85}{n^3} \\
			&< \Bigg[ -2.8 + 0.051(2 - q) + n(q - 1)   - 0.5\sqrt{2}q \Bigg] \frac{1}{n}\\
			&+ \Bigg[ 5.5 + 2.84(2 - q)  + nq(1 - q)+ 1.41q + \frac{8\sqrt{2} - 11}{2}  \Bigg] \frac{1}{n^2} +  \frac{13.5(2 - q) + 2.85}{9n^2} .\\
		\end{align*}

		\begin{equation}\label{Theorem_equ1}
			\begin{aligned}
			\Rightarrow &I\left(U_{0}, \frac{n + 2}{n - 2} - \frac{1}{n^2}\right) \\
			&< \Bigg[ -2.8 + 0.051(2 - q) + n(q - 1)   - 0.5\sqrt{2}q \Bigg] \frac{1}{n}\\
			&+ \Bigg[ 6 + 4.34(2 - q)  + nq(1 - q)+ 1.41q \Bigg] \frac{1}{n^2}.\\
			\end{aligned}
		\end{equation}

		\begin{claim}\label{claim 9}
			When $n =7, 8$, the inequality \eqref{Theorem_equ1} still holds.
		\end{claim}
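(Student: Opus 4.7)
The inequality \eqref{Theorem_equ1} was obtained by chaining the estimates in Claims \ref{claim1}--\ref{claim6}, several of which (notably Claims \ref{claim1} and \ref{claim6}) were established only for $n \geq 9$. For the remaining two values $n = 7$ and $n = 8$, the plan is to verify \eqref{Theorem_equ1} directly, bypassing the asymptotic chain.

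First, for each of $n = 7, 8$, I would specialize the $n$-dependent quantities. With $\gamma = n - 4$ and $\alpha = -(n-2)^2/n$, the preceding formulas give explicit radical expressions for $S$, $a_1 = (2\sqrt{(n-2)(2n-6)} + n^2 - 5n + 8)/((n-1)(n-2))$, and $B_0 = a_1(n/(n-1) + n - 4)$. For $n = 7$ this gives $S = (2\sqrt{10} - 5)/15$ and $a_1 = (2\sqrt{10} + 11)/15$; for $n = 8$ one gets $S = (\sqrt{15} - 3)/12$ and the analogous value of $a_1$. Substituting into Lemma \ref{lemmaU0} produces a closed-form expression for $U_0 = U_0(q, n)$, and hence for the left-hand side $I(U_0, (n+2)/(n-2) - 1/n^2)$, as an explicit function of the single parameter $q$.

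Second, since we are in the regime $\frac{n+2}{n-2} - \frac{1}{n^2} \leq p \leq \frac{n+2}{n-2}$, the variable $q = 2p/(p+1)$ ranges over a short interval near $q_{\ast} := (n+2)/n$, of length at most $|q'(p)|/n^2 = (n-2)^2/(2n^4)$. On this narrow interval both sides of \eqref{Theorem_equ1} are smooth in $q$, so evaluating both sides at $q = q_{\ast}$ (the endpoint corresponding to $p = (n+2)/(n-2)$) and applying a crude Lipschitz bound extends the inequality to the whole admissible interval. This reduces Claim \ref{claim 9} to a pair of explicit numerical comparisons, one for $n = 7$ and one for $n = 8$.

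The main obstacle will be the bookkeeping: the radical expressions for $S$ and $a_1$ interact with $U_0(q)$ inside $I$ in a non-trivial way, so care is required to expand everything without sign or normalization errors. Fortunately the numerical margin left by \eqref{Theorem_equ1} is of order $1/n$ (the dominant contribution from the bracket $[-2.8 + n(q-1) - 0.5\sqrt{2}q]/n$ evaluated at $q = q_{\ast} \approx 1 + 2/n$ is already comfortably negative), whereas the Lipschitz correction over the short $q$-interval is only $O(n^{-4})$. Thus the verification is expected to go through for both $n = 7$ and $n = 8$ with room to spare.
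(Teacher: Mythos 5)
There is a genuine gap: you have misread the range of $q$ over which \eqref{Theorem_equ1} must be verified. In Lemma \ref{lemmaU0} the quantity that is pinned down is the \emph{combination} $p-P=\frac{n+2}{n-2}-\frac{1}{n^2}$, not $p$ itself; the exponent $p$ still runs over the whole interval $1<p\le\frac{n+2}{n-2}$, and the final display of that proof explicitly invokes \eqref{Theorem_equ1} for all $1<q\le 1+\frac{2}{n}$ in order to conclude $\mathbb K_5>0$ there. The narrow regime $\frac{n+2}{n-2}-\frac{1}{n^2}\le p\le\frac{n+2}{n-2}$ (equivalently $1+\frac{1.9}{n}<q<1+\frac{2}{n}$, Claim \ref{claim7}) only enters later, in the estimate of the lower bound $M_2$. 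Consequently your reduction to "evaluate at $q_*=\frac{n+2}{n}$ plus a Lipschitz correction of size $O(n^{-4})$" does not cover the required range: the admissible $q$-interval has length $\frac{2}{n}$, so the Lipschitz correction is $O(1)\cdot\frac{2}{n}$, the same order as the margin you are trying to exploit, and nothing is gained. You must control both sides of \eqref{Theorem_equ1} as functions of $q$ on all of $(1,1+\frac{2}{n}]$.

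This is in fact what the paper does, and its route is also lighter than yours: rather than computing $I\bigl(U_0,\frac{n+2}{n-2}-\frac{1}{n^2}\bigr)$ in closed form from the radical expressions for $S$, $a_1$, $B_0$, it starts from the intermediate bound \eqref{n>7}, which is valid for all $n\ge7$ (only the passage from \eqref{n>7} to \eqref{Theorem_equ1} used the $n\ge9$ claims), plugs in $n=7$ and $n=8$, bounds its right-hand side above by a linear function of $q$ (e.g.\ $0.773q-1.068$ for $n=7$ on $1<q\le\frac97$), bounds the right-hand side of \eqref{Theorem_equ1} below by an explicit quadratic in $q$, and checks the resulting polynomial inequality on the full interval. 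If you want to keep your "direct closed-form evaluation" strategy, you would need to carry out the radical bookkeeping and then verify the inequality uniformly in $q$ over $(1,1+\frac{2}{n}]$ (say by monotonicity or a sign check of a polynomial), not just near the endpoint $q_*$.
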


		Since $n\geq 7$ and $1 < q \leq 1 + \frac{2}{n}$, we know
		\begin{equation*}
			\begin{aligned}
				  &-\mathbb K_5 = I\left(U_{0}, \frac{n + 2}{n - 2} - \frac{1}{n^2}\right) \\
				&< \Bigg[ -2.8 + 0.051(2 - q) + n(q - 1)   - 0.5\sqrt{2}q \Bigg] \frac{1}{n} \\
			&+ \Bigg[ 6 + 4.34(2 - q) + 1.41q \Bigg]\frac{1}{n^2}\\
			&< \left(n - 0.758 - \frac{2.93}{n}\right)\frac{q}{n} + \frac{-n - 2.698 }{n} + \frac{14.68}{n^2}\\
			&\leq \left(n - 0.758 - \frac{2.93}{n}\right)\frac{n + 2}{n^2} + \frac{-n - 2.698 }{n} + \frac{14.68}{n^2}\\
			&= \frac{- 0.758n^2 - 2.93n + 2n^2 - 1.516n - 5.86 - 2.698 n^2 + 14.68n}{n^3}\\
			&= \frac{- 1.456n^2 + 10.234n - 5.86 }{n^3} < 0.
			\end{aligned}
		\end{equation*}

	\end{proof}

	\begin{lemma}
		If $1 < p \leq \frac{n + 2}{n - 2}, q = \frac{2p}{p + 1}$, then there exists $0 \leq M_2 < M_1$, where $M_1$ comes from Theorem \ref{Theorem1}, such that for any $M > M_2$, the condition \eqref{condition08240930} is satisfied.
	\end{lemma}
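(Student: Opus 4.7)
The plan is to split the argument by the sign of $P := p - \frac{n+2}{n-2} + \frac{1}{n^2}$. For $1 < p < \frac{n+2}{n-2} - \frac{1}{n^2}$, so $P < 0$, the discussion immediately preceding the lemma already establishes \eqref{condition08240930} outright, and $M_2 = 0$ is admissible. The substantive case is $p \in [\frac{n+2}{n-2} - \frac{1}{n^2}, \frac{n+2}{n-2}]$, in which $P \in [0, \frac{1}{n^2}]$.

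In this range I retain the parameter choices of Section \ref{Proof of Thm3}: $\gamma = n - 4$, $\alpha = -(n-2)^2/n$, $S$ the solution of $b_2 = 0$ as in Lemma \ref{lemma1}, $Q = (1 - S)/(n - 1)$, $U = U_0$ as in Lemma \ref{lemmaU0}, and $0 < T \ll 1$. Lemmas \ref{lemma1} and \ref{lemmaU0} give $\mathbb{K}_1, \mathbb{K}_3, \mathbb{K}_5 > 0$ at the boundary value $P = \frac{1}{n^2}$; moreover $\mathbb{K}_2 = T(1 - O(T)) > 0$ directly, and the Cauchy-Schwarz bound $\mathbb{K}_6 \geq \mathbb{K}_2 + \mathbb{K}_3$ recorded in the excerpt gives $\mathbb{K}_6 > 0$. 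Since each $\mathbb{K}_i$ depends continuously on $P$ on the compact interval $[0, \frac{1}{n^2}]$, I obtain a uniform lower bound $\mathbb{K}_i \geq \varepsilon_1 > 0$ for $i \in \{1,2,3,5,6\}$.

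The only possibly negative coefficient is $\mathbb{K}_4$. The critical relation $q = 2p/(p+1)$ yields the exponent identity
\begin{equation*}
v^{\alpha + p - 1}|\nabla v|^{\gamma + 2} = \bigl(v^{\alpha - 1}|\nabla v|^{\gamma + q + 2}\bigr)^{1/(p+1)}\bigl(v^{\alpha + p}|\nabla v|^{\gamma + q}\bigr)^{p/(p+1)},
\end{equation*}
and weighted AM-GM with a free parameter $\theta > 0$ gives
\begin{equation*}
|\mathbb{K}_4|\, v^{\alpha + p - 1}|\nabla v|^{\gamma + 2} \leq \frac{|\mathbb{K}_4|\theta}{p+1}\, v^{\alpha - 1}|\nabla v|^{\gamma + q + 2} + \frac{p|\mathbb{K}_4|\theta^{-1/p}}{p+1}\, v^{\alpha + p}|\nabla v|^{\gamma + q}.
\end{equation*}
Requiring the two coefficients on the right to be bounded by $\mathbb{K}_5 M$ and $\mathbb{K}_6 M$ respectively, and optimizing over $\theta > 0$, produces the explicit threshold
\begin{equation*}
M_2 \;:=\; \frac{p^{p/(p+1)}\,|\mathbb{K}_4|}{(p+1)\,\bigl(\mathbb{K}_5\, \mathbb{K}_6^{p}\bigr)^{1/(p+1)}}.
\end{equation*}
For every $M > M_2$ the Young bound absorbs the $\mathbb{K}_4$ contribution into the $M$-scaled positive terms, leaving only non-negative contributions, so \eqref{condition08240930} holds.

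The main obstacle is the final comparison $M_2 < M_1$. From the definition of $\mathbb{K}_4$ one sees $\mathbb{K}_4 = -P + O(T)$ uniformly in $p$ on the interval of interest, so $|\mathbb{K}_4| = O\bigl(\tfrac{1}{n^2} + T\bigr)$; meanwhile $\mathbb{K}_5, \mathbb{K}_6$ are bounded below by positive constants depending only on $n$ (inherited from Lemma \ref{lemmaU0} and continuity). Hence $M_2$ can be made arbitrarily small by taking $T \to 0^+$, uniformly in $p$. Since the explicit formula $M_1 = (p+1)\bigl[\tfrac{(n-1)^2 q}{4n} - 1\bigr]^{-1}\bigl[\tfrac{n}{q} - \tfrac{n(n-1)}{n+2}\bigr]^{q/2}$ from Theorem \ref{Theorem1} is strictly positive and bounded below by a constant depending only on $n$, choosing $T$ sufficiently small yields $M_2 < M_1$, completing the proof.
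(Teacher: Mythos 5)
Your overall route is the same as the paper's: split on the sign of $P = p - \frac{n+2}{n-2} + \frac{1}{n^2}$, take $M_2 = 0$ when $P<0$, and for $P\ge 0$ absorb the negative $\mathbb K_4$-term via the critical-exponent identity and weighted AM--GM into the $M$-scaled terms $\mathbb K_5 M\, v^{\alpha-1}|\nabla v|^{\gamma+q+2}$ and $\mathbb K_6 M\, v^{\alpha+p}|\nabla v|^{\gamma+q}$. Your optimized threshold $M_2 = p^{p/(p+1)}|\mathbb K_4|\,(p+1)^{-1}(\mathbb K_5\mathbb K_6^{p})^{-1/(p+1)}$ is in fact algebraically identical to the paper's $M_2 = P^{q/2}(q/2)^{q/2}(J\cdot\frac{2}{2-q})^{-(2-q)/2}(-\frac{q}{\gamma+q}+U_0)^{-q/2}$ with $J=\mathbb K_5/P$, since $q/2=p/(p+1)$. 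Up to this point the proposal is sound.

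The gap is in the final comparison $M_2<M_1$, which is the actual content of the lemma. Your argument rests on the claim that ``$M_2$ can be made arbitrarily small by taking $T\to 0^+$,'' but this is false: as $T\to 0$ one has $|\mathbb K_4|\to P$, and $P$ is \emph{not} a free parameter --- it is pinned at $P=p-\frac{n+2}{n-2}+\frac{1}{n^2}\in[0,\frac{1}{n^2}]$ by the requirement (Lemma \ref{lemma1}) that $\mathbb K_1>0$, which is only established at $p-P=\frac{n+2}{n-2}-\frac{1}{n^2}$. So $M_2$ converges to a fixed positive number, not to zero. The subsequent appeal to ``$M_1$ bounded below by a constant depending only on $n$'' then decides nothing, because the relevant quantities are all of comparable size in $n$: $M_1\sim C'/n$, while $\mathbb K_5$ and $\mathbb K_6$ are themselves only of order $1/n$ (indeed $\mathbb K_6=-\frac{q}{\gamma+q}+U_0$ is a difference of two $O(1/n)$ terms, positive only by a margin of roughly $(3-2\sqrt2)/n$), so that $M_2\sim \frac{1/n^2}{C/n}\sim c/n$ is of the \emph{same order} as $M_1$. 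Deciding $M_2<M_1$ therefore requires quantitative lower bounds on $\mathbb K_5$, $U_0$, and $q$ and an explicit comparison of constants; this is precisely what the paper's proof does over the second half of Section \ref{Proof of Thm3} (the bounds $\mathbb K_5>-(n-0.758-\frac{2.93}{n})\frac{q}{n}+\frac{n+2.698}{n}-\frac{12.78}{n^2}$, $U_0>(4-2\sqrt2)(\frac1n+\frac7{n^2})$, $q>1+\frac{1.9}{n}$ from Claims \ref{claim7}--\ref{claim8}, separate numerics for $n=7$ and $n\ge 8$, ending in a polynomial inequality). Without that computation the lemma is not proved.
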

	\begin{proof}
		By Lemma \ref{lemmaU0}, we know when $1 < p < \frac{n + 2}{n - 2} - \frac{1}{n^2}$, we can choose
	\begin{equation}
		\begin{aligned}
			 &P = p - \frac{n + 2}{n - 2} + \frac{1}{n^2} < 0,\\
			 &0 < T <<|P|.
		\end{aligned}
	\end{equation}
	Then we get $\mathbb K_4 > 0$ and the condition \eqref{condition08240930} holds. At this time, we can choose $M_2 := 0 < M_1$.
	
	Now let us consider the case $\frac{n + 2}{n - 2} - \frac{1}{n^2}\leq p \leq \frac{n + 2}{n - 2}$.
	We choose
	\begin{equation}
		\begin{aligned}
			 &P = p - \frac{n + 2}{n - 2} + \frac{1}{n^2} + \varepsilon > 0 ,\\
			 &0 < T <<|P|.
		\end{aligned}
	\end{equation}
	This case $\mathbb K_4 = -P + O(\varepsilon^2) < 0$ and we will use $v^{\alpha + p}|\nabla v|^{\gamma + q}$ and $v^{\alpha - 1}|\nabla v|^{\gamma + q + 2}$ to control the term $v^{\alpha + p - 1}|\nabla v|^{\gamma + 2}$. By Young inequality, we have
	\begin{equation}
		\begin{aligned}
			v^{\alpha + p - 1}|\nabla v|^{\gamma + 2}\leq \frac{q}{2}\Big(JM\cdot\frac{2}{2 - q}\Big)^{-\frac{2 - q}{q}}  v^{\alpha + p}|\nabla v|^{\gamma + q} + J M v^{\alpha - 1}|\nabla v|^{\gamma + q + 2},
		\end{aligned}
	\end{equation}
	with $(\frac{2}{q}, \frac{2}{2 - q})$. We only need that
	\begin{equation}\label{M_large_1}
		\begin{aligned}
			&\mathbb K_5 - PJ > 0,\\
		\end{aligned}
	\end{equation}
	and
	\begin{equation}\label{M_large_2}
		\begin{aligned}
			&\mathbb K_6 - P\cdot\frac{q}{2}\Big(J\cdot\frac{2}{2 - q}\Big)^{-\frac{2 - q}{q}}M^{-\frac{2}{q}} > 0.
		\end{aligned}
	\end{equation}
	\begin{claim}\label{claim7}
		When $\frac{n + 2}{n - 2} - \frac{1}{n^2} \leq p \leq \frac{n + 2}{n - 2}$ and $n \geq 7$,
we have
\begin{equation*}
	1 + \frac{1.9}{n} < q < 1 + \frac{2}{n}.
\end{equation*}
	\end{claim}
	
	Thus by \eqref{Theorem_equ1} we get that
	\begin{align*}
		\Rightarrow &\mathbb K_5=- I\left(U_{0}, \frac{n + 2}{n - 2} - \frac{1}{n^2}\right) \\
				&>  -\Bigg[ -2.8 + 0.051(2 - q) + n(q - 1)   - 0.5\sqrt{2}q \Bigg] \frac{1}{n} \\
			&- \Bigg[ 6 + 4.34(2 - q) + nq(1 - q) + 1.41q \Bigg]\frac{1}{n^2}\\
			&> - \Bigg[ -2.8 + 0.051(2 - q) + n(q - 1)   - 0.5\sqrt{2}q \Bigg] \frac{1}{n} \\
			&- \Bigg[ 6 + 4.32(2 - q) - 1.9\left(1 + \frac{1.9}{n}\right) + 1.41q \Bigg]\frac{1}{n^2}.\\
	\end{align*}
	\begin{equation*}
		\begin{aligned}
			\Rightarrow &\mathbb K_5 > -\left(n - 0.758 - \frac{2.93}{n}\right)\frac{q}{n} + \frac{n + 2.698 }{n} - \frac{12.78}{n^2},\\
		\end{aligned}
	\end{equation*}
	Using \eqref{M_large_1}, when $T$ tends to zero, we can choose $J$ such that $\mathbb K_5 - PJ$ tends to zero . Then we get
	\begin{equation}
		\begin{aligned}
			PJ > -\left(n - 0.758 - \frac{2.93}{n}\right)\frac{q}{n} + \frac{n + 2.698 }{n} - \frac{12.78}{n^2}.
		\end{aligned}
	\end{equation}
	By \eqref{M_large_2}, we get
	\begin{align*}
		&P\cdot\frac{q}{2}\Big(J\cdot\frac{2}{2 - q}\Big)^{-\frac{2 - q}{q}}M^{-\frac{2}{q}} < -\frac{q}{\gamma + q} + U_0,\\
		\Leftrightarrow & P\cdot\frac{q}{2}\Big(J\cdot\frac{2}{2 - q}\Big)^{-\frac{2 - q}{q}}\left( -\frac{q}{\gamma + q} + U_0\right)^{-1} <  M^{\frac{2}{q}},\\
		\Leftrightarrow &M_2 := P^{\frac{q}{2}} \cdot\left(\frac{q}{2}\right)^{\frac{q}{2}}\Big(J\cdot\frac{2}{2 - q}\Big)^{-\frac{2 - q}{2}}\left( -\frac{q}{\gamma + q} + U_0\right)^{-\frac{q}{2}} <  M.\\
	\end{align*}
	So far, we have already proved that for any $M > M_2$, the condition \eqref{condition08240930} is satisfied. Finally we will show that
	\begin{align*}
		&M_2:=P^{\frac{q}{2}} \cdot\left(\frac{q}{2}\right)^{\frac{q}{2}}\Big(J\cdot\frac{2}{2 - q}\Big)^{-\frac{2 - q}{2}}\left( -\frac{q}{n - 4 + q} + U_0\right)^{-\frac{q}{2}}\\
		& <  (p + 1)\Bigg[ \frac{(n - 1)^2q}{4n} - 1 \Bigg]^{- 1} \Bigg[ \frac{n}{q} - \frac{n(n - 1)}{n + 2} \Bigg]^{\frac{q}{2}} = : M_1.
	\end{align*}
	\begin{align*}
		M_1 &=  (p + 1)\Bigg[ \frac{(n - 1)^2q}{4n} - 1 \Bigg]^{- 1} \Bigg[ \frac{n}{q} - \frac{n(n - 1)}{n + 2} \Bigg]^{\frac{q}{2}}\\
		&>  (p + 1)\Bigg[ \frac{(n - 1)^2q}{4n} - 1 \Bigg]^{- 1} \Bigg[ \frac{n^2}{n + 2} - \frac{n(n - 1)}{n + 2} \Bigg]^{\frac{q}{2}}\\
		&>  \left(\frac{n + 2}{n - 2} - \frac{1}{n^2} + 1\right)\Bigg[ \frac{(n - 1)^2(n + 2)}{4n^2} - 1 \Bigg]^{- 1} \Bigg( \frac{n}{n + 2} \Bigg)^{\frac{n + 2}{2n}}\\
		&>  \left(\frac{n + 1}{n - 2} + 1\right)\Bigg[ \frac{(n - 1)^2(n + 2)}{4n^2} - 1 \Bigg]^{- 1} \Bigg( \frac{7}{9} \Bigg)^{\frac{n + 2}{2n}}\\
		&> 2\Bigg( \frac{7}{9} \Bigg)^{\frac{9}{14}} \Bigg[ \frac{(n - 1)^2(n + 2)}{4n^2} - 1 \Bigg]^{- 1}\\
		&> 1.7\Bigg[ \frac{(n - 1)^2(n + 2)}{4n^2} - 1 \Bigg]^{- 1}.\\
	\end{align*}

	\begin{claim}\label{claim8}
		When $n \geq 7$, we have
		\begin{align*}
		U_0 &> (4 - 2\sqrt{2}) \left(\frac{1}{n} + \frac{7}{n^2} \right).\\
	\end{align*}
	\end{claim}
	Besides we know that the function $y = x^{-\frac{1}{x}}$ is decreasing when $x > 1$. So we get that
	\begin{align*}
		\left(\frac{2}{2 - q}\right)^{-\frac{2 - q}{2}} < \left(\frac{2}{2 - 1}\right)^{-\frac{2 - 1}{2}} = \frac{\sqrt{2}}{2}.
	\end{align*}
	Hence we obtain that:	
	\begin{align*}
		M_2 &= P^{\frac{q}{2}} \cdot\left(\frac{q}{2}\right)^{\frac{q}{2}}\Big(J\cdot\frac{2}{2 - q}\Big)^{-\frac{2 - q}{2}}\left( -\frac{q}{n - 4 + q} + U_0\right)^{-\frac{q}{2}}\\
		&= P\cdot\left(\frac{q}{2}\right)^{\frac{q}{2}}\Big(PJ\cdot\frac{2}{2 - q}\Big)^{-\frac{2 - q}{2}}\left( -\frac{q}{n - 4 + q} + U_0\right)^{-\frac{q}{2}}\\
		&< \frac{1}{n^2}\Bigg[-\left(n - 0.758 - \frac{2.93}{n}\right)\frac{q}{n} + \frac{n + 2.698 }{n} - \frac{12.78}{n^2} \Bigg]^{-\frac{2 - q}{2}}\\
		&\cdot \left(\frac{2}{2 - q}\right)^{-\frac{2 - q}{2}}\cdot\left( -\frac{1 + \frac{2}{n}}{n - 3 + \frac{2}{n}} + \frac{4 - 2\sqrt{2}}{n} + \frac{28 - 14\sqrt{2}}{n^2}\right)^{-\frac{q}{2}}\\
		&< \frac{1}{n^2} \Bigg[-\left(n - 0.758 - \frac{2.93}{n}\right)\frac{n + 2}{n^2} + \frac{n + 2.698 }{n} - \frac{12.78}{n^2} \Bigg]^{-\frac{2 - q}{2}}\\
		&\cdot \frac{\sqrt{2}}{2}\left( -\frac{1 + \frac{2}{n}}{n - 3} + \frac{4 - 2\sqrt{2}}{n} + \frac{28 - 14\sqrt{2}}{n^2}\right)^{-\frac{q}{2}}\\
		&< \frac{1}{n^q} \Bigg( 1.456n - 8.334 \Bigg)^{-\frac{2 - q}{2}} \cdot \frac{\sqrt{2}}{2}\left( -\frac{1 + \frac{2}{n}}{n - 3} + \frac{4 - 2\sqrt{2}}{n} + \frac{28 - 14\sqrt{2}}{n^2}\right)^{-\frac{q}{2}}\\
		&< n^{- 1 - \frac{1.9}{n}} \Bigg( 1.456n - 8.334 \Bigg)^{-\frac{n - 2}{2n}} \cdot \frac{\sqrt{2}}{2}\left( -\frac{1 + \frac{2}{n}}{n - 3} + \frac{4 - 2\sqrt{2}}{n} + \frac{28 - 14\sqrt{2}}{n^2}\right)^{-\frac{q}{2}}\\
	\end{align*}
	When $n = 7$, we have
	\begin{align*}
		M_1 > 2.6,
	\end{align*}
	and
	\begin{align*}
		M_2 &< n^{- 1 - \frac{1.9}{n}} \Bigg( 1.456n - 8.334 \Bigg)^{-\frac{n - 2}{2n}} \cdot \frac{\sqrt{2}}{2}\left( -\frac{1 + \frac{2}{n}}{n - 3} + \frac{4 - 2\sqrt{2}}{n} + \frac{28 - 14\sqrt{2}}{n^2}\right)^{-\frac{n + 2}{2n}}\\
		&< 0.8.
	\end{align*}
	When $n \geq 8$, we have
	\begin{align*}
		 &-\frac{1 + \frac{2}{n}}{n - 3} + \frac{1}{n} + \frac{28 - 14\sqrt{2}}{n^2}\\
		 & > -\frac{1 + \frac{2}{n}}{n - 3} + \frac{1}{n} + \frac{8.2}{n^2}\\
		 &= \frac{-n^2 - 2n + n^2 - 3n + 8.2n - 24.6}{n^2(n - 3)}\\
		 &= \frac{ 3.2n - 24.6}{n^2(n - 3)} > 0.
	\end{align*}	
	Thus we deduce that
	\begin{align*}
		M_2 &< n^{- 1 - \frac{1.9}{n}} \Bigg( 1.456n - 8.334 \Bigg)^{-\frac{n - 2}{2n}} \cdot \frac{\sqrt{2}}{2}\cdot\left(\frac{3 - 2\sqrt{2}}{n}\right)^{-\frac{q}{2}}\\
		&< n^{- 1 - \frac{1.9}{n}}\Bigg(1.456n - 8.334 \Bigg)^{-\frac{n - 2}{2n}} \cdot \frac{\sqrt{2}}{2}\cdot\left( \frac{3 - 2\sqrt{2}}{n}\right)^{-\frac{n + 2}{2n}}\\
		&= n^{- 1}\Bigg( 1.456n - 8.334 \Bigg)^{-\frac{1}{2}} \cdot \frac{\sqrt{2}}{2}\cdot\left( \frac{3 - 2\sqrt{2}}{n}\right)^{-\frac{1}{2}} \left[ \frac{1.456n - 8.334}{(3 - 2\sqrt{2})n}\right]^{\frac{1}{n}}n^{\frac{0.1}{n}} .\\
	\end{align*}	
	Since the function $ y = n^{\frac{1}{n}}$ is decreasing when $n \geq e$, we get that
	\begin{align*}
		M_2 &<  n^{- 1}\Bigg( 1.456n - 8.334 \Bigg)^{-\frac{1}{2}} \cdot \frac{\sqrt{2}}{2}\cdot\left( \frac{3 - 2\sqrt{2}}{n}\right)^{-\frac{1}{2}} \left[ \frac{1.456}{3 - 2\sqrt{2}}\right]^{\frac{1}{8}}8^{\frac{0.1}{8}} \\
		&< n^{- 1}\Bigg( 1.456n - 8.334 \Bigg)^{-\frac{1}{2}} \left( \frac{3 - 2\sqrt{2}}{n}\right)^{-\frac{1}{2}} \\
		&< n^{-1}\left(0.24 - \frac{1.43}{n}\right)^{-\frac{1}{2}}. \\
	\end{align*}		
	Therefore, if $n \geq 8$, we need to show that	
	\begin{align*}
		&M_2 < M_1,\\
		\Leftarrow&n^{-1}\left(0.24 - \frac{1.43}{n}\right)^{-\frac{1}{2}}  < 1.7\Bigg[ \frac{(n - 1)^2(n + 2)}{4n^2} - 1 \Bigg]^{- 1},\\
		\Leftrightarrow &\Bigg[ \frac{(n - 1)^2(n + 2)}{4n^2} - 1 \Bigg] < 1.7n\left(0.24 - \frac{1.43}{n}\right)^{\frac{1}{2}},\\
		\Leftarrow &\Bigg[ \frac{(n - 1)^2(n + 2)}{4n^2} - 1 \Bigg] < 1.7n\left(0.24 - \frac{1.43}{8}\right)^{\frac{1}{2}},\\
		\Leftarrow & \frac{(n - 1)^2(n + 2)}{4n^2} - 1 < 0.4n,\\
		\Leftarrow &0.6n^3 + 4n^2 + 3n - 2> 0.
	\end{align*}	
		
	\end{proof}

	\begin{appendices}
		\section{Proof of claims in Section \ref{Proof of Thm3}}

		\begin{proof}[Proof of claim \ref{claim10}]
Denote
\begin{align*}
    \qquad D_1&=\frac{\sqrt{(n-2)(2n-6)}-2}{n-4}\\
    \qquad D_2&=\frac{2\sqrt{(n-2)(2n-6)}+n^2-5n+8}{(n-1)^2}
\end{align*}
then
\begin{align*}
    & H=4(n-2)D_1D_2 \left(-n+3-\frac{4}{n}\right)\times\left\{\frac{1}{n^2}-\frac{n+2}{n-2}+\frac{n - 2 - \sqrt{(n-2)(2n-6)}}{n}\right\}\\
    &-\left\{-(n+2)+\frac{n-2}{n^2}-\frac{n-2}{n}\left (2\sqrt{(n-2)(2n-6)}-n+4 \right ) \right\}^2\\
    &=4(n-2)D_1D_2 \left( n-3+\frac{4}{n} \right)\frac{4n^2-n+2+n(n-2)\left( 2+\sqrt{(n-2)(2n-6)}\right)}{n^2(n-2)}\\
    &-\left\{ \frac{1}{n^2}\left[8n^2-9n+2+2n(n-2) \sqrt{(n-2)(2n-6)}\right] \right\}^2.
\end{align*}
$\Rightarrow$
\begin{align*}
    n^4H=&4n^2D_1D_2\left(n-3+\frac{4}{n}\right)\left[6n^2-5n+2+n(n-2)\sqrt{(n-2)(2n-6)} \right]\\
    &-\left\{
        8n^2-9n+2+2n(n-2)\sqrt{(n-2)(2n-6)}
    \right\}^2\\
    D_1\times D_2=&\frac{\left(\sqrt{(n-2)(2n-6)}-2\right)\left(2\sqrt{(n-2)(2n-6)}+n^2-5n+8\right)}{(n-1)^2(n-4)}\\
    =&\frac{2(n^2-5n+4)+(n-1)(n-4)\sqrt{(n-2)(2n-6)}}{(n-1)^2(n-4)}\\
    =&\frac{2+\sqrt{(n-2)(2n-6)}}{n-1}.
\end{align*}
Let $\Delta^\prime=(n-2)(2n-6) $
\begin{align*}
    \Rightarrow n^4H=&4n^2\frac{2+\sqrt{\Delta^\prime }}{n-1}\frac{n^2-3n+4}{n} \left[ 6n^2-5n+2+n(n-2)\sqrt{\Delta^\prime }\right]\\
    &-\left[8n^2-9n+2+2n(n-2)\sqrt{\Delta^\prime } \right]^2,
 \end{align*}
 so
  \begin{align*}
    (n-1)n^4H=&4n(n^2-3n+4)(2+\sqrt{\Delta^\prime })\left[ 6n^2-5n+2+n(n-2)\sqrt{\Delta^\prime }\right]\\
    &-(n-1)\left[8n^2-9n+2+2n(n-2)\sqrt{\Delta^\prime } \right]^2\\
    =&8n(n^2-3n+4)(6n^2-5n+2)\\
    &+4n^2(n-2)^2(n^2-3n+4)(2n-6)-4n^2(n-2)^3(2n-6)(n-1)\\
    &-(8n^2-9n+2)^2(n-1)+\sqrt{\Delta^\prime }\cdot 8n(8n^2-9n+2)\\
     = &8n(n^2-3n+4)(6n^2-5n+2)+8n^2(n-2)^2(2n-6)\\
    &-(8n^2-9n+2)^2(n-1) +\sqrt{\Delta^\prime }\cdot 8n(8n^2-9n+2)\\
	=& -88n^4 + 327n^3 -251n^2 + 24n + 4+\sqrt{\Delta^\prime }\cdot 8n(8n^2-9n+2).\\
\end{align*}
	Since we know when $n \geq 7$,
	\begin{align*}
		\sqrt{\Delta '} &> \sqrt{2}\left[n - 2.5 - \frac{1}{6(n - 2)}\right]\\
		&> \sqrt{2}\left(n - 2.5 - \frac{1}{30}\right)\\
		&= \sqrt{2}\left(n - \frac{38}{15}\right),\\
	\end{align*}
	then we get that
	\begin{align*}
		(n-1)n^4H &> -88n^4 + 327n^3 -251n^2 + 24n + 4 + 8\sqrt{2}n\left(n - \frac{38}{15}\right)\cdot (8n^2-9n+2)\\
		&> 2.5n^4 - 4.2n^3 + 29n^2 - 34n + 4 > 0.
	\end{align*}		
			
		\end{proof}

		\begin{proof}[Proof of claim \ref{claim1} ]
			In fact, we know
			\begin{align*}
				&\left(1 + \frac{2}{n}\right)\left(n - 3 + \frac{2}{n}\right)(n - 2)^2(n - 1)\\
				&= \left(n - 1 - \frac{4}{n} + \frac{4}{n^2}\right)(n - 2)^2(n - 1)\\
				&< \left(n - 1 - \frac{3}{n}\right)(n - 2)^2(n - 1)\\
				&= \left(n^2 - 3n\right)(n - 2)(n - 1),\\
			\end{align*}
			and
			\begin{align*}
				 &2\sqrt{(n - 2)(2n - 6)} + n^2 - 5n + 8 \\
				 &> 2\sqrt{2}(n - 3) + n^2 - 5n + 8 \\
				 &= n^2 - 2.3n + (2\sqrt{2} - 2.7)n + 8 - 6\sqrt{2}\\
				 &> n^2 - 2.3n.
			\end{align*}
			Then we get that
			\begin{align*}
				\Delta &> 1 - \frac{(1 + \frac{2}{n})(n - 3 + \frac{2}{n})(n - 2)^2(n - 1)}{ (\frac{n}{n - 1} + n - 4 )(2n - 6)\Big[ 2\sqrt{(n - 2)(2n - 6)} + n^2 - 5n + 8\Big] }\\
				&> 1 - \frac{\left(n^2 - 3n\right)(n - 2)(n - 1)}{  (n - 3 + \frac{1}{n - 1})(2n - 6)(n^2 - 2.3n) }\\
				&= 1 - \frac{(n - 2)(n - 1)}{  2(n - 3 + \frac{1}{n - 1})(n - 2.3) }\\
				&= 1 - \frac{(n - 2)(n - 1)}{  2(n^2 - 5.3n + 7.9 - \frac{1.3}{n - 1}) }\\
				&> 1 - \frac{(n - 2)(n - 1)}{  2(n^2 - 5.3n + 7.6) }.\\
			\end{align*}
			Then
			\begin{align*}
				&\Delta > 0.5 - \frac{2}{n} + \frac{2}{n^2},\\
				\Leftrightarrow &1 - \frac{(n - 2)(n - 1)}{  2(n^2 - 5.3n + 7.6) } > 0.5 - \frac{2}{n} + \frac{2}{n^2},\\
				\Leftrightarrow &\frac{(n - 2)(n - 1)}{(n^2 - 5.3n + 7.6) } < \frac{n^2 + 4n - 4}{n^2},\\
				\Leftrightarrow h_1(n):= &-1.7n^3 + 19.6n^2 - 51.6n + 30.4 < 0
			\end{align*}
			Since $n \geq 9$, we know
			\begin{align*}
				h'(n) = -5.1n^2 + 39.2n - 51.6 \leq h'(9) < 0.
			\end{align*}
			So we deduce that
			\begin{align*}
				h(n) < h(9) < 0.
			\end{align*}
		\end{proof}

		\begin{proof}[Proof of claim \ref{claim2} ]
			\begin{align*}
				LHS &= \left( \frac{\sqrt{2} - 1}{2} + \frac{1}{n} \right) \cdot \frac{n - 4}{n - \frac{5}{2} - \frac{1}{6(n - 2)}  - \sqrt{2} }\left(1 + \frac{4}{n - 2} - \frac{1}{n^2}\right)\\
				&= \left( \frac{\sqrt{2} - 1}{2} + \frac{1}{n} + \frac{2\sqrt{2} - 2}{n - 2} + \frac{4}{n(n - 2)} - \frac{\sqrt{2} - 1}{2n^2} - \frac{1}{n^3} \right) \cdot \frac{n - 4}{n - \frac{5}{2} - \frac{1}{6(n - 2)}  - \sqrt{2} }\\
				&< \left( \frac{\sqrt{2} - 1}{2} + \frac{1}{n} + \frac{2\sqrt{2} - 2}{n - 2} + \frac{4}{n(n - 2)} - \frac{\sqrt{2} - 1}{2n^2} - \frac{1}{n^3} \right) \cdot \frac{n - 4}{n - \frac{5}{2} - \frac{1}{30}  - \sqrt{2} }\\
				&<  \frac{\sqrt{2} - 1}{2} + \frac{1}{n} + \frac{2\sqrt{2} - 2}{n} + \frac{2\sqrt{2} - 2}{n - 2} - \frac{2\sqrt{2} - 2}{n} + \frac{4}{n(n - 2)} - \frac{\sqrt{2} - 1}{2n^2} - \frac{1}{n^3} \\
			\end{align*}
			Thus we get that
			\begin{align*}
				LHS &< \frac{\sqrt{2} - 1}{2} + \frac{2\sqrt{2} - 1}{n} + \frac{4\sqrt{2}}{n(n - 2)} \\
				&\leq \frac{\sqrt{2} - 1}{2} + \frac{2\sqrt{2} - 1}{n} + \frac{4\sqrt{2}}{5n}  \\
				&< \frac{\sqrt{2} - 1}{2} + \frac{3}{n}.
			\end{align*}
		\end{proof}

		\begin{proof}[Proof of claim \ref{claim3} ]
		\begin{align*}
			LHS &= 2\sqrt{2} \left(n - 3 + \frac{1}{n - 1}\right)\frac{2\sqrt{2}\Big( n - \frac{38}{15} \Big) + n^2 - 5n + 8}{(n - 1)(n - 2)^2(n - 4)}\left( n - \frac{38}{15}  -  \frac{1.5}{\sqrt{2}}\right)\\
			&> 2\sqrt{2} \left(n - 3 + \frac{1}{n - 1}\right)\frac{n^2 -2.2n + 0.83}{(n - 1)(n - 2)^2(n - 4)}\left( n - 3.6\right)\\
			&> 2\sqrt{2} \left(n - 3 + \frac{1}{n}\right)\frac{n^2 -2.2n + 0.83}{(n - 1)(n - 2)^2(n - 4)}\left( n - 3.6\right)\\
			&> 2\sqrt{2} \left(n^3 - 5.2n^2 + 8.43n - 4.69\right)\frac{n - 3.6}{(n - 1)(n - 2)^2(n - 4)}\\
			&= 2\sqrt{2} \cdot \frac{n^3 - 5.2n^2 + 8.43n - 4.69}{n^3 - 8n^2 + 20n - 16}\cdot \frac{n - 3.6}{n - 1}.\\
		\end{align*}
		So we only need to show that
		\begin{align*}
			&\frac{n^3 - 5.2n^2 + 8.43n - 4.69}{n^3 - 8n^2 + 20n - 16}\cdot \frac{n - 3.6}{n - 1} > \frac{n + 0.2}{n},\\
			\Leftrightarrow & n(n - 3.6)(n^3 - 5.2n^2 + 8.43n - 4.69) - (n - 1)(n + 0.2)(n^3 - 8n^2 + 20n - 16) > 0.
		\end{align*}
		Actually, we know
		\begin{align*}
			LHS &= n^5 - 8.8n^4 + 27.15n^3 - 35.038n^2 + 16.884n - (n^5 - 8.8n^4 + 26.2n^3 - 30.4n^2 + 8.8n + 3.2)\\
			&= 0.95n^3 - 4.638n^2 + 8.084n - 3.2\\
			&> 0.95\cdot 7^3 - 4.638\cdot 7^2 + 8.084\cdot7 - 3.2 > 0.\\
		\end{align*}
		\end{proof}

		\begin{proof}[Proof of claim \ref{claim4} ]
		When $n =7$, we have
		\begin{align*}
			LHS < 0.96 < RHS.
		\end{align*}
		When $n = 8$, we have
		\begin{align*}
			LHS < 0.9 < RHS.
		\end{align*}
		When $n \geq 9$, we have
		\begin{align*}
			LHS &< \frac{2\sqrt{2}\Big[ n - \frac{5}{2} - \frac{1}{8(n - 2)}\Big] - n + 4}{n - 2}\cdot \left( \frac{2 - \sqrt{2}}{2} + \frac{\sqrt{2}}{n} \right) \\
			&< \frac{2\sqrt{2}\Big[ n - \frac{5}{2} - \frac{1}{8n}\Big] - n + 4}{n - 2}\cdot \left( \frac{2 - \sqrt{2}}{2} + \frac{\sqrt{2}}{n} \right) \\
			&= \left[ (2\sqrt{2} - 1)n + 4 - 5\sqrt{2} - \frac{\sqrt{2}}{4n}\right]\cdot \frac{1}{n} \left(1 + \frac{2}{n} + \frac{4}{n^2}\cdot\frac{n}{n - 2}\right) \cdot \left( \frac{2 - \sqrt{2}}{2} + \frac{\sqrt{2}}{n} \right) \\
			&\leq \left( 2\sqrt{2} - 1 + \frac{ 4 - 5\sqrt{2}}{n} - \frac{\sqrt{2}}{4n^2}\right)\cdot  \left(1 + \frac{2}{n} + \frac{4}{n^2}\cdot\frac{9}{7}\right) \cdot \left( \frac{2 - \sqrt{2}}{2} + \frac{\sqrt{2}}{n} \right) \\
		\end{align*}
		\begin{align*}
			LHS &=  \Bigg[ 2\sqrt{2} - 1 + \frac{ 2 - \sqrt{2}}{n} + \frac{72\sqrt{2} - 36}{7n^2} + \frac{8 - 10\sqrt{2}}{n^2} - \frac{\sqrt{2}}{4n^2} + \frac{144 - 180\sqrt{2}}{7n^3} - \frac{\sqrt{2}}{2n^3} \\
			&- \frac{9\sqrt{2}}{7n^4}\Bigg] \cdot \left( \frac{2 - \sqrt{2}}{2} + \frac{\sqrt{2}}{n} \right)\\
			&< \left( 2\sqrt{2} - 1 + \frac{2 - \sqrt{2}}{n} + \frac{2.91}{n^2} - \frac{16.5}{n^3} \right) \cdot \left( \frac{2 - \sqrt{2}}{2} + \frac{\sqrt{2}}{n} \right) \\
			&= (2\sqrt{2} - 1)\frac{2 - \sqrt{2}}{2} + \frac{4 - \sqrt{2}}{n} + \frac{3 - 2\sqrt{2}}{n} + \frac{2\sqrt{2} - 2}{n^2} + \frac{2.91(2 - \sqrt{2})}{2n^2} + \frac{2.91\sqrt{2}}{n^3}\\
			&- \frac{16.5(2 - \sqrt{2})}{2n^3} - \frac{16.5\sqrt{2}}{n^4}\\
			&= (2\sqrt{2} - 1)\frac{2 - \sqrt{2}}{2} + \frac{7 - 3\sqrt{2}}{n} + \frac{0.91 + 0.545\sqrt{2}}{n^2} + \frac{11.16\sqrt{2} - 16.5}{n^3} - \frac{16.5\sqrt{2}}{n^4}\\
			&< (2\sqrt{2} - 1)\frac{2 - \sqrt{2}}{2} + \frac{7 - 3\sqrt{2}}{n} + \frac{0.91 + 0.545\sqrt{2}}{n^2} \\
			&< (2\sqrt{2} - 1)\frac{2 - \sqrt{2}}{2} + \frac{2.76}{n} + \frac{1.7}{n^2}.
		\end{align*}

		\end{proof}

		\begin{proof}[Proof of claim \ref{claim5} ]
		When $n =7$,
		\begin{align*}
			LHS > 0.05 > RHS.
		\end{align*}
		When $n = 8$,
		\begin{align*}
			LHS > 0.04 > RHS.
		\end{align*}
		When $n \geq 9$
		\begin{align*}
			LHS &>  \frac{1}{n - 2}\cdot \frac{ \frac{2 - \sqrt{2}}{2} n  - \frac{53}{21}  + \sqrt{2}}{n - \frac{5}{2} - \sqrt{2}}\cdot\frac{2\sqrt{2}\Big[ n - \frac{5}{2} - \frac{1}{6(n - 2)}\Big] - n + 4}{n - 2}\left( \frac{2 - \sqrt{2}}{2} + \frac{\sqrt{2}}{n} \right)\\
			&\geq  \frac{1}{n - 2}\cdot \frac{ \frac{2 - \sqrt{2}}{2} n  - \frac{53}{21}  + \sqrt{2}}{n - \frac{5}{2} - \sqrt{2}}\cdot\frac{2\sqrt{2}\Big[ n - \frac{5}{2} - \frac{1}{42}\Big] - n + 4}{n - 2}\left( \frac{2 - \sqrt{2}}{2} + \frac{\sqrt{2}}{n} \right)\\
			&> \frac{2 - \sqrt{2}}{2} (2\sqrt{2} - 1)\cdot\frac{2 - \sqrt{2}}{2} \cdot \frac{(n - 3.8)(n - 1.72)}{(n - 2)^2(n - 3.9)}\cdot  \frac{n + 4.8}{n} \\
			&= \frac{8\sqrt{2} - 11}{2}\cdot \frac{(n - 3.8)(n - 1.72)}{(n - 2)^2(n - 3.9)}\cdot  \frac{n + 4.8}{n} \\
		\end{align*}
		So we only need to check that
		\begin{align*}
			&\frac{(n - 3.8)(n - 1.72)(n + 4.8)}{(n - 2)^2(n - 3.9)} > \frac{n + 7}{n},\\
			\Leftrightarrow &n(n - 3.8)(n - 1.72)(n + 4.8) - (n + 7)(n - 2)^2(n - 3.9) > 0,\\
			\Leftarrow &n(n - 1.72)(n + 4.8) - (n + 7)(n - 2)^2 = 0.08n^2 + 15.744n - 28 > 0.\\
		\end{align*}

		\end{proof}

		\begin{proof}[Proof of claim \ref{claim6} ]
		\begin{align*}
				\frac{ n - 4 + \frac{4}{n}}{n - 4 + q} &= \left( 1 - \frac{4}{n} + \frac{4}{n^2} \right)\sum_{k = 0}^{\infty}\left( \frac{4 - q}{n}\right)^k\\
				&< \left( 1 - \frac{4}{n} + \frac{4}{n^2} \right)\left(1 + \frac{4 - q}{n} + \frac{(4 - q)^2}{n^2} \cdot \frac{1}{1 - \frac{4 - q}{n}} \right)\\
				&\leq \left( 1 - \frac{4}{n} + \frac{4}{n^2} \right)\left(1 + \frac{4 - q}{n} + \frac{(4 - q)^2}{n^2} \cdot \frac{1}{1 - \frac{4 - 1}{9}} \right)\\
				&= \left( 1 - \frac{4}{n} + \frac{4}{n^2} \right)\left(1 + \frac{4 - q}{n} + \frac{1.5(4 - q)^2}{n^2} \right)\\
				&= 1 - \frac{q}{n} + \frac{1.5(4 - q)^2 - 16 + 4q + 4}{n^2} + \frac{4 - q}{n^3}\left[-6(4 - q) + 4 + \frac{6(4 - q)}{n}\right]\\
				&<  1 - \frac{q}{n} + \frac{1.5q^2 - 8q + 12}{n^2}
			\end{align*}

		\end{proof}

	\begin{proof}[Proof of claim \ref{claim 9}]
			Recall that we have already known \eqref{n>7} :
				\begin{equation*}
			\begin{aligned}
				\Rightarrow &I\left(U_{0}, \frac{n + 2}{n - 2} - \frac{1}{n^2}\right) \\
			&\leq  -\frac{\sqrt{2}}{2} - \frac{\sqrt{2}}{n} + \frac{\sqrt{2}}{2n^2} + \frac{2 - q}{n - 3}\Bigg( \frac{\sqrt{2} - 1}{2} + \frac{3}{n}  \Bigg)  \\
			&- \frac{ n - 4 + \frac{4}{n}}{n - 4 + q}\Bigg[ 1 - q - 0.5\sqrt{2} + \frac{0.84 + 0.4\sqrt{2}}{n} - \frac{0.9}{n^2} + (2 - q)\frac{8\sqrt{2} - 11}{2}\left( \frac{1}{n} + \frac{7}{n^2}\right)   \Bigg].\\
			\end{aligned}
		\end{equation*}
			When $n = 7$ and $1 < q \leq \frac{9}{7}$ ,
		\begin{align*}
			&I\left(U_{0}, \frac{n + 2}{n - 2} - \frac{1}{n^2}\right) \\
			&< -0.89 + 0.16(2 - q) - \frac{25}{7(q + 3)}\Bigg[ -q + 0.47 + 0.044(2 - q) \Bigg]\\
			&<  -0.89 + 0.16(2 - q) - \frac{25}{28}\Bigg[ -q + 0.47 + 0.044(2 - q) \Bigg]\\
			&< 0.773 q - 1.068.
		\end{align*}
		and
		\begin{align*}
			&\Bigg[ -2.8 + 0.051(2 - q) + n(q - 1)   - 0.5\sqrt{2}q \Bigg] \frac{1}{n}\\
			&+ \Bigg[ 6 + 4.34(2 - q)  + nq(1 - q)+ 1.41q \Bigg] \frac{1}{n^2}\\
			& > -\frac{1}{7}q^2 + 0.974q - 1.086 > 0.773 q - 1.068.
		\end{align*}
		When $n = 8$ and $1 < q \leq \frac{5}{4}$,
		\begin{align*}
			&I\left(U_{0}, \frac{n + 2}{n - 2} - \frac{1}{n^2}\right) \\
			&< -0.87 + 0.12(2 - q) - \frac{4.5}{4 + q}\Bigg[ -q + 0.45 + 0.036(2 - q) \Bigg]\\
			&< -0.87 + 0.12(2 - q) - \frac{4.5}{5}\Bigg[ -q + 0.45 + 0.036(2 - q) \Bigg]\\
			&< 0.8124q - 1.0998,
		\end{align*}
		and
		\begin{align*}
			&\Bigg[ -2.8 + 0.051(2 - q) + n(q - 1)   - 0.5\sqrt{2}q \Bigg] \frac{1}{n}\\
			&+ \Bigg[ 6 + 4.34(2 - q)  + nq(1 - q)+ 1.41q \Bigg] \frac{1}{n^2}\\
			& > -\frac{1}{8}q^2 + 0.98q -1.108 > 0.8124q - 1.0998.
		\end{align*}
		\end{proof}

	\begin{proof}[Proof of claim \ref{claim7}]
	If $q > 1 + \frac{1.9}{n}$, then
	\begin{align*}
		p = \frac{q}{2 - q} > \frac{n + 1.9}{n - 1.9}.
	\end{align*}
	So we only need to check that
	\begin{align*}
		&\frac{n + 2}{n - 2} - \frac{1}{(n - 2)^2} > \frac{n + 1.9}{n - 1.9},\\
		\Leftrightarrow & \frac{n^2 - 5}{(n - 2)^2} - \frac{n + 1.9}{n - 1.9} > 0,\\
		\Leftrightarrow &(n^2 - 5)(n - 1.9) - (n + 1.9)(n - 2)^2 > 0,\\
		\Leftrightarrow &0.2n^2 - 1.4n + 1.9 > 0.
	\end{align*}
	The last inequality follows from $n \geq 7$.

	\end{proof}

	\begin{proof}[Proof of claim \ref{claim8} ]
		
	By claim \ref{claim3} we know
	\begin{align*}
		U_0 &= \frac{2B_0 (1 + \gamma S + qS)^2}{(\gamma + q)^2}\left( \frac{2 - \sqrt{2}}{2} + \frac{\sqrt{2}}{n} \right)\\
		&= \frac{2B_0(1 + \gamma S + qS)}{\gamma + q} \left(\frac{1}{\gamma + q} + S \right) \left( \frac{2 - \sqrt{2}}{2} + \frac{\sqrt{2}}{n} \right)\\
		&> \left(2\sqrt{2} + \frac{0.4\sqrt{2}}{n}\right)\frac{\sqrt{2}(n - \frac{38}{15}) - 1.5}{(n - 2)(n - 4)}\left( \frac{2 - \sqrt{2}}{2} + \frac{\sqrt{2}}{n} \right)\\
		&> \frac{2 - \sqrt{2}}{2} \left(4 + \frac{0.8}{n}\right)\frac{n - 3.6}{(n - 2)(n - 4)}\left( 1 + \frac{4.8}{n} \right)\\
		&=(4 - 2\sqrt{2}) \left(1 + \frac{0.2}{n}\right)\frac{n - 3.6}{(n - 2)(n - 4)}\left( 1 + \frac{4.8}{n} \right).\\
	\end{align*}
		We only need to show that
		\begin{align*}
			&\frac{(n + 0.2)(n - 3.6)(n + 4.8)}{n^2(n - 2)(n - 4)} > \frac{n + 7}{n^2},\\
			\Leftrightarrow &(n + 0.2)(n - 3.6)(n + 4.8) - (n + 7)(n - 2)(n - 4) > 0.
		\end{align*}
	\begin{align*}
		LHS &= 0.4n^2 + 16.96n - 59.456 > 0.
	\end{align*}
	\end{proof}

	\end{appendices}

\bibliography{241212MWZ}
\bibliographystyle{plain}

\end{document}